\documentclass[12pt]{article}
\title{Weakly o-minimal fields have the exchange property but not generic differentiability}
\author{Will Johnson}

\usepackage{amsmath, amssymb, amsthm}    	
\usepackage{fullpage} 	
\usepackage{amscd}
\usepackage{hyperref}
\usepackage[all]{xy}
\usepackage[T1]{fontenc}
\usepackage{lmodern}
\usepackage{centernot}
\usepackage[shortlabels]{enumitem}

\DeclareMathOperator*{\ind}{\raise0.2ex\hbox{\ooalign{\hidewidth$\vert$\hidewidth\cr\raise-0.9ex\hbox{$\smile$}}}}

\newcommand{\alg}{\mathrm{alg}}
\newcommand{\Th}{\operatorname{Th}}
\newcommand{\sgn}{\operatorname{sgn}}

\newcommand{\characteristic}{\operatorname{char}}

\newcommand{\acl}{\operatorname{acl}}
\newcommand{\dcl}{\operatorname{dcl}}

\newcommand{\im}{\operatorname{im}}

\newcommand{\trdeg}{\operatorname{tr.deg}}

\newtheorem{theorem}{Theorem}[section] 

\newtheorem{lemma}[theorem]{Lemma}
\newtheorem{hypothesis}[theorem]{Hypothesis}

\newtheorem{corollary}[theorem]{Corollary}
\newtheorem{fact}[theorem]{Fact}

\newtheorem{conjecture}[theorem]{Conjecture}

\newtheorem{question}[theorem]{Question}
\newtheorem{proposition}[theorem]{Proposition}
\newtheorem{proposition-eh}[theorem]{Proposition(?)}
\newtheorem*{theorem-star}{Theorem}
\newtheorem*{conjecture-star}{Conjecture}
\newtheorem*{lemma-star}{Lemma}
\newtheorem{claim}[theorem]{Claim}

\newtheorem{probably}[theorem]{Probable Theorem}

\theoremstyle{definition}
\newtheorem{definition}[theorem]{Definition}
\newtheorem{example}[theorem]{Example}

\newtheorem{remark}[theorem]{Remark}

\theoremstyle{remark}

\newtheorem*{acknowledgment}{Acknowledgments}

\newcommand{\Qq}{\mathbb{Q}}

\newcommand{\Rr}{\mathbb{R}}

\newcommand{\Zz}{\mathbb{Z}}

\newcommand{\Nn}{\mathbb{N}}
\newcommand{\Cc}{\mathbb{C}}

\newcommand{\Mm}{\mathbb{M}}

\newcommand{\Ll}{\mathcal{L}}

\newcommand{\pp}{\mathfrak{p}}

\newcommand{\ba}{{\bar{a}}}
\newcommand{\bb}{{\bar{b}}}
\newcommand{\bc}{{\bar{c}}}

\newcommand{\bx}{{\bar{x}}}
\newcommand{\by}{{\bar{y}}}

\newenvironment{claimproof}[1][\proofname]
               {
                 \proof[#1]
                 
               }
               {
                 \endproof
               }

\let\phi\varphi
\begin{document}

\maketitle

\begin{abstract}
  We answer two open questions about weakly o-minimal fields posed by Macpherson, Marker, and Steinhorn: whether weakly o-minimal fields have the exchange property and whether they have generic differentiability.  We construct an ordered field $(K,+,\cdot,\le)$ and a function $f : K \to
  K$ such that the expansion $(K,+,\cdot,\le,f)$ has a weakly o-minimal complete theory but $f$ is nowhere
  differentiable.  In an appendix, we prove that algebraic closure has the exchange property in any weakly o-minimal theory of ordered
  fields.
\end{abstract}

\section{Introduction} \label{sec1}
Recall that an ordered structure $(M,\le,\ldots)$ is \emph{weakly
o-minimal} if every unary definable set $D \subseteq M^1$ is a finite
union of convex sets \cite{dickmann}.  A motivating example of a weakly o-minimal theory is the theory RCVF of real closed fields with non-trivial convex valuation rings, equivalent to the ``real closed rings'' of \cite{dickmann-cherlin}.  More generally, Baisalov and Poizat showed that any expansion of an o-minimal structure $M$ by convex unary sets $X \subseteq M$ is weakly o-minimal \cite{baisalov-poizat}.

Macpherson, Marker, and Steinhorn carefully analyzed weakly o-minimal
ordered fields, proving that they are real closed \cite{weakOmin}.  At
the end of their paper, the authors ask a number of questions,
which have remained open for the past 25 years, including the
following two \cite[\S7.3]{weakOmin}:
\begin{quotation}
  ``\textbf{Problem 3}.  Investigate differentiability and analyticity
  properties of definable functions for weakly o-minimal expansions of
  ordered fields.

  \textbf{Problem 4}.  Is there a weakly o-minimal expansion of an ordered field in which algebraic closure does not have the exchange property?''
\end{quotation}
In Appendix~\ref{app}, we will essentially answer Problem 4, showing that weakly o-minimal theories of ordered fields have the exchange property.  The rest of the paper focuses on Problem 3.

As a starting point for Problem 3, one can ask whether weakly
o-minimal fields have \emph{generic differentiability}:
\begin{question} \label{q1}
  Let $(M,+,\cdot,\le,\ldots)$ be a weakly o-minimal expansion of an
  ordered field.  Let $U \subseteq M^n$ be a definable open set.  Let
  $f : U \to M^m$ be a definable function.  Is there a dense open
  subset $U_0 \subseteq U$ such that $f$ is differentiable on $U_0$?
\end{question}
For \emph{o-minimal} fields, generic differentiability holds
\cite[Chapter~7]{lou-o-minimality}.

In the intervening years, the class of \emph{dp-minimal
theories}~\cite{dpExamples} has been isolated as a common
generalization of strong minimality, o-minimality, weak o-minimality,
and many of their variants such as C-minimality~\cite{c-source} and
P-minimality~\cite{p-min}.  In~\cite{dpm1}, it was shown that any
dp-minimal expansion of a field $(K,+,\cdot,\ldots)$ admits a unique
definable field topology, unless $K$ is strongly minimal.\footnote{For
weakly o-minimal fields, this canonical topology is the order
topology, and for C-minimal and P-minimal fields, it's the valuation
topology.}  Moreover, the boundary of any unary definable set $D
\subseteq K$ is finite~\cite[Theorem~1.3]{dpm1}, a property called
``topological minimality'' by Mathews~\cite{mathews} and
``viscerality'' by Dolich and Goodrick~\cite{visceral}.  Combined with
results of Simon and Walsberg~\cite{simonWalsberg}, it follows that
dp-minimal fields have many ``tame topology'' properties similar to
o-minimal fields.  In particular, definable functions are generically
continuous.  What about generic differentiability?
\begin{question} \label{q2}
  Let $(K,+,\cdot,\ldots)$ be an expansion of a field with
  $\characteristic(K)=0$.  Suppose $K$ is dp-minimal but not strongly
  minimal, so that the canonical topology exists.  Let $U \subseteq
  K^n$ be a definable open set and $f : U \to K^m$ be a definable
  function.  Is there a dense open definable set $U_0 \subseteq U$
  such that $f$ is differentiable on $U_0$?
\end{question}
\begin{remark}
  The assumption $\characteristic(K)=0$ is necessary because the
  function $f(x) = \sqrt[p]{x}$ is nowhere differentiable in any
  dp-minimal field of characteristic $p$.  One such field is the
  C-minimal theory ACVF$_{p,p}$ of algebraically closed valued fields
  of characteristic $p$.
\end{remark}
Question~\ref{q2} has remained open until now, and positive answers
are known in many cases:
\begin{itemize}
\item O-minimal fields~\cite[Chapter~7]{lou-o-minimality}.
\item P-minimal fields \cite{leenknegt,own-P-min}.
\item Definably Cauchy complete C-minimal fields of characteristic 0
  \cite[Theorem~9.5]{own-C-min}.
\item Definably Cauchy complete weakly o-minimal fields, by
  forthcoming work of Acosta, Halevi, Hasson, and Peterzil.
\end{itemize}
Why do Questions~\ref{q1} and \ref{q2} matter?  One reason is that
generic differentiability has applications to definable groups and
fields.  For example, Peterzil, Pillay, and Starchenko used generic
differentiability in o-minimal fields to classify definably simple
groups in o-minimal structures~\cite{deflysimple}, going far beyond
what is known in the analogous situation of strongly minimal
structures.  Similarly, Halevi, Hasson, and Peterzil used generic
differentiability in P-minimal theories to classify interpretable
fields in P-minimal theories (see \cite[Theorem~1(3)]{hhp} and
\cite[Corollary~1.2]{own-P-min}).  The point is that generic
differentiability lets one put a definable $C^1$-manifold structure on
any definable group, enabling tools like the adjoint representation
and Lie theory.

Unfortunately, we will answer Questions~\ref{q1} and \ref{q2}
negatively in this paper:
\begin{theorem}[{= Theorem~\ref{ending}}]
  There is a structure $(M,+,\cdot,\le,f)$ where $(M,+,\cdot,\le)$ is
  a real closed field and $f : M_{>0} \to M_{>0}$ is a unary function,
  such that the following things hold:
  \begin{itemize}
  \item The structure $(K,+,\cdot,\le,f)$ is weakly o-minimal, and its
    complete theory is weakly o-minimal.  In particular,
    $(K,+,\cdot,\le,f)$ is dp-minimal.
  \item The function $f$ is nowhere differentiable.
  \end{itemize}
\end{theorem}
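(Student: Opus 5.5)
The plan is to build $M$ as a real closed field carrying a nontrivial convex valuation, with $f$ defined as a ``valuation-theoretic'' perturbation of an o-minimal function. The perturbation should be invisible to the order at the level of definable unary sets, but it should destroy differentiability at every point.

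Concretely, take a real closed valued field $(M,\mathcal{O})$ with residue field $k$ and value group $\Gamma$. Define $f$ so that $f(x)$ is close to $x$ in the valuative sense, $f(x) - x \in x\,\mathfrak{m}$, but the difference is chosen to oscillate at every scale. The natural choice is to use a section or a lift of some additive, non-continuous-looking structure on residue fields or value groups. For instance, use a section $s:\Gamma\to M_{>0}$ and a residue map, and set
\[
f(x) = x\cdot(1+\epsilon(x)),
\]
where $\epsilon(x)$ is determined by the position of $x$ within its ``annulus''. The requirement is that, for every point $a$ and every infinitesimal scale, the difference quotients $(f(a+h)-f(a))/h$ take values spread across a whole interval. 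That spread is exactly what rules out a limit. I would aim for a definition where $f$ respects the valuation, so that $v(f(x)-f(y)) = v(x-y)$, while the leading coefficient of $f(x)-f(y)$ varies nonlinearly with the residue of $(x-y)/\text{scale}$. This gives nowhere differentiability by a direct computation: for each $a$, pick $h_1,h_2$ of the same small valuation whose difference quotients differ in residue. This step should be routine once the construction is fixed.

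The main work, and the main obstacle, is proving weak o-minimality of the complete theory of $(M,+,\cdot,\le,f)$. I would proceed by quantifier elimination or relative QE. Expand the language by the valuation ring, residue field sorts, and whatever auxiliary structure $f$ is built from. Then prove that the expanded theory eliminates quantifiers relative to the residue field and value group, via a back-and-forth between substructures closed under $f$ and its inverse. Given QE, unary definable sets are boolean combinations of sets of the form $\{x : \text{a condition on } v(g(x)-c) \text{ and } \operatorname{res}(\ldots)\}$, where $g$ is a term. The o-minimality of the residue field (real closed), together with the ordered divisible abelian value group, should show these are finite unions of convex sets. This is the same mechanism that makes RCVF weakly o-minimal.

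The hardest part is the embedding lemma. When extending an $f$-closed substructure by a new element $b$, one must add the whole $f$-orbit of $b$ and check that its type is determined. I would try first to arrange that $f$ acts trivially enough on valuative data that the $f$-closure of $b$ is generated by $b$ together with finitely many residue and value-group data. That would let the standard RCVF extension arguments, immediate versus residual versus ramified extensions, go through. The final step is to pass from QE to weak o-minimality of every model, which gives weak o-minimality of the complete theory.
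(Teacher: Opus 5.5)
\textbf{There is a genuine gap.} The problem is your mechanism for non-differentiability, so the weak o-minimality step is not merely unfinished: it must fail for any construction you fix.

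Your criterion is: for every $a$ and every small scale there are $h_1,h_2$ of that scale with $v\bigl(m(a,a+h_1)-m(a,a+h_2)\bigr)=0$, where $m(x,y)=\frac{f(x)-f(y)}{x-y}$. Since the valuation is nontrivial, such a difference exceeds every positive element of $\mathfrak{m}$ at arbitrarily small scales. So $f$ would be nowhere \emph{Cauchy} differentiable. The Acosta--Halevi--Hasson--Peterzil theorem quoted in Section~\ref{conclusion} (Theorem~\ref{ahhp}) says that in an expansion of an ordered field with weakly o-minimal theory, every definable function is Cauchy differentiable on a dense open set. Hence no structure of the kind you describe has a weakly o-minimal theory. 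By the same theorem, any such structure whose definable cuts are all valuational, as in RCVF, has generic differentiability, so a purely valuational design cannot succeed.

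You can also see the failure concretely. The auxiliary data you want in the language, such as a section $s:\Gamma\to M_{>0}$, already has an infinite image containing no interval: between $s(\gamma)<s(\gamma')$ lies $2s(\gamma)\notin s(\Gamma)$. So QE in that language cannot yield weak o-minimality. Similarly, an isometry with nonlinear scale-by-scale behaviour, such as $\sum a_\gamma t^\gamma\mapsto\sum a_\gamma^3 t^\gamma$ on a Hahn field, defines its fixed-point set $\{x : \text{all } a_\gamma\in\{-1,0,1\}\}$. That set is infinite and contains no interval.

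\textbf{The missing idea.} Difference quotients generically converge in the completion. So nowhere differentiability can only arise because the limits lie outside $M$. You need a dense subfield $M$ of an o-minimal field $K$ that is closed under $f$ but omits every value $f'(a)$, so that these values fill definable \emph{nonvaluational} cuts of $M$.

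The paper does this as follows:
\begin{itemize}
\item It takes $K=\dcl(\varnothing)$ in $\Rr_\alpha$ with $\alpha$ exponentially transcendental, $f(x)=x^\alpha$, and $M=\ker\delta$ for the twisted derivation of Theorem~\ref{the-key}.
\item Since $\delta\alpha=1$, we get $\alpha\notin M$, hence $f'(x)=\alpha x^{\alpha-1}\notin M$ for every $x\in M_{>0}$.
\item The definability of $\delta$ (Theorem~\ref{definability}) makes projections of sets $D\cap M^{n+1}$ again of the form $D'\cap M^n$ (Lemma~\ref{qe-for-induced}). This yields QE for the induced structure and weak o-minimality of its theory (Corollary~\ref{induced-cor}).
\item The hard step is the existence of $\delta$. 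This is where exponential transcendence of $\alpha$ and the Bays--Kirby--Wilkie Schanuel property are used.
\end{itemize}
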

We'll discuss what this means for definable groups and fields in
concluding Section~\ref{conclusion}.
\subsection*{Outline of the construction}
The idea is simple.  Let $\alpha$ be a positive real number which is
exponentially transcendental, i.e., \emph{not} definable in the
structure $(\Rr,+,\cdot,\exp)$.  We will construct a real closed
subfield $M \subseteq \Rr$ with the following properties:
\begin{enumerate}
\item The set $M_{>0}$ of positive elements of $M$ is closed under the
  function $f(x) = x^\alpha$.
\item The expansion $(M,+,\cdot,f)$ is weakly o-minimal.
\item The number $\alpha$ is \emph{not} in $M$.
\end{enumerate}
The derivative $f'(x)$ exists in $\Rr$, and is merely
\begin{equation*}
  f'(x) = \alpha x^{\alpha-1} = \alpha f(x) / x.
\end{equation*}
But note that $f'(a) \notin M$ for any $a \in M_{>0}$, by points (1) and (3).  \emph{Within} the structure $M$, the function $f$ is
nowhere differentiable.

The difficulty is producing a field $M$ with properties (1)--(3).
Already the combination (1) plus (3) is slightly tricky to satisfy.  For
example, if $5^{\alpha}-3^{\alpha}$ happened by coincidence to equal
$\alpha$, then (1) would imply $\alpha \in M$, contradicting (3).  Of
course, this coincidence is ruled out by choosing $\alpha$ to be
exponentially transcendental.

When one adds condition (2) (weak o-minimality), things become
significantly harder.  For example, weak o-minimality forces $M$ to be
closed under not only $f$ but also $f^{-1}(x) = \sqrt[\alpha]{x}$.  Similarly, we need more complicated functions like $(x+1)^\alpha - x^\alpha$ to be invertible at most points.
In general, it seems difficult to satisfy (1) and (2) unless $M$ is closed
under any function ``implicitly defined'' using $x^\alpha$.  For example, if the system of equations
\begin{gather*}
  x^\alpha + y^\alpha = 5 \\
  x - y = 1
\end{gather*}
has a regular\footnote{Here, a ``regular solution'' means that the matrix of partial derivatives $\begin{pmatrix} \alpha x^{\alpha-1} & \alpha y^{\alpha-1} \\ 1 & -1
\end{pmatrix}$ is invertible at $(x,y)=(a,b)$.} solution $(x,y) = (a,b)$ then we probably want $a$ and $b$
to be in the field $M$.

Initially, I tried to take $M$ to be the field containing all
coordinates of regular solutions of system of equations of the form
\begin{gather*}
  P_1(x_1,\ldots,x_n) = 0 \\
  P_2(x_1,\ldots,x_n) = 0 \\
  \vdots \\
  P_n(x_1,\ldots,x_n) = 0,
\end{gather*}
where each $P_1,\ldots,P_n$ is a $\Qq$-polynomial in
$x_1,\ldots,x_n,x_1^\alpha,\ldots,x_n^\alpha$.  Using work of Bays, Kirby, and Wilkie~\cite{bkw}, one can prove that this
field doesn't contain $\alpha$ (see Proposition~\ref{lockdown}).
Unfortunately, I ran into too many obstacles when trying to prove weak
o-minimality of this field, and so we will use a slightly different
strategy.

Let $K$ be the prime model of the o-minimal structure
$(\Rr,+,\cdot,\le,x^\alpha)$, i.e., $\dcl(\varnothing)$ in this
structure.  By combining results of Miller, Jones, and Wilkie
\cite{miller,jones-wilkie}, one can show (Proposition~\ref{jones-wilkie-repeat}) that the positive elements of
$K$ are precisely the elements which occur as coordinates of regular
solutions of systems of equations
\begin{gather*}
  P_1(x_1,\ldots,x_n) = 0 \\
  P_2(x_1,\ldots,x_n) = 0 \\
  \vdots \\
  P_n(x_1,\ldots,x_n) = 0,
\end{gather*}
where the $P_i$ are polynomials in
$x_1,\ldots,x_n,x_1^\alpha,\ldots,x_n^\alpha$ with coefficients in
\underline{$\Qq[\alpha]$} rather than $\Qq$ as before.  For example,
$\alpha$ itself belongs to $K$, since it's a regular solution of the
equation $x-\alpha=0$.  We need a way to separate the ``bad'' elements
of $K$, like $\alpha$, from the ``good'' elements, like those defined
by systems of equations with $\Qq$-coefficients.

Given a system of equations like
\begin{align}
  x^\alpha - y^\alpha &= 0 \label{eq1} \\
  x+y &= 8 \label{eq2} \\
  z - \alpha x &= 0, \label{eq3}
\end{align}
and a regular solution $(x,y,z)=(a,b,c)$, the trick is to replace $\alpha$ with a moving parameter $t$ whenever it occurs as a coefficient (rather than an exponent)
\begin{align*}
  x^\alpha - y^\alpha &= 0 \\
  x+y &= 8 \\
  z - tx &= 0
\end{align*}
and then look at how the solution $(a(t),b(t),c(t))$ varies as a
function of $t$.  If the derivative $a'(t)$ vanishes at $t=\alpha$,
then $a$ is ``good''.  Otherwise, $a$ is ``bad''.\footnote{In the
example just given, $a(t) = a$ and $b(t) = b$ and $c(t) = at$, so
$a'(t)=0$, $b'(t)=0$, and $c'(t)=a$.  Thus $a,b$ are ``good'' and $c$
is ``bad''.}  The surprising thing is that the map sending $a$ to its
``derivative'' $a'(t)$ is well-defined, giving the following key result:
\begin{theorem}[{= Theorem~\ref{the-key}}] \label{the-key-0}
  There is a unique derivation $\delta : K \to K$ satisfying the
  constraints
  \begin{align*}
    \delta \alpha &= 1 \tag{$\ast$} \\
    \delta x^\alpha &= \alpha x^{\alpha - 1} \delta x \text{ for } x > 0. \tag{$\dag$}
  \end{align*}
\end{theorem}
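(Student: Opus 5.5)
Uniqueness will come from the description of $K$ quoted above (Proposition~\ref{jones-wilkie-repeat}), and existence from realizing $\delta$ as a derivative with respect to $\alpha$ in a family of structures. For uniqueness, let $a\in K_{>0}$ be a coordinate of a regular solution $\bar a$ of a system $P_1=\cdots=P_n=0$, where the $P_i$ are polynomials over $\Qq[\alpha]$ in $x_i$ and $x_i^\alpha$. Applying any derivation $\delta$ satisfying $(\ast)$ and $(\dag)$ to $P_i(\bar a,\bar a^\alpha)=0$ gives a linear system
\[
J(\bar a)\,\delta\bar a = -(\partial_t P)(\bar a),
\]
where $J$ is the Jacobian matrix from the footnote on regular solutions, and $\partial_t P$ means differentiating the coefficients in $\Qq[\alpha]$ with respect to $\alpha$. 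Regularity makes $J(\bar a)$ invertible, so $\delta\bar a$ is forced. Negative elements and $0$ are then handled by $\delta(-a)=-\delta a$.

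For existence I would parametrize $\alpha$. Consider the o-minimal structure $(\Rr,+,\cdot,\le,(t,x)\mapsto x^t)$, for instance inside $\Rr_{\exp}$. For each real $t$ near $\alpha$, the same system with $\alpha$ replaced by $t$, both as coefficient and as exponent, has by the implicit function theorem a unique nearby regular solution $\bar a(t)$, depending smoothly on $t$. I would then define
\[
\delta a := \frac{d}{dt}\Big|_{t=\alpha}\, a(t).
\]
Implicit differentiation shows that this derivative is exactly the forced value above. That value is expressible by rational functions in $\bar a$, $\bar a^{\alpha}$, $\alpha$ and $\alpha$-powers, so it lies in $K$.

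The main obstacle is well-definedness: the same $a$ may arise from different systems, and we need the resulting $t$-derivatives to agree. I would argue this through a field-level description. Let $\sigma_t$ be the map on $K$ sending $a=g(\alpha)$, where $g$ is a $\varnothing$-definable function in the parametrized structure, to $g(t)$. Concretely, $a\in\dcl(\alpha)$ in the expansion with $x^t$ as a two-variable function, so $a=g(\alpha)$ for a $\varnothing$-definable, piecewise smooth $g$. Take $\delta a := g'(\alpha)$. To see that this does not depend on $g$, note that if $g_1(\alpha)=g_2(\alpha)$, then $\alpha$ lies in the $\varnothing$-definable set $\{t : g_1(t)=g_2(t)\}$. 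If that set were finite, $\alpha$ would be $\varnothing$-definable in $\Rr_{\exp}$, contradicting exponential transcendence. Hence $\alpha$ lies in the interior of an interval on which $g_1=g_2$, so $g_1'(\alpha)=g_2'(\alpha)$. Similarly, $\alpha$ avoids the finitely many $\varnothing$-definable nondifferentiability points of $g$, so the derivative exists.

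With $\delta$ defined this way, additivity and the Leibniz rule follow from calculus, using that definable functions combine compatibly. $(\ast)$ holds with $g(t)=t$. For $(\dag)$, write $x=g(\alpha)$ and $x^\alpha=h(\alpha)$. The relation $h(t)=g(t)^{t}$ holds at $t=\alpha$, hence on a neighborhood of $\alpha$ by the same genericity argument. Differentiating gives
\[
\delta x^\alpha = \alpha x^{\alpha-1}\delta x + x^\alpha\log x.
\]
This has an extra $\log x$ term, which is the real subtlety.

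So the actual plan is to change the definition and freeze the exponent. For each $\varnothing$-definable $g(s)$ in the structure $(\Rr,+,\cdot,\le,x^s)$ built over the exponent, define $\delta a$ as the partial derivative with respect to $\alpha$ appearing only as a coefficient, that is, as a constant of $K$ while $x\mapsto x^\alpha$ stays fixed. Formally, I would take $a=G(\alpha,\alpha)$, where $G(c,s)$ is $\varnothing$-definable in $(\Rr,x^s)$, and set $\delta a:=\partial_cG(\alpha,\alpha)$. Well-definedness then needs $\alpha$, viewed as a coefficient, to be generic over the structure in which $\alpha$ is the exponent. That is, $\alpha\notin\dcl(\varnothing)$ in $(\Rr,+,\cdot,\le,x^\alpha)$ restricted to parameter-free definitions over $\Qq$, and this is precisely what Proposition~\ref{lockdown} (via Bays--Kirby--Wilkie) supplies. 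I expect this step to be the crux. Once genericity is established, the interval argument above gives independence of the representation, and $(\dag)$ follows from the chain rule with $s$ frozen.
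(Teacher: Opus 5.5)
Your uniqueness argument is correct and is the paper's: every positive element of $K$ is a coordinate of a regular solution (Corollary~\ref{cor1}), and applying a derivation satisfying $(\ast)$ and $(\dag)$ to the system determines $\delta\bar a$ because the Jacobian is invertible. The existence part has a genuine gap, at the step you yourself called the crux. The genericity you need is false. The number $\alpha$ \emph{is} $\varnothing$-definable in $(\Rr,+,\cdot,\le,x^\alpha)$, because $\alpha=\lim_{h\to 0}((1+h)^\alpha-1)/h$ is the derivative of $x^\alpha$ at $1$. This is also why the paper works with $R_n[\alpha]$ rather than $R_n$ as the ring closed under partial derivatives, and why $\alpha$ appears among the coefficients in Proposition~\ref{jones-wilkie-repeat}.

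As a result, your recipe ``$\delta a:=G'(\alpha)$ for $a=G(\alpha)$'' is not well defined. Both $G_1(c)=c$ and the constant function $G_2(c)=\lim_{h\to0}((1+h)^\alpha-1)/h$ represent $\alpha$, yet $G_1'(\alpha)=1\neq 0=G_2'(\alpha)$. The same example ($G_1(c,s)=c$, $G_2(c,s)=s$) breaks the two-variable version $\partial_cG(\alpha,\alpha)$. The interval argument cannot be rescued: $\{\alpha\}$ is itself a $\varnothing$-definable set, so there is no o-minimal structure containing $x^\alpha$ over which $\alpha$ is generic. More fundamentally, the twisted $\delta$ is not compatible with arbitrary definable functions. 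The constant function $\alpha$ has $\delta$-value $1$ (Remark~\ref{clear-derivatives}), whereas any scheme that differentiates arbitrary definable representations would force $\delta\alpha=0$. Proposition~\ref{lockdown} does not say what you attribute to it. It is a transcendence statement about regular solutions of systems with $\Qq$-coefficients, not a statement about $\dcl$.

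The missing idea is to restrict to a narrow class of representations and verify consistency algebraically. The paper allows only anchors. These are regular solutions $\bar b$ of square systems $f_i(\alpha,\bar x)=0$ with $f_i\in R_{n+1}$, so $w$ enters only polynomially and $\alpha$ otherwise appears only as an exponent. The derivative of an anchor is defined by implicit differentiation in $w$ alone. To show that two anchors containing the same element assign it the same derivative, the paper proceeds in three steps:
\begin{enumerate}
\item Concatenate the two anchors into one (Remark~\ref{concatenanchor}).
\item Use the quotient construction (Lemma~\ref{division}) to produce a coordinate equal to $1$.
\item Show that such a coordinate has derivative $0$ (Lemma~\ref{1-to-0}).
\end{enumerate}
Step 3 is where Bays--Kirby--Wilkie is used. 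If the derivative were nonzero, appending the equation $x_1-1=0$ would make $(\alpha,\bar b)$ a regular solution of a square $R_{n+1}$-system. Proposition~\ref{lockdown} would then give $\alpha\notin\Qq(\alpha,\bar b)^{\alg}$, which is absurd. Finally, the derivation rules and $(\dag)$ come from Lemmas~\ref{division}, \ref{add} and \ref{power} applied to anchors, not from a chain rule for definable functions.
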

For instance, the reader may wish to check that---assuming
Theorem~\ref{the-key-0}---any regular solution $(a,b,c)$ of
Equations~\ref{eq1}--\ref{eq3} satisfies $\delta a = 0$, $\delta b =
0$, and $\delta c = a$.  Our weakly o-minimal field $M$ will be the
``good'' elements with $\delta x = 0$, rather than the ``bad''
elements with $\delta x \ne 0$.
\begin{remark}
  There is something off about the conditions on the derivation
  $\delta$ in Theorem~\ref{the-key-0}.  Normally, the derivative of
  $x^y$ is $yx^{y-1}x' + (\log x)x^y y'$, which only equals
  $yx^{y-1}x'$ if $y'=0$ or $x=1$.  So the identity $\delta x^\alpha =
  \alpha x^{\alpha-1} \delta x$ suggests that $\delta \alpha = 0$, when actually $\delta \alpha = 1$ instead.

  The point is that $\delta$ treats $\alpha$ as a constant when it
  appears as an exponent (in expressions like $x^\alpha$), but as a
  moving parameter when $\alpha$ appears as a coefficient (in
  expressions like $\alpha x$).  Since our goal is to separate the map
  $x^\alpha$ from the number $\alpha$, this is exactly the behavior we
  want.
\end{remark}
We will call $\delta$ the \emph{twisted derivation} because of this
strange behavior.  We will prove the following definability theorem
for $\delta$:
\begin{theorem}[{= Theorem~\ref{definability}}] \label{definability-0}
  Let $f : K^n \to K$ be definable in the structure
  $(K,+,\cdot,x^\alpha)$.  Then there is another definable function $g
  : K^{2n} \to K$ such that
  \begin{equation*}
    \delta f(a_1,\ldots,a_n) = g(a_1,\ldots,a_n,\delta a_1, \ldots, \delta a_n) \text{ for any } a_1,\ldots,a_n \in K.
  \end{equation*}
\end{theorem}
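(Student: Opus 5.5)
The plan is to reduce to the implicit-function description of definable functions in the o-minimal structure $(K,+,\cdot,\le,x^\alpha)$. Write $\Ll$ for the language with $+,\cdot,\le,x^\alpha$ and constants for elements of $K$. Since $K=\dcl(\varnothing)$, every definable $f$ is $\varnothing$-definable. By o-minimality and cell decomposition, it suffices to work piecewise: $K^n$ splits into finitely many $\varnothing$-definable cells, and $g$ can be defined cell by cell. It is also enough to prove the statement in an elementary extension $K^* \succeq K$. So the first step is to extend $\delta$ to a twisted derivation on $K^* = \dcl(\bar a)$ for tuples $\bar a$ with arbitrary prescribed values $\delta a_i$. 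I expect this extension to be proved like Theorem~\ref{the-key}, by adding the $a_i$ as parameters. Once it is available, the problem becomes a uniform description of $\delta b$ for $b\in\dcl(\bar a)$.

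The core step uses the Wilkie/Jones--Wilkie description already behind Proposition~\ref{jones-wilkie-repeat}, relativized to parameters. Any $b=f(\bar a)$ is a coordinate of a regular solution $(x_1,\ldots,x_m)$ of a system $P_j(\bar a,\bar x,\bar a^\alpha,\bar x^\alpha)=0$, $j=1,\ldots,m$. Here the $P_j$ are polynomials with coefficients in $\Qq[\alpha]$, and we may take each coordinate of $\bar a$ to be positive, after translating or handling signs, so that $x^\alpha$ makes sense. By compactness and definable choice, the same finitely many system shapes work uniformly on each cell. Apply $\delta$ to each equation, using $(\ast)$, $(\dag)$ and the Leibniz rule. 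This gives a linear system
\begin{equation*}
J\cdot \delta\bar x = -\Big(\partial_t P + \textstyle\sum_i (\partial_{a_i}P + \alpha a_i^{\alpha-1}\partial_{a_i^\alpha}P)\,\delta a_i\Big),
\end{equation*}
where $J$ is the invertible Jacobian (with the $x_k^{\alpha}$-derivatives contributing $\alpha x_k^{\alpha-1}$) and $\partial_t$ differentiates the coefficient $\alpha$ as a variable. By Cramer's rule, $\delta\bar x$ is a rational function of $\bar a$, $\bar x$, $\bar a^\alpha$, $\bar x^\alpha$, $\alpha$ and $\delta\bar a$. Each of these quantities is definable from $\bar a$ and $\delta\bar a$, since $\bar x$ is a definable function of $\bar a$ on the cell. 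This yields $g$ on that cell, and gluing the finitely many cells gives $g$ on $K^{2n}$.

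I expect the main obstacle to be uniformity plus well-definedness over parameters. The derivation on $\dcl(\bar a)$ must extend $\delta$ on $K$ with the $\delta a_i$ free, or else we must argue that the formula obtained does not depend on the chosen system. Since Theorem~\ref{the-key} gives uniqueness, the computed value is forced once existence holds, so the independence from the system comes for free. The real work is the existence of the extension for $\bar a$ generic over $K$, which I would attempt by reproving Theorem~\ref{the-key} with $\Qq[\alpha]$ replaced by $\Qq[\alpha,\bar a,\bar a^\alpha]$. For non-generic $\bar a$ lying in a lower-dimensional cell, I would reparametrize the cell by a generic tuple of smaller length via a definable homeomorphism and reduce to that case.
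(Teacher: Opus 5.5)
Your second paragraph is in substance the paper's proof. Corollary~\ref{cor2} supplies $0$-definable open pieces $U_i$ of an open cell on which $f$ is implicitly defined, with $0$-definable auxiliary functions $f_2,\dots,f_m$. Lemma~\ref{implicit-case} then applies $\delta$ to $g_i(\alpha,\bar a,f(\bar a),f_2(\bar a),\dots,f_m(\bar a))=0$ and inverts the Jacobian, which is exactly your Cramer's-rule step.

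The gap is in the framework around it. You make the argument depend on extending $\delta$ to $\dcl(\bar a)$ in an elementary extension with the values $\delta a_i$ free, and you call this ``the real work''. You do not carry it out, and it is not routine. Well-definedness of $\delta$ in Section~\ref{delta-core} goes through Lemma~\ref{1-to-0}, hence through Proposition~\ref{lockdown} and the Bays--Kirby--Wilkie property Fact~\ref{f1}. That is a statement about real numbers, and nothing in the paper gives the analogue over nonstandard parameters $\bar a$.

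Even granting such a $\delta^*$, an identity $\delta^* f(\bar x)=g(\bar x,\delta^*\bar x)$ proved at generic points of $K^*$ cannot be pulled back to points of $K$ by compactness, because $\delta$ is not in the language. Moreover, $K=\dcl(\varnothing)$, so every point of $K^n$ is $\varnothing$-definable. None of the points the theorem is about is ever ``generic'', so your generic/non-generic split of points never reaches them.

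The fix is to drop the detour entirely. The theorem concerns only $\bar a\in K$, where $\delta$ already exists by Theorem~\ref{the-key} and obeys ($\ast$), ($\dag$) and the chain rule of Remark~\ref{clear-derivatives}. So your linear system holds at each $\bar a\in U_i(K)$. Its solution is automatically independent of the chosen system, and the Cramer's-rule expression is a $0$-definable $g$ because $U_i$ and the $f_j$ are $0$-definable. Compactness is needed only to prove the $\delta$-free uniform statement, Corollary~\ref{cor2}.

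Genericity should be handled at the level of definable sets, not points. First split $D$ by the sign of $f$, so that $f(\bar a)>0$ and Corollary~\ref{cor2} applies. Then split by the sign pattern of the coordinates, so that open cells lie in $\Rr_{>0}^n$. Finally induct on $\dim D+n$, treating non-open cells and the lower-dimensional remainder $D\setminus\bigcup U_i$ by the projection trick of Remark~\ref{cell-project}.
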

\begin{example}
  Suppose $f(x,y)$ is the function $\alpha x + y^\alpha$.  Using the
  defining properties of $\delta$ in Theorem~\ref{the-key-0}, one sees that
  \begin{equation*}
    \delta f(x,y) = \alpha \delta x + x + \alpha y^{\alpha-1} \delta y,
  \end{equation*}
  so if we take $g(x,y,x',y') = \alpha x' + x + \alpha y^{\alpha-1} y'$, then
  \begin{equation*}
    \delta f(x,y) = g(x,y,\delta x, \delta y).
  \end{equation*}
\end{example}
Assuming Theorems~\ref{the-key-0} and \ref{definability-0}, the weak
o-minimality of $(M,+,\cdot,x^\alpha)$ follows by a very easy proof
given in Section~\ref{easy-part}, which you could go read right now if
you wanted.  All the difficulty goes into proving
Theorems~\ref{the-key-0} and \ref{definability-0}, and so we now
sketch their proofs.

First, we need a recipe for calculating $\delta$.  This was discused
earlier: the results of Miller, Jones, and Wilkie show that every
positive element of $K$ can be implicitly defined by a system of
equations.  One formally applies $\delta$ to these equations, and
solves for $\delta x_1, \delta x_2, \ldots, \delta x_n$.
\begin{example}
  Suppose $x,y \in K$ satisfy the equations
  \begin{align*}
    \alpha x^\alpha + y &= 3 \\
    x - y^\alpha &= 1.
  \end{align*}
  Formally applying $\delta$, we get
  \begin{align*}
    x^\alpha + \alpha^2 x^{\alpha - 1} \delta x + \delta y = 0 \\
    \delta x - \alpha y^{\alpha-1} \delta y = 0,
  \end{align*}
  and so
  \begin{equation*}
    \begin{pmatrix}
      \delta x \\ \delta y
    \end{pmatrix}
    = 
    \begin{pmatrix}
      \alpha^2 x^{\alpha-1} & 1 \\ 1 & -\alpha y^{\alpha-1}
    \end{pmatrix}^{-1} 
    \begin{pmatrix}
      -x^\alpha \\ 0
    \end{pmatrix}.
  \end{equation*}
  The matrix which needs to be invertible is precisely the matrix of
  partial derivatives of the original equation, which is invertible so
  long as $(x,y)$ was a \emph{regular} solution of the system of
  equations.
\end{example}
\begin{example} \label{exdos}
  Suppose $x \in K$ satisfies the equation
  \begin{equation*}
    x^\alpha + \alpha x + x = 3.
  \end{equation*}
  Formally applying $\delta$, we get
  \begin{equation*}
    \alpha x^{\alpha-1} \delta x + x  + \alpha \delta x + \delta x = 0,
  \end{equation*}
  and so
  \begin{equation*}
    \delta x = \frac{-x}{\alpha x^{\alpha-1} + \alpha + 1}.
  \end{equation*}
  Again, the denominator is the derivative $f'(x)$ for $f(x) =
  x^\alpha + \alpha x + x$.
\end{example}
As mentioned earlier, the funny thing in these calculations is that
one treats $\alpha$ as a variable with derivative 1 when it occurs as
a coefficient, but as a constant when it appears as an exponent.  For
instance, in Example~\ref{exdos} we are really looking at the solution
of $x^\alpha + tx + x = 3$ as $t$ ranges over a neighborhood of
$\alpha$ in $\Rr$, and taking the derivative $dx/dt$.

This recipe shows that $\delta$ is \emph{unique}, if it exists.  But \emph{existence} of $\delta$ is more subtle.  We need to show that the recipe does not give inconsistent values for $\delta x$.

For example, we would run into trouble if there were some $x \in K$
such that
\begin{gather}
  x^\alpha + \alpha x + x = 3 \label{eqn1} \\
  3x + \alpha = 4 \label{eqn2}
\end{gather}
but
\begin{equation*}
  \frac{-x}{\alpha x^{\alpha-1} + \alpha + 1} \ne -1/3.
\end{equation*}
The left side is the value of $\delta$ calculated using
Equation~\ref{eqn1}, and the right side is calculated using
Equation~\ref{eqn2}; if they disagree then no possible choice of
$\delta$ can satisfy the requirements of Theorem~\ref{the-key-0}.

Of course, in reality there is no problem because the genericity of
$\alpha$ ensures that the solutions of Equation~\ref{eqn1} do not
intersect the solutions of Equation~\ref{eqn2}.  Unfortunately, we
need something like Ax-Schanuel \cite{ax-schanuel} to make this
argument work precisely in general.  More precisely, we need the
Bays-Kirby-Wilkie version of Schanuel's conjecture for $x^\alpha$
\cite{bkw}.

As an example of how (Ax-)Schanuel intervenes, suppose we had started
from $(\Rr,+,\cdot,\alpha^x)$ rather than $(\Rr,+,\cdot,x^\alpha)$.\footnote{If we could use $(\Rr,+,\cdot,\alpha^x)$ instead of $(\Rr,+,\cdot,x^\alpha)$, our life would be much easier in some ways, because $\alpha^x$ is defined on all of $\Rr$ rather than only on positive numbers.  In particular, all of \S\ref{unneed} could be skipped.}
Let $K'$ be the prime model of $(\Rr,+,\cdot,\alpha^x)$.  Note $\log
\alpha \in K'$ because the derivative of $\alpha^x$ is $(\log \alpha)
\alpha^x$.  For various reasons, the appropriate analog of
Theorem~\ref{the-key-0} is probably the following:
\begin{hypothesis} \label{hyp2}
  There is a unique derivation $\delta: K' \to K'$ satisfying the
  following conditions:
  \begin{gather*}
    \delta (\alpha^x) = (\log \alpha) \cdot \alpha^x \delta x \\
    \delta (\log \alpha) = 1.
  \end{gather*}
\end{hypothesis}
If Hypothesis~\ref{hyp2} is true, then one can use the fact that $e^x
= \alpha^{\frac{x}{\log \alpha}}$ to calculate \[\delta(e^e) =
\frac{-2e^{e+1}}{\log \alpha} \ne 0.\] This would imply that $e^e$ is
transcendental, answering a major open problem in transcendence
theory.  I suspect that Hypothesis~\ref{hyp2} is difficult to prove
without Schanuel's conjecture.

\subsection*{Outline of the paper}
In Section~\ref{sec2}, we review prior results about the structure
$(\Rr,+,\cdot,x^\alpha)$, and in Section~\ref{unneed} we use the
technique of Jones and Wilkie \cite{jones-wilkie} to characterize
definable closure in this structure.  In Section~\ref{schan} we tease
out the consequence of the Bays-Kirby-Wilkie~\cite{bkw} Schanuel
property for $x^\alpha$ that will be needed to show that the twisted
derivation $\delta$ is well-defined (i.e., the recipe for calculating
$\delta$ is self-consistent).  Section~\ref{delta-core} is the
technical heart of the paper, where we construct the twisted
derivative $\delta$.  The remaining easy parts of the proof are
completed in Section~\ref{easy-part}.  Section~\ref{conclusion} gives
some directions for further research, and explains the research
questions which led to the discovery of this example.

Appendix~\ref{app} proves the exchange property in weakly o-minimal
theories of fields.  This result is independent of the rest of the
paper, and is included because of its thematic connection to weak
o-minimality.

\subsection*{Notation and conventions}
``Definable'' means ``definable with parameters'', and ``0-definable''
means ``definable without parameters''.  If we say that a structure
$M$ is a ``field'', we mean that it's an expansion of a field.  If we
want $M$ to have no symbols beyond the language of rings, we say that
$M$ is a ``pure field''.  If $K$ is an ordered field, then $K_{>0}$
denotes the set of positive numbers.

If $(a_1,\ldots,a_n)$ is a row vector, then $(a_1,\ldots,a_n)^{\mathrm{T}}$ denotes its transpose, the column vector \[ \begin{pmatrix} a_1 \\ a_2 \\ \vdots \\ a_n \end{pmatrix}.\]  Notation like $(a_1,\ldots,a_n) \cdot (b_1,\ldots,b_n)^{\mathrm{T}}$ means the inner product $\sum_{i=1}^n a_ib_i$.

If $f_1,\ldots,f_m : \Rr^n \to \Rr$ are functions, then
$J(f_1,\ldots,f_m)$ is the Jacobian matrix, the $m \times n$ matrix
whose $i$th row is the vector $(\partial f_i / \partial x_1, \partial
f_i / \partial x_2, \ldots, \partial f_i / \partial x_n)$.  The
vanishing set of $f_1,\ldots,f_m$ is
\begin{equation*}
  V(f_1,\ldots,f_m) = \{a \in \Rr^n : f_1(a) = f_2(a) = \cdots =
  f_m(a) = 0\}.
\end{equation*}
If $m \le n$, the set of \emph{regular solutions} is
\begin{equation*}
  V^{reg}(f_1,\ldots,f_m) = \{a \in V(f_1,\ldots,f_m) :
  J(f_1,\ldots,f_m) \text{ has rank } m\}.
\end{equation*}
This set is an $(n-m)$-dimensional manifold by the implicit function theorem.

\textbf{For the remainder of the paper}, fix a positive real number $\alpha$
which is exponentially transcendental, i.e., not 0-definable in the
structure $\Rr_{\exp} = (\Rr,+,\cdot,\exp)$.

Let $R_n$ be the ring of functions $(\Rr_{>0})^n \to \Rr$ which are
given as Laurent polynomials in
\begin{equation*}
	x_1, \ldots, x_n, x_1^\alpha, \ldots, x_n^\alpha
\end{equation*}
with coefficients in $\Qq$.  Let $R_n[\alpha]$ be defined similarly,
but allowing $\Qq[\alpha]$-coefficients.  In other words,
\begin{align*}
	R_n &= \Qq[x_1^{\pm 1}, \ldots, x_n^{\pm 1}, x_1^{\pm \alpha}, \ldots, x_n^{\pm \alpha}] \\
	R_n[\alpha] &= \Qq[x_1^{\pm 1}, \ldots, x_n^{\pm 1}, x_1^{\pm \alpha}, \ldots, x_n^{\pm \alpha},\alpha]
\end{align*}
\begin{remark}
	\begin{enumerate}
		\item $R_n[\alpha]$ is closed under partial
		derivatives, but $R_n$ is not.
		\item If $f \in R_n[\alpha]$, then $f$ can be written as
		\begin{equation*}
			f(x_1,\ldots,x_n) = g(\alpha,x_1,\ldots,x_n)
		\end{equation*}
		for some $g \in R_{n+1}$.\footnote{As an example, the function
			$f(x,y) = \alpha x + y^\alpha$ can be written as $g(\alpha,x,y)$
			where $g(w,x,y) = wx + y^\alpha$.}  The right hand side makes sense
		because we chose $\alpha > 0$.
	\end{enumerate}
\end{remark}

\section{Review of prior results} \label{sec2}
Bays, Kirby, and
Wilkie prove the following Schanuel property \cite{bkw}:
\begin{fact} \label{f1}
  If $x_1,\ldots,x_n$ are positive real numbers which are
  multiplicatively independent, then
  $\trdeg(x_1,\ldots,x_n,x_1^\alpha,\ldots,x_n^\alpha/\alpha) \ge n$.
\end{fact}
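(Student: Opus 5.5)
Fact~\ref{f1} is the Bays--Kirby--Wilkie Schanuel property for real powers. The paper cites it from \cite{bkw} rather than proving it, so what follows is how I would reconstruct the argument.

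The plan is to reduce to Ax--Schanuel by a specialization argument, using that $\alpha$ is exponentially transcendental, i.e.\ generic over $\dcl^{\Rr_{\exp}}(\varnothing)$ in the sense of exponential-algebraic closure.

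\textbf{Step 1: set up the subvariety.}
Put $y_i=x_i^\alpha$ and $z_i=\log x_i$, so that $x_i=e^{z_i}$ and $y_i=e^{\alpha z_i}$. Suppose, for contradiction, that $\trdeg(\bar x,\bar y/\alpha)<n$. Then the point $(\bar z,\alpha\bar z,\bar x,\bar y)$ lies on a point of the graph of $\exp$ in dimension $2n$. Its exponential coordinates have transcendence degree $<n$ over $\Qq(\alpha)$. The logarithmic coordinates lie in the $\Qq(\alpha)$-linear space $\{(\bar u,\alpha\bar u)\}$, which has dimension $n$. Let $V$ be the locus of $(\bar x,\bar y)$ over $\Qq(\alpha)$, so $\dim V<n$.

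\textbf{Step 2: move $\alpha$.}
Since $\alpha\notin\dcl^{\Rr_{\exp}}(\varnothing)$, and exponential-algebraic closure coincides with $\dcl$ in $\Rr_{\exp}$, $\alpha$ is exponentially transcendental. I would then run the Ax--Schanuel argument uniformly over the parameter $t$ near $\alpha$, keeping the exponent $t$ as the moving parameter throughout. Concretely, consider the set of $t$ for which there exist $\bar z$ with $(e^{\bar z},e^{t\bar z})\in V_t$, where $V_t$ is the specialization of $V$. The configuration is definable in $\Rr_{\exp}$ from $\alpha$'s type.

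There are two cases for this set near $\alpha$.
\begin{itemize}
\item If it is not locally a neighborhood of $\alpha$, then $\alpha$ is defined as an isolated or boundary value. This would make $\alpha$ definable in $\Rr_{\exp}$ from the $\Qq$-data of $V$, which contradicts the choice of $\alpha$.
\item Otherwise there is a one-parameter family of solutions $(\bar z(t),t)$ for $t$ near $\alpha$.
\end{itemize}

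\textbf{Step 3: apply Ax--Schanuel in the differential field of germs.}
Work in the field of germs of functions of $t$, with derivation $d/dt$. Apply Ax's theorem to $z_1(t),\dots,z_n(t),tz_1(t),\dots,tz_n(t)$. If these are $\Qq$-linearly independent modulo constants, Ax--Schanuel gives
\begin{equation*}
\trdeg(\bar z,t\bar z,e^{\bar z},e^{t\bar z})\ge 2n+\operatorname{rk}(\text{Jacobian})\ge 2n+1.
\end{equation*}
On the other hand, $t$ contributes one dimension, $\bar z$ contributes at most $n$, and $V$ contributes fewer than $n$. Hence the total transcendence degree is at most $2n$, a contradiction.

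The degenerate case, where some $\Qq$-linear relation among $\bar z$ and $t\bar z$ holds modulo constants, must be ruled out separately. Multiplicative independence of $\bar x$ excludes relations among the $z_i$ alone. A relation that mixes $z_i$ and $tz_i$ forces $t$ to satisfy a $\Qq(\bar z)$-rational identity. At $t=\alpha$ this would make $\alpha$ lie in $\dcl^{\Rr_{\exp}}$ of the $\bar x$. Since everything is defined over $\Qq$, that again contradicts exponential transcendence, possibly after an induction on $n$ that first quotients out such relations.

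\textbf{Main obstacle.}
The hard part is Step 2. One must make precise that a dimension drop at the single point $\alpha$ propagates to an open family of $t$, with no hidden parameters beyond $\Qq$. This requires an o-minimal (Wilkie) uniform-finiteness or dimension argument in $\Rr_{\exp}$ together with the equality $\mathrm{ecl}=\dcl$. I would first attempt it using Wilkie's model completeness: express the failure as an existential $\Rr_{\exp}$-formula $\phi(\alpha)$. If $\phi$ fails at nearby $t$, then $\alpha$ is a boundary point of $\phi(\Rr)$ and is therefore $0$-definable, a contradiction.
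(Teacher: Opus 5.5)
The paper gives no proof of this Fact; it cites Bays--Kirby--Wilkie. Your sketch therefore has to stand on its own. The overall strategy can be made to work: deform $\alpha$ along a curve, then apply Ax in a field of germs with derivation $d/dt$ and constant field $\Rr$. As written, though, Steps 2 and 3 both have genuine gaps.

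\textbf{Step 2 produces the wrong family.} Consider the $0$-definable set of $t$ for which \emph{some} $\bar z$ satisfies $(e^{\bar z},e^{t\bar z})\in V_t$. It does contain an interval around $\alpha$, but its witnesses need have nothing to do with your $\bar z$. For example, whenever $(\bar 1,\bar 1)\in V$, the family $\bar z(t)\equiv 0$ witnesses every $t$.
\begin{itemize}
\item Linear independence is not first-order, so you cannot build it into the set.
\item If you instead ask for solutions near the given $\bar z$, the set becomes $\bar z$-definable. The boundary-point argument then only gives $\alpha\in\dcl(\bar z)$, which is not a contradiction.
\end{itemize}
What you need is a curve through the actual point $(\alpha,\bar z)$. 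Here is one way to get it.
\begin{itemize}
\item Take a $0$-definable $C^1$ cell $X$ containing $(\alpha,\bar z)$, lying inside the $0$-definable set of $(t,\bar z)$ with $(t,e^{\bar z},e^{t\bar z})$ on the $\Qq$-locus of $(\alpha,\bar x,\bar y)$, and with $\dim X=\dim(\alpha\bar z/\varnothing)$.
\item Since $(\alpha,\bar z)$ is generic in $X$, suppose the projection of $X$ to the $t$-coordinate had zero differential there. Then the projection would be locally constant on a $0$-definable relatively open part of $X$ containing the point, forcing $\alpha\in\dcl(\varnothing)$.
\item Hence the projection is a submersion at the point, and there is a definable curve $t\mapsto(t,\bar z(t))$ in $X$ with $\bar z(\alpha)=\bar z$.
\end{itemize}
This is where exponential transcendence really enters. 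The boundary-point remark you call the main obstacle is the easy part.

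\textbf{The degenerate case cannot be ruled out; it must be counted.} Ax needs $\Qq$-linear independence modulo the constant field $\Rr$, which is much stronger than independence of the values at $\alpha$. Both of your reasons for excluding relations are false.
\begin{itemize}
\item A constant coordinate such as $z_1(t)\equiv\log 2$ is a relation among the $z_i$ alone modulo constants, despite multiplicative independence.
\item $\alpha\in\dcl(\bar x)$ does not contradict exponential transcendence, which only says $\alpha\notin\dcl(\varnothing)$. For instance, $\bar x=(2,2^\alpha)$ is multiplicatively independent, $\alpha=\log x_2/\log x_1\in\dcl(\bar x)$, and the natural curve is $\bar z(t)=(\log 2,t\log 2)$, along which $z_2(t)=t\,z_1(t)$.
\end{itemize}

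Here is the count the proof actually needs. Let $k$ be the $\Qq$-dimension of $(\bar z(t),t\bar z(t))$ modulo $\Rr$, and let $m=\trdeg_{\Rr(t)}\bar z(t)$.
\begin{itemize}
\item Ax applied to a basis modulo $\Rr$ gives only $\trdeg_\Rr\ge k+1$.
\item The upper bound is $\trdeg_\Rr(t,e^{\bar z(t)},e^{t\bar z(t)})+m\le n+m$. Your bound $2n$ is the worst case $m=n$.
\item So you must show $k\ge n+m$: there are at most $n-m$ independent relations $(\bar p+t\bar q)\cdot\bar z(t)\in\Rr$ with $\bar p,\bar q\in\Qq^n$.
\end{itemize}
This is true. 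Such relations are $\Rr(t)$-linearly independent as linear relations among $1,z_1(t),\dots,z_n(t)$. Indeed, a dependence among the vectors $\bar p_j+t\bar q_j$ can be taken with coefficients in $\Qq[t]$. Comparing powers of $t$ then successively yields $\Qq$-linear relations among the $z_i(t)$ and among the relation vectors, all of which must vanish.

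This estimate is the real content of the argument when $\alpha\in\dcl(\bar x)$. Your proposal replaces it with an invalid exclusion argument.
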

Consider the structure $\Rr_\alpha := (\Rr,+,\cdot,x^\alpha,\alpha)$, where
the unary function $x^\alpha$ is extended to take the value 0 for $x
\le 0$.
By Wilkie's work on $\Rr_{\exp}$ \cite{wilkie-thm}, one knows that
$\Rr_\alpha$ is o-minimal.  Building on this, Miller proves the
following~\cite[Theorems~3.4, 4.6]{miller}:
\begin{fact} \label{f2}
  The structure $\Rr_\alpha$ is polynomially bounded and model
  complete.
\end{fact}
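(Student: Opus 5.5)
Since Fact~\ref{f2} is quoted from Miller, the plan is to reconstruct his argument in two stages: first model completeness, then polynomial boundedness using the existential normal form that model completeness provides.

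\textbf{Model completeness.} I would reduce to Wilkie's theorem on expansions of the real field by restricted Pfaffian chains. The function $x^\alpha$ on $\Rr_{>0}$ satisfies the Pfaffian equation $y' = \alpha y/x$. To get restricted functions, I would use multiplicativity $(xy)^\alpha = x^\alpha y^\alpha$ and the inversion identity $x^\alpha = 1/(1/x)^\alpha$. Any positive $x$ can be written, existentially, as a product of boundedly many factors lying in $[1/2,2]$, or as the reciprocal of such a product. Hence the graph of $x^\alpha$ is existentially definable from the restriction $p(x) = x^\alpha{\restriction}[1/2,2]$. That restriction, after an affine rescaling to $[-1,1]$, is a restricted Pfaffian function with analytic extension. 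It belongs to the Pfaffian chain $(1/x, x^\alpha)$ on a neighbourhood of $[1/2,2]$, with polynomial coefficients over $\Qq(\alpha)$. The constant $\alpha$ is named, so these coefficients are available. Wilkie's theorem of the complement (or Gabrielov's version) then shows that $(\Rr,+,\cdot,p,\alpha)$ is model complete. Since the two structures are existentially interdefinable in both directions, $\Rr_\alpha$ is model complete. The case distinction at $x \le 0$, where $x^\alpha$ is $0$, is quantifier-free and causes no trouble.

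\textbf{Polynomial boundedness.} By Miller's growth dichotomy, an o-minimal expansion of the real field either is polynomially bounded or defines $\exp$. So it suffices to show that every definable unary germ at $+\infty$ is eventually bounded by some power $x^N$. By model completeness, such a germ $g$ is the projection of a regular solution of a system $P_1 = \cdots = P_m = 0$. Here the $P_i$ are polynomials, with coefficients in $\Qq(\alpha)$, in $x$, auxiliary variables $y_j$, and their powers $y_j^\alpha$. I would rescale with $x = 1/t$ and treat the solution as a germ at $t = 0^+$. The aim is to show that this germ has an expansion of the form $\sum c_{k,l}\, t^{k+l\alpha}$ over a finitely generated semigroup of exponents. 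Such a germ is then bounded by a fixed power of $t$, giving $|g(x)| \le x^N$.

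\textbf{Main obstacle.} The hardest step is this last expansion. The functions $y_j^\alpha$ appear composed with the unknown germs $y_j(t)$, which may themselves tend to $0$ or $\infty$. Implicitly defined germs therefore need not be analytic in $t$, and a generalized Puiseux/Newton-polygon argument is needed. I would set this up as follows. Write each $y_j(t) = t^{\beta_j} u_j(t)$ with $u_j(0) \neq 0$, so that $y_j^\alpha = t^{\alpha\beta_j} u_j^\alpha$ with $u_j^\alpha$ analytic where $u_j$ is. Then show, by induction on the number of variables and using o-minimality (the germs are comparable), that the $\beta_j$ lie in $\Qq + \Qq\alpha$ and the $u_j$ are generalized power series in $t^{1/q}$ and $t^{\alpha/q}$.

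The appeal to Miller's dichotomy is only for framing: the direct bound $|g(x)| \le x^N$ already is polynomial boundedness. If a self-contained route is wanted, I would instead argue from the Hardy field generated by the existentially definable germs. Its value group would be $\Qq + \Qq\alpha$, which is archimedean, and this again forces polynomial bounds.
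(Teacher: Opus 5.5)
The paper gives no proof of Fact~\ref{f2}; it simply cites Miller. Your reconstruction has a genuine gap in each half.

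\emph{Model completeness.} The reduction to $p = x^\alpha\restriction[1/2,2]$ rests on a false claim. A product of $N$ factors from $[1/2,2]$ lies in $[2^{-N},2^N]$, so boundedly many factors (and reciprocals) only recover $x^\alpha$ on a compact subinterval of $\Rr_{>0}$. No cleverer existential formula can repair this. The structure $(\Rr,+,\cdot,p,\alpha)$ is a reduct of $\Rr_{\mathrm{an}}$ with a named constant, and every unary function definable in $\Rr_{\mathrm{an}}$ has a Puiseux expansion at $+\infty$ with rational exponents. Hence $x^\alpha$, with $\alpha$ irrational, is not definable from $p$ at all. The two structures are therefore not interdefinable, and Wilkie's or Gabrielov's theorem for \emph{restricted} Pfaffian functions tells you nothing directly about $\Rr_\alpha$. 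The real difficulty sits at the non-compact ends $0$ and $+\infty$ of the domain, and it needs a genuinely global argument there. Wilkie likewise needed an extra argument, beyond the restricted theorem, to get from restricted exponentiation to $\Rr_{\exp}$. That global step is what citing Miller supplies.

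\emph{Polynomial boundedness.} The generalized Puiseux step you flag carries all the content, and the sketch does not work as stated.
Writing $y_j(t) = t^{\beta_j}u_j(t)$ with $u_j(0)\neq 0$ presupposes that every definable germ is asymptotic to some $c\,t^{\beta}$. That is a \emph{consequence} of polynomial boundedness, not an available input. Nor is there any reason for $u_j$ to be analytic at $0$.

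The predicted exponent set is also wrong. With $x = 1/t$, the regular system $y_1 - x = 0$, $y_2 - y_1^\alpha = 0$, $y_3 - y_2^\alpha = 0$ gives $y_3 = t^{-\alpha^2}$. Since $\alpha$ is transcendental, $\alpha^2\notin\Qq+\Qq\alpha$. The exponents that occur include all of $\Qq(\alpha)$, so an induction aimed at $\Qq+\Qq\alpha$ already fails at nesting depth two.

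The Hardy-field alternative just asserts that the value group is archimedean, which is a restatement of polynomial boundedness.

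There is also a circularity. Passing from an existential definition to a \emph{regular} solution is the argument of Lemma~\ref{wilkie}, and that lemma itself assumes polynomial boundedness.

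A short correct route: $\Rr_\alpha$ is a reduct of Miller's $\Rr_{\mathrm{an}}^{\Rr}$ (restricted analytic functions together with all real power functions). Miller shows this structure is polynomially bounded, via his quantifier elimination for it, and polynomial boundedness passes to reducts.
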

Although we will not use it, it is worth noting the following theorem
of Jones and Servi \cite{servi}:
\begin{fact} \label{f3}
  The theory of $\Rr_\alpha$ is decidable relative to an oracle for
  the set $\{q \in \Qq : q < \alpha\}$.
\end{fact}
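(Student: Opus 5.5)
Since Fact~\ref{f3} is quoted from \cite{servi}, I only sketch how I would reconstruct it; throughout, ``effective'' means effective relative to the oracle $\{q \in \Qq : q < \alpha\}$. The plan is to show that both the true existential sentences and the true universal sentences of $\Th(\Rr_\alpha)$ are r.e.\ in the oracle. Model completeness (Fact~\ref{f2}) then finishes the argument. Given a sentence $\phi$, it is equivalent modulo $\Th(\Rr_\alpha)$ to an existential $\psi_1$ and, since $\neg\phi$ is also equivalent to an existential sentence, to a universal $\psi_2$. We search in parallel for three things:
\begin{itemize}
\item an existential $\psi_1$ together with a derivation of $\phi \leftrightarrow \psi_1$ from a fixed r.e.\ set $T$ of true axioms;
\item a verification that $\psi_1$ holds (using the existential enumeration);
\item or the analogous data showing $\neg\phi$.
\end{itemize}
So everything reduces to two facts. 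First, true existential sentences are semi-decidable. Second, there is an r.e.\ (in the oracle) theory $T \subseteq \Th(\Rr_\alpha)$ which proves the model-completeness equivalences and all true universal sentences.

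\textbf{Existential sentences.} By the Jones--Wilkie style analysis I would use (the content of Proposition~\ref{jones-wilkie-repeat} and its relativized form), a true existential sentence is witnessed by a \emph{regular} solution of a system $P_1 = \cdots = P_n = 0$ with $P_i \in R_n[\alpha]$, plus finitely many sign conditions. The oracle lets us compute $\alpha$, and hence $x^\alpha$ on rationals, to any precision with rigorous error bounds. Regularity means a Newton--Kantorovich (interval Newton) test on a small rational box certifies the existence of a solution. Enumerating all boxes and all systems therefore semi-decides existential truth.

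\textbf{The axioms $T$.} Let $T$ consist of the following:
\begin{itemize}
\item RCF;
\item the axioms that $x^\alpha$ is an order-preserving endomorphism of the positive multiplicative group;
\item the differential condition $x \cdot (x^\alpha)' = \alpha x^\alpha$, expressed first-order via difference quotients and bounds;
\item the cut axioms $q < \alpha$ or $q > \alpha$ read off from the oracle;
\item the Khovanskii-type uniform finiteness bounds for zero sets of systems from $R_n[\alpha]$, which are effectively computable.
\end{itemize}
I would then follow Wilkie's completeness argument for restricted Pfaffian structures. From these axioms, together with model completeness, one shows that every model of $T$ has the same universal theory as $\Rr_\alpha$; the route is that the prime substructure is determined by regular solutions and the cut of $\alpha$. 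Hence $T$ is complete, and its theorems give the required r.e.\ enumeration.

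\textbf{Main obstacle.} I expect the main difficulty to be the completeness of $T$, specifically handling \emph{non-regular} solutions, where the numerical certification fails. Here I would invoke the Schanuel property Fact~\ref{f1}. Its role is to show that any point definable over $\alpha$ is a coordinate of a regular solution, and that a system with a non-regular zero has a regular zero of a smaller system capturing the same information. This reduction lets $T$ decide universal sentences by a dimension-lowering induction.
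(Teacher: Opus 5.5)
The paper does not prove this fact: it is quoted from Jones and Servi and never used. Judged on its own, your sketch has a genuine gap.

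\textbf{The completeness claim is unsupported.} All the weight rests on the claim that your explicit axiom list $T$ is complete. If that were known, the parallel search and the existential enumeration would be superfluous, since an r.e.\ complete theory is decidable. There is no ``Wilkie completeness argument'' to follow. Wilkie's work on Pfaffian functions and Miller's Fact~\ref{f2} give o-minimality and model completeness of the theory of the real structure, not completeness of an explicit axiom list. You would also separately need your axioms to prove the model-completeness equivalences (an effective form of Fact~\ref{f2}), which you simply assume. For $\Rr_{\exp}$ the analogous claim would solve Tarski's problem, which Macintyre and Wilkie could only do assuming Schanuel's conjecture.

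Concretely, the step ``the prime substructure is determined by regular solutions and the cut of $\alpha$'' is what fails. Interval Newton plus the cut of $\alpha$ locate each regular solution $\bc$ and can refute a false equality $t(\bc)=0$. They can never confirm a true one: in a nonstandard model of $T$ the corresponding value is only known to be infinitesimal, and it is $0$ only if $T$ proves an identity forcing it. The same defect breaks your existential semi-decision procedure. A true existential sentence generally contains equations that hold at the witness without being among the equations of the certified square system. Compare the paper's remark that only genericity of $\alpha$ rules out an unexplained coincidence like $5^\alpha-3^\alpha=\alpha$.

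\textbf{The missing idea is a zero test, and that is where Fact~\ref{f1} is really used.} You assign Fact~\ref{f1} the job of producing regular witnesses and replacing non-regular zeros by regular ones. That part is Schanuel-free: it is Lemma~\ref{wilkie} and Corollary~\ref{cor1}, proved in Section~\ref{unneed} from model completeness and polynomial boundedness alone. What Fact~\ref{f1} actually provides is the following.
\begin{enumerate}
\item Flatten nested powers into new variables. Then pass to a multiplicatively independent tuple as in Lemma~\ref{slip-sub} and Proposition~\ref{lockdown}. You now have a regular solution $\bb\in\Rr_{>0}^n$ of $p_i(\bx,\bx^\alpha)=0$, $1\le i\le n$, with the $p_i$ Laurent polynomials over $\Qq[\alpha]$.
\item Regularity gives $\trdeg(\bb,\bb^\alpha/\Qq(\alpha))\le n$ by the argument of Remark~\ref{sard-idk}, and Fact~\ref{f1} gives $\ge n$.
\item Since $J(p_1,\ldots,p_n)$ has rank $n$ at $(\bb,\bb^\alpha)$, that point lies on a unique irreducible component $W$ of the variety $\{p_1=\cdots=p_n=0\}$, and $W$ is $n$-dimensional. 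By the previous step, the point is generic in $W$ over $\Qq(\alpha)^{\alg}$.
\item Hence $q(\bb,\bb^\alpha)=0$ iff $q$ vanishes on $W$. This holds iff $q$ vanishes on the real zero set of $p_1,\ldots,p_n$ inside some small rational box around $(\bb,\bb^\alpha)$.
\item That last condition is a semialgebraic statement about the transcendental number $\alpha$. If it holds, it holds for all $w$ in a rational interval around $\alpha$. So it is decidable with the oracle and provable from RCF plus the cut axioms.
\end{enumerate}
This zero test is what makes the atomic diagram of the prime model r.e.\ in the oracle. It is also what lets true equalities be proved rather than merely approximated. Combined with an effective form of model completeness, this is the route that yields the fact. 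Without it, nothing in your $T$ decides these equalities, so there is no reason for $T$ to be complete.
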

\begin{remark}
  Facts~\ref{f1} and \ref{f3} depend on $\alpha$ being exponentially
  transcendental, but Fact~\ref{f2} does not.
\end{remark}

\section{Definable closure in $\Rr_\alpha$} \label{unneed}
The goal of this section is Proposition~\ref{jones-wilkie-repeat},
which characterizes definable closure in elementary extensions of
$\Rr_\alpha$.  Everything in this section is probably obvious to
experts, and follows easily using the methods of Jones and Wilkie
\cite{jones-wilkie}.  For completeness, we include the proofs.
\begin{remark}
  At first glance, it seems like we should be able to directly cite
  \cite[Theorem~3.2, Corollary~3.4, Corollary~3.5]{jones-wilkie}.
  Unfortunately, $\Rr_\alpha$ is not literally a ``locally polynomial
  bounded structure'' in the sense of \cite[Section~2]{jones-wilkie},
  for the (seemingly frivolous) reason that the smooth function
  $x^\alpha$ is defined on $\Rr_{>0}$ rather than $\Rr$.  Of course,
  one can reduce to \cite[Theorem~3.2]{jones-wilkie} by moving to the
  locally polynomially bounded structure
  $(\Rr,+,\cdot,\alpha,(1+x^2)^\alpha)$, which is definitionally
  equivalent to $\Rr_\alpha$.  In this way, one can reduce
  Proposition~\ref{jones-wilkie-repeat} to
  \cite[Theorem~3.2]{jones-wilkie}.  The reduction is straightforward,
  but surprisingly lengthy, and it turns out to be faster to redo the
  proofs from scratch.
\end{remark}
  
\begin{lemma} \label{nearest}
  Let $M$ be an o-minimal structure expanding a real closed field.
  Let $D \subseteq M^n$ be a closed definable set.  Then there is
  $c \in \Qq^n$ such that there is a unique point in $D$
  maximally close to $c$.
\end{lemma}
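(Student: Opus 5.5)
We assume $D \neq \varnothing$, since otherwise there is nothing to prove, and we measure closeness by the squared Euclidean norm $\|x-c\|^2 = \sum_i (x_i-c_i)^2$, which is definable in $M$. The plan has three parts.
\begin{enumerate}
\item Show that for every $c \in M^n$ some point of $D$ is at minimal distance from $c$.
\item Show that the definable set $B$ of those $c$ for which the closest point is \emph{not} unique has empty interior, hence $\dim B < n$.
\item Show that no definable subset of $M^n$ of dimension $<n$ can contain all of $\Qq^n$.
\end{enumerate}
Any $c \in \Qq^n \setminus B$ then works. Part (3) is needed because $M$ may be non-archimedean, so $\Qq^n$ need not be dense in $M^n$; this part is where the real content lies.

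\emph{Existence.} Fix $c$ and any $d_0 \in D$, and let $r = \|d_0 - c\|$. The set $D \cap \{x : \|x-c\| \le r\}$ is nonempty, closed, bounded and definable. By o-minimality, the continuous definable function $x \mapsto \|x-c\|^2$ attains a minimum on it. That minimum is the minimum over all of $D$.

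\emph{$B$ has empty interior.} Let $U \subseteq M^n$ be a nonempty open box, take $c \in U$, and let $x \in D$ be a closest point to $c$. If $x=c$, then $x$ is the unique closest point, so $c \notin B$. Otherwise set $c_s = c + s(x-c)$ for small $s \in (0,1)$, chosen so that $c_s \in U$. Then $x$ is the unique closest point to $c_s$, by the usual argument. If $y \in D$ had $\|y-c_s\| \le \|x-c_s\| = (1-s)\|x-c\|$, the triangle inequality would give
\[\|y-c\| \le \|y-c_s\| + \|c_s - c\| \le \|x-c\|.\]
Minimality of $x$ forces equality throughout, so $y-c_s$ is a nonnegative multiple of $c_s-c$, hence of $x-c$. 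Since $\|y-c_s\| = \|x-c_s\|$, this gives $y=x$. Thus $c_s \in U \setminus B$. The set $B$ is definable, since it is defined by a first-order formula in the parameters defining $D$. So by o-minimal dimension theory $\dim B < n$.

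\emph{$\Qq^n \not\subseteq S$ for definable $S$ with $\dim S < n$.} We argue by induction on $n$. For $n=1$, $S$ is finite while $\Qq$ is infinite. For $n>1$, write points as $(a,y)$ with $a \in M$, $y \in M^{n-1}$. By the fiber dimension theorem, the definable set $\{a \in M : \dim S_a = n-1\}$ has dimension $\le \dim S - (n-1) \le 0$, so it is finite. Choose $a \in \Qq$ outside it. Then $\dim S_a < n-1$, and by induction there is $q \in \Qq^{n-1}$ with $q \notin S_a$, so $(a,q) \in \Qq^n \setminus S$.

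Applying this last part to $S = B$ yields $c \in \Qq^n$ whose closest point in $D$ is unique, which proves Lemma~\ref{nearest}.
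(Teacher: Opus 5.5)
Your proof is correct and is essentially the paper's argument: both show that the set of bad centers has empty interior by moving the center slightly toward a closest point $e$, so that the smaller ball meets $D$ only at $e$, and both then show that a definable set with empty interior cannot contain $\Qq^n$. For that last step you use the fiber dimension theorem with induction, where the paper counts how many points of $\{1,\ldots,k\}^n$ a non-open cell can contain; you also make explicit the existence of a closest point, which the paper uses tacitly (note that for $D=\varnothing$ the statement fails rather than being vacuous, though the paper likewise assumes $D \neq \varnothing$).
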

Note that if $c \in D$, then $c$ is the point in $D$
maximally close to $c$.
\begin{proof}
  Say that $c \in M^n$ is \emph{good} if there is a unique point in
  $D$ maximally close to $c$, and \emph{bad} otherwise.  The set of
  good points is definable.  If every point in $\Qq^n$ is bad, then
  the set of bad points has non-empty interior\footnote{This can be
  seen easily from cell decomposition.  A non-open cell has dimension
  less than $n$ and covers at most $k^{n-1}$ points in the set
  $\{1,\ldots,k\}^n$.  If $D$ has empty interior, then taking $k$
  larger than the number of cells, $D$ cannot cover
  $\{1,\ldots,k\}^n$.}.  Take $c$ in the interior of the bad points,
  and take $e \in D$ one of the points maximally close to $c$.  Note $c \notin D$ since $c$ is bad.  Let $r
  = ||e - c||$, and let $B$ be the closed ball of radius $r$ around
  $c$.  The interior of $B$ doesn't intersect $D$.  If $\epsilon > 0$
  is small enough, then the point $c' = c + (e - c)\epsilon$ is still
  bad, because $c$ is in the interior of the bad points.  Let $B'$ be
  the closed ball of radius $(1-\epsilon)r$ around $c'$.  Then $e \in
  B'$, and every other point of $B'$ is in the interior of $B$.  Consequently
  $D \cap B' = \{e\}$, and $e$ is the unique point maximally close to
  $c'$, contradicting badness of $c'$.
\end{proof}
\begin{lemma} \label{wilkie}
  Let $M$ be a polynomially bounded o-minimal structure.  Let $a \in
  M^n$ be a point.  Let $R$ be a collection of germs of smooth (i.e.,
  $C^\infty$) definable functions defined on open neighborhoods of
  $a$.  Suppose $R$ is a ring, $R$ is closed under taking partial
  derivatives, and $R$ contains all polynomials over $\Qq$.  Then
  there are $f_1,\ldots,f_m \in R$ such that
  \begin{enumerate}
  \item $a \in V^{reg}(f_1,\ldots,f_m)$.
  \item If $g \in R$ vanishes at $a$, then $g$ vanishes on a
    neighborhood of $a$ in $V^{reg}(f_1,\ldots,f_m)$.
  \item If $g \in R$ and the restriction $g \restriction
    V^{reg}(f_1,\ldots,f_m)$ has a critical point at $a$, then $g
    \restriction V^{reg}(f_1,\ldots,f_m)$ is constant on a
    neighborhood of $a$.
  \end{enumerate}
\end{lemma}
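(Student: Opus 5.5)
We choose $f_1,\ldots,f_m$ by an extremal argument. Among all finite tuples $(f_1,\ldots,f_m)$ from $R$ with $a \in V^{reg}(f_1,\ldots,f_m)$, take one making $m$ maximal. The empty tuple qualifies, with $V^{reg}=M^n$, and $m \le n$, so such a maximal tuple exists. Set $V = V^{reg}(f_1,\ldots,f_m)$. This is a definable $C^\infty$ submanifold of dimension $d = n-m$ near $a$. We may shrink to a neighborhood of $a$ on which $V$ is the graph of a smooth definable map over a coordinate $d$-plane, via the implicit function theorem, which holds in o-minimal expansions of fields. Condition (1) then holds by construction.

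For (3), let $g \in R$ and suppose $g\restriction V$ has a critical point at $a$. Then $\nabla g(a)$ lies in the span of $\nabla f_1(a),\ldots,\nabla f_m(a)$.

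First consider the case where every $h \in R$ has $h\restriction V$ critical at $a$. Pick $g \in R$ and a polynomial $p$ over $\Qq$. Then $h = g - p$ is again in $R$, so $\nabla g(a) - \nabla p(a)$ lies in the span $W$ of the $\nabla f_i(a)$. Since the gradients at $a$ of $\Qq$-polynomials, in particular of the coordinate functions $x_j$, span all of $M^n$, this forces $d=0$. In that case $V$ is discrete near $a$ and (2) and (3) are trivial.

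Otherwise the aim is to show $g \restriction V$ is constant near $a$. We argue by contradiction. Suppose $g\restriction V$ is nonconstant near $a$ but critical at $a$. Then some derivative of $g$ along $V$ does not vanish identically at $a$. The issue is doing this within $R$. The derivatives of $g\restriction V$ in the graph coordinates can be expressed, by Cramer's rule, as $h/\det(J')^k$ with $h \in R$. Here $J'$ is the $m\times m$ minor of $J(f_1,\ldots,f_m)$ that is nonzero at $a$, and $h$ is built from partial derivatives of $g$ and the $f_i$, which lie in $R$ since $R$ is closed under partial derivatives. Polynomial boundedness guarantees that a definable smooth function whose germ on $V$ at $a$ is nonzero has some derivative of finite order nonvanishing at $a$. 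Without this, flat functions could occur; ruling them out via polynomial boundedness is the key input, using Miller's result that definable smooth functions in polynomially bounded structures are quasianalytic in this sense. So choose a minimal-order iterated derivative along $V$ that is nonzero at $a$, and its predecessor $h$, which vanishes at $a$ but has nonzero gradient along $V$ at $a$. Adding $h$ (an element of $R$ after clearing the unit denominator) to the tuple gives $a \in V^{reg}(f_1,\ldots,f_m,h)$, contradicting maximality of $m$.

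For (2), let $g \in R$ vanish at $a$ and suppose $g\restriction V$ is not identically zero near $a$. By the same quasianalyticity, some iterated derivative of $g$ along $V$, again expressible in $R$ after multiplying by powers of $\det J'$, is nonzero at $a$. Taking the last one in the chain that vanishes at $a$ produces an element of $R$ vanishing at $a$ with gradient independent of $W$. This again contradicts maximality. The same argument also proves (3) directly, applied to $g-g(a)$, once we note that $g(a)$ can be handled. Rather than subtracting $g(a)$, which might not be in $R$, we apply the chain argument to the derivatives of $g$ along $V$, which do vanish at $a$.

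The main obstacle is the quasianalyticity step: showing that a nonzero germ on $V$ has a nonvanishing finite-order derivative. This is where polynomial boundedness enters, and it needs a careful citation of Miller's result. The rest is bookkeeping to keep the derivatives inside $R$.
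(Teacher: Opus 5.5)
Your proof is correct and follows essentially the same route as the paper: a maximal regular tuple, graph coordinates via the implicit function theorem, Miller's quasianalyticity for polynomially bounded structures, and a minimal-order iterated derivative along $V$ that would extend the tuple, with (3) obtained by applying this to the first derivatives of $g$ along $V$. The differences are cosmetic: the paper localizes at the prime $\mathfrak{p}$ of germs vanishing at $a$ and then clears denominators, whereas you track the denominators explicitly as powers of $\det J'$ via Cramer's rule; your preliminary case split forcing $d=0$ is harmless but unnecessary.
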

\begin{proof}
  Let $\pp \lhd R$ be the kernel of the evaluation map
  \begin{align*}
    R &\to M \\
    f &\mapsto f(a).
  \end{align*}
  Then $\pp$ is a prime ideal in $R$.  The localization $R_\pp$ is
  again a ring of germs of definable functions defined on
  neighborhoods of $\ba$, closed under partial derivatives.

  Take $f_1,\ldots,f_m \in R_\pp$ such that $a \in
  V^{reg}(f_1,\ldots,f_m)$, with $m$ maximal (possibly $m=0$).  There
  is some $s \in R \setminus \pp$ such that we can write every $f_i$
  as $f_i = g_i/s$ with $g_i \in R$.  Then the $g_i$ vanish at $a$,
  and the matrix of partial derivatives $\frac{\partial g_i}{\partial
    x_j}$ has rank $m$.\footnote{Note that $\frac{\partial
    g_i}{\partial x_j} = s \frac{\partial f_i}{\partial x_j}$ at the
  point $a$, because $f_i(a) = 0$.  So the Jacobian matrix of the
  $g_i$'s is $s(a)$ times the Jacobian matrix of the $f_i$'s, and the
  two matrices have the same rank.}  Replacing the $f_i$ with the
  $g_i$, we may assume the $f_i$ come from $R$.

  Around $a$, the set $V^{reg}(f_1,\ldots,f_m)$ looks locally like an
  $(n-m)$-dimensional manifold, by the implicit function theorem.
  Permuting coordinates, we may assume that the last $m$ columns of
  the Jacobian matrix $J(f_1,\ldots,f_m)$ are linearly independent,
  when evaluated at $a$.  Then $V^{reg}(f_1,\ldots,f_m)$ looks locally
  like the graph of some smooth definable function $\phi : U \to M^m$
  with $U \subseteq M^{n-m}$ an open neighborhood of
  $(a_1,\ldots,a_{n-m})$.  In particular, $\phi(a_1,\ldots,a_{n-m}) =
  (a_{n-m+1},\ldots,a_n)$.

  Let $R'$ be the set of germs of functions around
  $(a_1,\ldots,a_{n-m})$ of the form $g(x,\phi(x))$ with $g \in
  R_\pp$.  Using the implicit definition $\bar{f}(x,\phi(x)) = 0$, one
  can see that the partial derivatives $\frac{\partial
    \phi_j}{\partial x_i}$ are elements of $R'$.\footnote{To calculate
  the derivative of $\phi$ one has to invert a certain matrix---the
  final $m$ columns of $J(f_1,\ldots,f_m)$---but this causes no
  problems because $R_\pp$ is a local ring and the determinant of
  $J(f_1,\ldots,f_m)$ is invertible in $R_\pp$.}  It then follows that
  $R'$ is closed under partial derivatives,
  since \[\frac{\partial}{\partial x_i} g(\bx,\phi(\bx)) =
  \frac{\partial g}{\partial x_i} g(\bx,\phi(\bx)) + \sum_{j=1}^m
  \frac{\partial g}{\partial x_{j+(n-m)}}(\bx,\phi(\bx))
  \frac{\partial \phi_j}{\partial x_i}.\]
  \begin{claim}
    If $g \in R'$ vanishes at $(a_1,\ldots,a_{n-m})$, then $g$ is
    identically zero (as a germ).
  \end{claim}
  \begin{claimproof}
    Otherwise, a theorem of Miller~\cite{miller2} shows that some
    iterated partial derivative of $g$ doesn't vanish at $a$.  (This
    uses the polynomial boundedness.)  So then there is an iterated
    partial derivative $h$ of $g$ with
    \begin{equation*}
      h(a_1,\ldots,a_{n-m}) = 0 \ne \frac{\partial h}{\partial x_i}(a_1,\ldots,a_{n-m}).
    \end{equation*}
    But then $h = \tilde{h}(x,\phi(x))$ for some $\tilde{h} \in R_\pp$,
    and $(a_1,\ldots,a_n) \in V^{reg}(f_1,\ldots,f_m,\tilde{h})$,
    contradicting the maximality of $m$.
  \end{claimproof}
  Part (2) of the Lemma now follows: if $g \in R$ vanishes at
  $(a_1,\ldots,a_n)$, then its image $g(x,\phi(x))$ in $R'$ vanishes
  at $(a_1,\ldots,a_{n-m})$ so $g(x,\phi(x))$ vanishes for all $x$ on
  a neighborhood of $(a_1,\ldots,a_{n-m})$, meaning that $g$ vanishes
  on a neighborhood of $(a_1,\ldots,a_n)$ in
  $V^{reg}(f_1,\ldots,f_m)$.

  For part (3), if $g \in R$ has a critical point on
  $V^{reg}(f_1,\ldots,f_n)$ at $(a_1,\ldots,a_n)$, then the function
  $g(x,\phi(x))$ has a critical point at $(a_1,\ldots,a_{n-1})$.  All
  the partial derivatives vanish.  By the claim, all the partial
  derivatives are identically zero on a neighborhood of
  $(a_1,\ldots,a_{n-1})$.  This implies that $g(x,\phi(x))$ is
  constant on a neighborhood of this point, and so $g$ is constant on
  $V^{reg}(f_1,\ldots,f_n)$ around $(a_1,\ldots,a_n)$.
\end{proof}
Recall the rings $R_n$ and $R_n[\alpha]$ defined at the end of Section~\ref{sec1}:
\begin{align*}
	R_n &= \Qq[x_1^{\pm 1}, \ldots, x_n^{\pm 1}, x_1^{\pm \alpha}, \ldots, x_n^{\pm \alpha}] \\
	R_n[\alpha] &= \Qq[x_1^{\pm 1}, \ldots, x_n^{\pm 1}, x_1^{\pm \alpha}, \ldots, x_n^{\pm \alpha},\alpha]
\end{align*}
Recall that $R_n[\alpha]$ is closed under partial derivatives.
\begin{proposition} \label{jones-wilkie-repeat}
  Let $M$ be an elementary extension of $\Rr_\alpha$.  Suppose
  $a_1,\ldots,a_n,b$ are positive elements with $b \in
  \dcl(a_1,\ldots,a_n)$.  Then there are positive elements
  $b_2,\ldots,b_m$ and functions $f_1,\ldots,f_m \in R_{n+m}[\alpha]$ such
  that
  \begin{equation*}
    (b,b_2,\ldots,b_m) \in V^{reg}(f_1(\ba,-),\ldots,f_m(\ba,-)).
  \end{equation*}
\end{proposition}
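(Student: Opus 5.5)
Since $b \in \dcl(\ba)$ and $\Rr_\alpha$ is model complete (Fact~\ref{f2}), $b$ is defined from $\ba$ by an existential formula. Equivalently, $b$ is a coordinate of a tuple $(\ba, b, \bar c)$ lying in a $0$-definable set built from polynomial equations in the variables and their $\alpha$-th powers, with coefficients in $\Qq[\alpha]$. We may take all the auxiliary coordinates positive: split each variable $c$ into $c = c^+ - c^-$ with $c^\pm>0$, and note that any occurrence of $x^\alpha$ with $x\le 0$ can be handled by case distinction. The goal is to upgrade such a description to a \emph{regular} one. For this I will apply Lemma~\ref{wilkie} to the point $p=(\ba,b,\bar c)\in M^{n+k}$ with $R = R_{n+k}[\alpha]$. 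This $R$ is a ring of germs of smooth definable functions near $p$ (all coordinates are positive), it is closed under partial derivatives, it contains the $\Qq$-polynomials, and $M$ is polynomially bounded.

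The obstacle is that Lemma~\ref{wilkie} produces a regular manifold through $p$ in \emph{all} coordinates. What I need is regularity in the variables after $\ba$, with $\ba$ fixed as parameters. To get this, I will first choose the witness tuple well.

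\textbf{Step 1: choosing the witnesses.} Among all tuples $\bar c$ of positive elements of $M$ such that $(\ba, b, \bar c)$ satisfies a quantifier-free $\Ll$-condition forcing $b$ (for fixed $\ba$), choose the length $k$ minimal. Then choose the witness itself so that it is canonical over $\ba b$. The witness set $W=\{\bar c : \theta(\ba,b,\bar c)\}$ is $\ba$-definable, so we may take it closed (shrinking via cell decomposition) and pick $\bar c$ via Lemma~\ref{nearest} as the unique nearest point of $\overline W$ to a rational point. This makes $\bar c \in \dcl(\ba)$, and every coordinate of $(b,\bar c)$ is then isolated in its fiber.

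\textbf{Step 2: applying Lemma~\ref{wilkie}.} Next I apply Lemma~\ref{wilkie} in the variables $(\bar x, \bar y)$, where $\bar x$ ranges over $\ba$ and $\bar y$ over $(b,\bar c)$. This gives $f_1,\dots,f_m \in R$ with $p\in V^{reg}(\bar f)$, such that every function of $R$ vanishing at $p$ vanishes near $p$ on $V:=V^{reg}(\bar f)$. The key claim is that the projection $\pi : V \to M^n$ onto the $\bar x$-coordinates is a local diffeomorphism-onto-image near $p$ whose fibers are discrete. Granting this, $V$ near $p$ is locally the graph of a smooth function of $\bar x$ restricted to a submanifold of $\bar x$-space. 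One can then select $m' = \dim(\bar y\text{-space})$ of the $f_i$, or suitable $R$-combinations of them, whose Jacobian with respect to $\bar y$ is invertible at $p$. Substituting $\bar x=\ba$ yields the desired $V^{reg}(f_1(\ba,-),\dots)$ containing $(b,b_2,\dots)$.

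\textbf{Step 3: proving the key claim (the main obstacle).} Suppose the $\bar y$-directions were not fully cut out by $\bar f$ at $p$, i.e., $T_pV$ contains a nonzero vector $v$ with zero $\bar x$-component. Then the fiber $V\cap\pi^{-1}(\ba)$ is a manifold of positive dimension near $p$. I would then use part (3) of Lemma~\ref{wilkie} with $g$ a coordinate function $y_j$ (which lies in $R$) and a rational linear combination of coordinates, to show that some $y_j$ is nonconstant along that fiber. Meanwhile, part (2) shows that the defining condition $\theta$, whose equations lie in $R$ and vanish at $p$, still holds along nearby points of the fiber. That would produce infinitely many witnesses $(b',\bar c')$ over $\ba$ near $(b,\bar c)$. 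This contradicts either the uniqueness of $b$ or the isolation of $\bar c$ obtained from the nearest-point choice in Step 1.

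The delicate part is handling the inequalities in $\theta$: they persist near $p$ by openness. The strict/closed issues introduced by the nearest-point trick need care, and if necessary the distance-minimization condition can itself be encoded by critical-point equations in $R$, with Lagrange multipliers added as extra positive witnesses.
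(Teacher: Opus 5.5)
Your Step~3 does not produce a contradiction. Choosing $\bar c$ as the unique point of $\overline{W}$ nearest to a rational point puts $\bar c$ in $\dcl(\ba)$, but it does not make $\bar c$ an isolated point of $W$. For example, if $W$ is a line, its nearest point to a rational point is unique but not isolated. So a positive-dimensional family of witnesses $(b,\bar c')$ through $(b,\bar c)$ in the fiber over $\ba$ is consistent with everything you have arranged.

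What uniqueness of the minimizer actually gives is that $g(\by)=\|c-\theta(\by)\|^2$ has a strict local minimum at $p$ along that fiber. To exploit this you need ``critical point implies locally constant'' \emph{along the fiber}. Part~(3) of Lemma~\ref{wilkie}, applied in all variables, only concerns $g\restriction V$ for the whole manifold $V$. At $p$ this restriction is typically not critical, since moving $\bx$ moves $\by$. And the fiber $V\cap\{\bx=\ba\}$ is not cut out by elements of $R_{n+k}[\alpha]$, because the $a_i$ are not constants of that ring.

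There are further problems:
\begin{enumerate}
\item Passing from a vertical tangent vector to a positive-dimensional fiber needs $\pi\restriction V$ to have constant rank near $p$. This fails for general manifolds (e.g.\ $x=y^3$ at the origin). You would have to derive it from part~(2) via the minors of $d(\pi\restriction V)$.
\item After ``shrinking $W$ via cell decomposition,'' the set you minimize over is no longer given by equations in your ring. So points produced by part~(2) need not lie in it.
\item The Lagrange-multiplier fallback is circular. It presupposes that $W$ is already a regular zero set near $\bar c$, which is essentially what is to be proved.
\end{enumerate}

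The missing idea is to substitute the parameters \emph{before} invoking Lemma~\ref{wilkie}. Let $B_m$ be the ring of functions $\by\mapsto f(\ba,\by)$ with $f\in R_{n+m}[\alpha]$. It is a ring of smooth germs definable with parameters, closed under $\partial/\partial y_j$, and containing $\Qq[\by]$. So Lemma~\ref{wilkie} applies to it directly, and regularity in the $\by$-variables comes out automatically, with no projection problem.

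The paper's argument then runs as follows:
\begin{enumerate}
\item Model completeness gives $\{b\}$ as the first-coordinate projection of $V(f)$ for a single $f\in B_m$ (equations only). Hence $V(f)$ is closed in $M_{>0}^m$, and $\theta(x)=x-x^{-1}$ carries it to a closed subset of $M^m$; no shrinking is needed.
\item Lemma~\ref{nearest} gives a unique minimizer $\bb$ of $g=\|c-\theta(\by)\|^2$ on $V(f)$, and necessarily $b_1=b$.
\item Apply Lemma~\ref{wilkie} to $B_m$ at $\bb$. Part~(2) gives $V^{reg}(f_1,\dots,f_\ell)\subseteq V(f)$ near $\bb$. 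So $g\in B_m$ has a local minimum, hence a critical point, at $\bb$ on $V^{reg}$.
\item Part~(3) makes $g$ locally constant there, and uniqueness of the minimizer forces $\ell=m$.
\end{enumerate}

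Your all-variables route can in principle be repaired. One uses part~(2) to show that the rank of the matrix with rows $d(x_1\restriction V),\dots,d(x_n\restriction V),d(g\restriction V)$ is constant near $p$, so $g$ is constant on the fibers of $\pi\restriction V$. But that argument is not in your proposal.
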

\begin{proof}
	Let $B_m$ be the ring of functions $(M_{>0})^m \to M$ of the form $f(\ba,x_1,\ldots,x_m)$, with $f \in R_{n+m}[\alpha]$.  Note $B_m$ is closed under partial derivatives.
  The set $\{b\}$ is $\ba$-definable, so  by model completeness $\{b\}$ is the coordinate projection
  of $V(f) \subseteq M^m$ onto the first coordinate, for some $f \in B_m$.
  Let $\theta$ be the semialgebraic diffeomorphism
  \begin{align*}
    \theta : \Rr_{>0} &\to \Rr \\
    \theta(x) &= x - x^{-1}
  \end{align*}
  Let $D = \{(\theta(x_1),\ldots,\theta(x_m)) : \bx \in V(f)\}$.  By
  Lemma~\ref{nearest}, there is some $c \in \Qq^m$ such that there is
  a unique point $(u_1,\ldots,u_m) \in D$ minimizing
  $||c-(u_1,\ldots,u_m)||^2$.  Equivalently, there is a unique point
  $(b_1,\ldots,b_m) \in V(f)$ minimizing $||c -
  (\theta(b_1),\ldots,\theta(b_m))||^2$.  Since $(b_1,\ldots,b_m) \in
  V(f)$, we must have $b_1 = b$.

  Applying Lemma~\ref{wilkie} to the point $\bb$ and the ring
  $B_m$, we get $f_1,\ldots,f_\ell \in B_m$ with the
  following properties:
  \begin{enumerate}
  \item $\bb \in V^{reg}(f_1,\ldots,f_\ell)$.
  \item If $g \in B_m$ vanishes at $\bb$, then it vanishes on
    a neighborhood of $\bb$ in $V^{reg}(f_1,\ldots,f_\ell)$.
  \item If $g \in B_m$ has a critical point at $\bb$ on the
    manifold $V^{reg}(f_1,\ldots,f_\ell)$, then it's locally
    constant.
  \end{enumerate}
  It remains to show that $\ell = m$.  Applying the second point to $g
  = f$, we see that $f$ vanishes on a neighborhood of $\bb$ in
  $V^{reg}(f_1,\ldots,f_\ell)$.  So $V^{reg}(f_1,\ldots,f_\ell)
  \subseteq V(f)$, at least locally near $\bb$.

  The function $g(x_1,\ldots,x_m) = ||c -
  (\theta(x_1),\ldots,\theta(x_m))||^2$ is in $B_m$, and $\bb$
  is a local minimum of $g$ on $V(f)$, hence on
  $V^{reg}(f_1,\ldots,f_\ell)$.  It follows that $g$ has a critical
  point at $\bb$.  By the third point of Lemma~\ref{wilkie}, $g$ is
  locally constant on a neighborhood of $\bb$ in
  $V^{reg}(f_1,\ldots,f_\ell)$.  If $\ell < m$, then
  $V^{reg}(f_1,\ldots,f_\ell)$ is positive dimensional, so we get
  many points $\bc$ close to $\bb$ which lie on
  $V^{reg}(f_1,\ldots,f_\ell) \subseteq V(f)$ and have $g(\bc) =
  g(\bb)$, contradicting the fact that $\bb$ is the unique minimum
  point for $g$ on $V(f)$.  Instead, we must have $\ell = m$.  Then
  $\bb \in V^{reg}(f_1,\ldots,f_m)$ as desired.
\end{proof}
\begin{corollary} \label{cor1}
  If $b \in \Rr_{>0}$ is 0-definable in $\Rr_\alpha$, then there are
  $b_2,\ldots,b_m \in \Rr_{>0}$ and $g_1,\ldots,g_m \in R_m[\alpha]$
  such that $(b,b_2,\ldots,b_m) \in V^{reg}(g_1,\ldots,g_m)$.
\end{corollary}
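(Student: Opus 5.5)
This is the case $n=0$ of Proposition~\ref{jones-wilkie-repeat}, applied with $M = \Rr_\alpha$ itself, which is trivially an elementary extension of $\Rr_\alpha$. Since $b$ is $0$-definable, $b \in \dcl(\varnothing)$. The hypothesis of Proposition~\ref{jones-wilkie-repeat} then holds with the empty tuple $\ba$ of parameters.

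The only point needing care is that the proof of Proposition~\ref{jones-wilkie-repeat} goes through when $n=0$. I would check this step by step. With $n = 0$, the ring $B_m$ is exactly $R_m[\alpha]$, viewed as functions on $(\Rr_{>0})^m$, and it is closed under partial derivatives. Model completeness (Fact~\ref{f2}) makes $\{b\}$ the projection onto the first coordinate of $V(f)$ for some $f \in R_m[\alpha]$. Here the constant $\alpha$ is in the language, so it may appear as a coefficient, and the positivity of the auxiliary witnesses is handled as in the proposition. Lemma~\ref{nearest} supplies a rational point $c$ and a unique nearest point $\bb$ of $V(f)$ after transport by $\theta$. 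Lemma~\ref{wilkie}, which applies because $\Rr_\alpha$ is polynomially bounded and $R_m[\alpha]$ contains $\Qq$-polynomials, then gives $f_1,\ldots,f_\ell$. The uniqueness-of-minimum argument forces $\ell = m$.

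Setting $g_i = f_i$ gives $g_1,\ldots,g_m \in R_m[\alpha]$ with $(b,b_2,\ldots,b_m) \in V^{reg}(g_1,\ldots,g_m)$, where $b_2,\ldots,b_m$ are the remaining coordinates of $\bb$. These are positive real numbers, since $V(f)$ lies in $(\Rr_{>0})^m$. I expect no real obstacle. The only subtlety is confirming that the degenerate case $n=0$ and the choice $M = \Rr_\alpha$ are allowed, and that the functions $R_{0+m}[\alpha]$ with no parameters substituted are literally elements of $R_m[\alpha]$. Both are immediate from the definitions.
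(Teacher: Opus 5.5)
Your proposal is correct and matches the paper, which states this corollary without separate proof as the $n=0$ instance of Proposition~\ref{jones-wilkie-repeat}, taking $M=\Rr_\alpha$ and $b\in\dcl(\varnothing)$. Re-walking the proposition's proof for $n=0$ is harmless but unnecessary, since nothing in that proof assumes $n\ge 1$, and $R_{0+m}[\alpha]$ is literally $R_m[\alpha]$.
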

\begin{definition} \label{implicit}
  In the structure $\Rr_\alpha$, a 0-definable function $f : U \to
  \Rr_{>0}$ with $U \subseteq (\Rr_{>0})^n$ is \emph{implicitly
  defined} if there are 0-definable functions $f_2,\ldots,f_m :
  U \to \Rr_{>0}$ and functions $g_1,\ldots,g_m \in
  R_{n+m}[\alpha]$ such that
  \begin{equation*}
    (f(\ba),f_2(\ba),\ldots,f_m(\ba)) \in
    V^{reg}(g_1(\ba,-),\ldots,g_m(\ba,-))
  \end{equation*}
  for every $\ba \in U$.
\end{definition}
\begin{corollary} \label{cor2}
  If $U \subseteq (\Rr_{>0})^n$ is open and $f : U \to \Rr_{>0}$ is
  0-definable in $\Rr_\alpha$, then there are 0-definable
  $U_1,\ldots,U_m \subseteq U$ such that $f \restriction U_i$ is
  implicitly defined, and $\dim(U \setminus \bigcup_{i=1}^m U_i) < n$.
\end{corollary}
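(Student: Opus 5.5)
The plan is to combine Proposition~\ref{jones-wilkie-repeat}, applied in an elementary extension at a generic point of $U$, with a compactness and dimension argument.

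\textbf{Step 1: a dimension reduction.} Let $Z \subseteq U$ be the set of $\ba \in U$ at which $f$ is \emph{not} implicitly defined near $\ba$ by any single presentation. We need care here, because ``implicitly defined'' quantifies over 0-definable auxiliary functions $f_2,\ldots,f_m$, and this is not a single first-order condition. So I would proceed by countable enumeration. Each candidate presentation consists of
\begin{itemize}
\item a tuple $(g_1,\ldots,g_m)$ of elements of $R_{n+m}[\alpha]$, of which there are countably many, and
\item a 0-definable tuple $(f_2,\ldots,f_m)$, also countably many since the language is countable.
\end{itemize}
Each such presentation $\sigma$ determines a 0-definable set
\[
U_\sigma = \{\ba \in U : (f(\ba),f_2(\ba),\ldots,f_m(\ba)) \in V^{reg}(g_1(\ba,-),\ldots,g_m(\ba,-))\}.
\]
On $U_\sigma$, the restriction of $f$ is implicitly defined. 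The goal is to show that finitely many $U_\sigma$ cover $U$ up to a set of dimension $<n$.

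\textbf{Step 2: compactness at a generic point.} Suppose not. Then for every finite family $\sigma_1,\ldots,\sigma_k$, the 0-definable set $U \setminus \bigcup_i U_{\sigma_i}$ has dimension $n$. By compactness, in a sufficiently saturated elementary extension $M \succeq \Rr_\alpha$ there is a point $\ba \in U(M)$ that avoids every $U_\sigma(M)$ and is generic over $\varnothing$, i.e.\ $\dim(\ba/\varnothing) = n$. This uses the fact that in o-minimal theories ``$\dim < n$'' is definable, so the type saying ``$\ba$ lies outside every 0-definable set of dimension $<n$ and outside every $U_\sigma$'' is finitely satisfiable. Now $b = f(\ba) \in \dcl(\ba)$ is positive, and $\ba$ has positive coordinates. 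Proposition~\ref{jones-wilkie-repeat} then gives positive $b_2,\ldots,b_m \in M$ and $g_1,\ldots,g_m \in R_{n+m}[\alpha]$ with $(b,b_2,\ldots,b_m) \in V^{reg}(g_1(\ba,-),\ldots,g_m(\ba,-))$.

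\textbf{Step 3: making the auxiliaries 0-definable.} The point I expect to be the main obstacle is that the $b_i$ must be of the form $f_i(\ba)$ for 0-definable $f_i$. Inspecting the proof of Proposition~\ref{jones-wilkie-repeat}, the tuple $\bb$ is the unique minimizer of a $\ba$-definable function on $V(f)$, where the rational point $c$ and the set $V(f)$ are chosen with only $\ba$ as parameters. Hence each $b_i \in \dcl(\ba)$. Since $\dcl(\varnothing)$-parameters suffice (the theory has definable Skolem functions, being an o-minimal expansion of a field), we can write $b_i = f_i(\ba)$ with $f_i$ 0-definable, and we may shrink the domain to $U$. Let $\sigma$ be the resulting presentation. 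Then $\ba \in U_\sigma(M)$, contradicting the choice of $\ba$. This proves the claim.

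Finally, each $U_\sigma$ is 0-definable, and the finitely many chosen sets satisfy $\dim(U \setminus \bigcup_i U_{\sigma_i}) < n$, which is exactly the statement. Note that genericity of $\ba$ was not even needed in the end. It suffices that the type ``$\ba \in U$, $\ba \notin U_\sigma$ for all $\sigma$, $\ba$ outside all 0-definable sets of dimension $<n$ that are complements of finite unions'' is consistent, and that consistency is exactly what the negation of the conclusion provides.
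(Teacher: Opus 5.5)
Your proof is correct and is essentially the compactness argument the paper has in mind when it cites Jones--Wilkie: realize a point of $U$ avoiding every $U_\sigma$, apply Proposition~\ref{jones-wilkie-repeat}, and note that the auxiliary coordinates lie in $\dcl(\ba)$. That last fact is immediate without inspecting the proof or invoking Skolem functions, since $V^{reg}(g_1(\ba,-),\ldots,g_m(\ba,-))$ is a finite $\ba$-definable set; and, as you observe, without genericity the argument gives a finite cover of all of $U$, whereas with genericity you can also replace each $U_\sigma$ by its interior to get open $U_i$, which is how the corollary is used in the proof of Theorem~\ref{definability}.
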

\begin{proof}
  This follows from Proposition~\ref{jones-wilkie-repeat} by a
  compactness argument, exactly like \cite[Corollaries
    3.4--3.5]{jones-wilkie}.
\end{proof}
Going forward, the only things we will need from this section are Corollaries~\ref{cor1} and \ref{cor2}.

\section{Application of Schanuel} \label{schan}
\begin{remark} \label{sard-idk}
  Let $\ba \in \Rr^n$ be a point.  Let $f : U \to \Rr^k$ be
  semialgebraic over $\Qq$, i.e., $\Qq$-definable in $(\Rr,+,\cdot)$.
  Suppose that $f$ is $C^\infty$ and $\ba \in U$.  Suppose that
  $f(\ba) = \bar{0} \in \Rr^k$, and the Jacobian derivative matrix of
  $f$ at $\ba$ has rank $k$ (i.e., the map on tangent spaces is onto).
  Then $\trdeg(\ba/\Qq) \le n-k$.
\end{remark}
\begin{proof}
  By the implicit function theorem, the set $V = \{\bar{x} : f(\bx) = \bar{0}\}$ has local
  dimension $n-k$ at $\ba$.  Take $B$ a $\Qq$-definable box around
  $\ba$ such that $\dim(B \cap V) = n - k$.  Then $B \cap V$ is an
  $\Qq$-definable set of dimension $n-k$ containing $\ba$.  Since
  o-minimal dimension agrees with topological dimension, and o-minimal
  rank agrees with transcendence degree, it follows that
  $\trdeg(\ba/\Qq) \le n - k$.
\end{proof}
Say that a function $f : \Rr^n \to \Rr^m$ is \emph{additively
matrix-like} if it's induced by a matrix with entries in $\Zz$, i.e.,
\begin{equation*}
  f(x_1,\ldots,x_n) = \left( \sum_{j=1}^n \mu_{ij} x_j : 1 \le i \le m\right)
\end{equation*}
for some $m \times n$ matrix $\{\mu_{ij}\}_{1 \le i \le m, ~ 1 \le j
  \le n}$.  Say that a function $f : \Rr_{>0}^n \to \Rr_{>0}^m$ is
\emph{multiplicatively matrix-like} if it corresponds to an additively
matrix-like function under the logarithm map, or equivalently, it has
the form
\begin{equation*}
  f(x_1,\ldots,x_n) = \left( \prod_{j=1}^n x_j^{\mu_{ij}} : 1 \le i \le m\right)
\end{equation*}
for some $m \times n$ matrix $\{\mu_{ij}\}_{1 \le i \le m, ~ 1 \le j
  \le n}$.
\begin{lemma} \label{slip-sub}
  Let $\bb = (b_1,\ldots,b_n)$ be a tuple of positive elements in
  $\Rr$.  Then there is a tuple $\bc = (c_1,\ldots,c_k)$ of
  positive elements such that
  \begin{enumerate}
  \item The tuple $\bc$ is multiplicatively $\Qq$-linearly
    independent.
  \item $\bb = f(\bc)$ for some multiplicatively matrix-like function
    $f$.
  \item $\bc = g(\bb)$ for some multiplicatively matrix-like function
    $f$.
  \end{enumerate}
\end{lemma}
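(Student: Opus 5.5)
Taking logarithms turns the statement into one about a finitely generated subgroup of a torsion-free abelian group, and the proof will work entirely on that side. Put $\beta_i = \log b_i \in \Rr$ and let $\Gamma = \Zz\beta_1 + \cdots + \Zz\beta_n \subseteq (\Rr,+)$. Since $\Gamma$ is a finitely generated subgroup of a torsion-free group, it is free abelian of some finite rank $k$. Choose a $\Zz$-basis $\gamma_1,\ldots,\gamma_k$ of $\Gamma$ and set $c_j = \exp(\gamma_j)$. These are positive reals, and they will be the required tuple $\bc$.

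Each of the three conditions is now read off directly.
\begin{enumerate}
\item Since the $\gamma_j$ form a $\Zz$-basis, they are $\Zz$-linearly independent, and clearing denominators makes them $\Qq$-linearly independent. Exponentiating, no nontrivial product $\prod c_j^{q_j}$ with $q_j \in \Qq$ equals $1$, so $\bc$ is multiplicatively $\Qq$-linearly independent.
\item Each $\beta_i$ lies in $\Gamma$, so $\beta_i = \sum_j \mu_{ij}\gamma_j$ with $\mu_{ij} \in \Zz$. This gives $b_i = \prod_j c_j^{\mu_{ij}}$, so $\bb = f(\bc)$ with $f$ multiplicatively matrix-like.
\item Each $\gamma_j$ also lies in $\Gamma$, so $\gamma_j = \sum_i \nu_{ji}\beta_i$ with $\nu_{ji} \in \Zz$. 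This gives $c_j = \prod_i b_i^{\nu_{ji}}$, so $\bc = g(\bb)$ with $g$ multiplicatively matrix-like.
\end{enumerate}

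The only point needing care is the choice of basis. If one simply took a maximal $\Qq$-independent subtuple of the $b_i$, condition (2) could fail: it would only give rational exponents, and the definition of matrix-like requires integer entries. Taking a $\Zz$-basis of the group $\Gamma$, rather than of its $\Qq$-span, avoids this. The existence of such a basis is the structure theorem for finitely generated torsion-free abelian groups. Beyond that there is no real obstacle, and this step is the one I would check most carefully.
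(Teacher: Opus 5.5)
Your proposal is correct and matches the paper's proof: take logarithms, note that the $\Zz$-span of the $\log b_i$ is finitely generated and torsion-free, hence free, choose a $\Zz$-basis, and read off integer transition matrices in both directions. You are right that a $\Zz$-basis, rather than a maximal $\Qq$-independent subtuple, is what keeps the exponents integral; your write-up also correctly handles the general case, whereas the paper's proof contains a stray assumption that the $b_i$ are already $\Qq$-linearly independent.
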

\begin{proof}
  Applying logarithms, we reduce to proving the analogous additive
  statement.  Suppose $b_1,\ldots,b_n \in \Rr$ are (additively)
  $\Qq$-linearly independent.  Then the group ($\Zz$-module) they
  generate is finitely generated and torsion-free (since $\Rr$ is).
  By the classification of finitely generated abelian groups, the
  generated group is free.  Take $c_1,\ldots,c_k$ a basis for this
  free group.  Then $\{b_1,\ldots,b_n\}$ and $\{c_1,\ldots,c_k\}$
  generate the same $\Zz$-module, so each tuple is a $\Zz$-linear
  combination of the other, showing that $\bb = f(\bc)$ and $\bc =
  g(\bb)$ for some additively matrix-like functions $f$ and $g$.
\end{proof}
\begin{lemma} \label{dumb}
  Suppose $\bc \in \Rr_{>0}^n$ is multiplicatively $\Qq$-linearly
  independent.  Suppose $f : \Rr_{>0}^n \to \Rr_{>0}^n$ is
  multiplicatively matrix-like and $f(\bc) = \bc$.  Then $f$ is the
  identity map.
\end{lemma}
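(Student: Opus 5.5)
The plan is to pass to logarithms and reduce the statement to a linear-algebra fact over $\Zz$. Write $f(x_1,\ldots,x_n) = \left(\prod_{j=1}^n x_j^{\mu_{ij}} : 1 \le i \le n\right)$ for an $n \times n$ integer matrix $\mu = \{\mu_{ij}\}$. The hypothesis $f(\bc) = \bc$ says that for each $i$,
\begin{equation*}
  \prod_{j=1}^n c_j^{\mu_{ij}} = c_i,
\end{equation*}
or equivalently
\begin{equation*}
  \prod_{j=1}^n c_j^{\mu_{ij} - \delta_{ij}} = 1,
\end{equation*}
where $\delta_{ij}$ is the Kronecker delta. Taking logarithms, this becomes $\sum_j (\mu_{ij} - \delta_{ij}) \log c_j = 0$ for each $i$.

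Multiplicative $\Qq$-linear independence of $\bc$ means exactly that $\log c_1, \ldots, \log c_n$ are $\Qq$-linearly independent real numbers. In particular they are $\Zz$-linearly independent, and so every integer coefficient $\mu_{ij} - \delta_{ij}$ in the relation above must vanish. Hence $\mu$ is the identity matrix, and $f(x_1,\ldots,x_n) = (x_1,\ldots,x_n)$ is the identity map on all of $\Rr_{>0}^n$, not just at $\bc$.

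There is no real obstacle here. The only point needing care is to read ``multiplicatively $\Qq$-linearly independent'' correctly: it means the only rational (equivalently, after clearing denominators, integer) exponents $q_j$ with $\prod c_j^{q_j} = 1$ are all zero. That reading is what the logarithm translation in the proof of Lemma~\ref{slip-sub} uses, so the same convention applies here.
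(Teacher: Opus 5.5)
Your proof is correct and matches the paper's argument: you pass to logarithms, observe that each row of the integer matrix $\mu - I_n$ yields a $\Zz$-linear relation among $\log c_1,\ldots,\log c_n$, and use $\Qq$-linear independence to conclude $\mu = I_n$.
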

\begin{proof}
  Using logarithms, it suffices to prove the equivalent additive
  statement.  Suppose $\bc \in \Rr^n$ is (additively) $\Qq$-linearly
  independent, and some integer matrix $M$ satisfies $M \cdot \bc = \bc$.  We
  must show that $M$ is the identity matrix $I_n$.  Let $M' = M -
  I_n$.  Then $M' \cdot \bc = 0$, and we must show that $M'$ vanishes.
  If $m_1,\ldots,m_n$ are the integers appearing in one of the rows of
  $M'$, then $\sum_{i=1}^n m_i c_i = 0$, implying that all the $m_i$
  vanish, as the $c_i$ are $\Qq$-linearly independent.
\end{proof}
Recall that $R_n$ is the ring of rational Laurent polynomials in
$x_1,\ldots,x_n,x_1^\alpha,\ldots,x_n^\alpha$.  The elements of $R_n$
are regarded as smooth functions $(\Rr_{>0})^n \to \Rr$.
\begin{remark} \label{spin-laurent}
  If $g : \Rr_{>0}^n \to \Rr_{>0}^m$ is multiplicatively matrix-like
  and $f \in R_m$, then $f \circ g \in R_n$.  This holds because for
  any $d_1,\ldots,d_n \in \Zz$, the two values
  \begin{equation*}
    x_1^{d_1}x_2^{d_2} \cdots x_n^{d_n} \text{ and } (x_1^{d_1} \cdots
    x_n^{d_n})^{\alpha}
  \end{equation*}
  are both Laurent polynomials in
  $x_1,\ldots,x_n,x_1^{\alpha},\ldots,x_n^{\alpha}$.
\end{remark}
\begin{proposition} \label{lockdown}
  If $f_1,\ldots,f_n \in R_n$ and $b_1,\ldots,b_n \in \Rr_{>0}$ are
  such that $(b_1,\ldots,b_n) \in V^{reg}(f_1,\ldots,f_n)$, then
  $\alpha$ is not in $\Qq(b_1,\ldots,b_n)^{\alg}$.
\end{proposition}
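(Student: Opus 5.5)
The plan is to play the Bays--Kirby--Wilkie lower bound (Fact~\ref{f1}) against an upper bound on transcendence degree coming from Remark~\ref{sard-idk}. Fact~\ref{f1} needs multiplicative independence, so the first step is to reduce to that case. Suppose for contradiction that $\alpha \in \Qq(\bb)^{\alg}$. Apply Lemma~\ref{slip-sub} to $\bb$ to get a multiplicatively independent tuple $\bc = (c_1,\ldots,c_k)$ and multiplicatively matrix-like maps $f, g$ with $\bb = f(\bc)$ and $\bc = g(\bb)$. Then $g \circ f$ is multiplicatively matrix-like and fixes $\bc$, so it is the identity by Lemma~\ref{dumb}. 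Hence $f : \Rr_{>0}^k \to \Rr_{>0}^n$ is an injective immersion: its differential at $\bc$ has rank $k$.

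Next I would record the field-theoretic facts. Matrix-like maps commute with $x \mapsto x^\alpha$ coordinatewise, so $\bb^\alpha = f(\bc^\alpha)$ and $\bc^\alpha = g(\bb^\alpha)$. Therefore $\Qq(\bb,\bb^\alpha) = \Qq(\bc,\bc^\alpha)$. Fact~\ref{f1} gives $\trdeg(\bc,\bc^\alpha/\alpha) \ge k$. Since $\alpha$ is assumed algebraic over $\Qq(\bb) \subseteq \Qq(\bc,\bc^\alpha)$, this yields
\begin{equation*}
\trdeg(\bc,\bc^\alpha/\Qq) = \trdeg(\bc,\bc^\alpha,\alpha/\Qq) \ge k + \trdeg(\alpha/\Qq) = k+1,
\end{equation*}
using that $\alpha$ is transcendental, which holds because it is exponentially transcendental.

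For the upper bound, write each $f_i \in R_n$ as $f_i(\bx) = F_i(\bx,\bx^\alpha)$ with $F_i$ a rational Laurent polynomial in $2n$ variables. Define the $\Qq$-semialgebraic smooth map
\begin{equation*}
G : \Rr_{>0}^{2k} \to \Rr^n, \qquad G(\bar{u},\bar{v}) = \bigl(F_i(f(\bar{u}), f(\bar{v}))\bigr)_{i \le n}.
\end{equation*}
Then $G(\bc,\bc^\alpha) = 0$. Along the $k$-dimensional submanifold $\{(\bar{u},\bar{u}^\alpha)\}$, the map $G$ restricts to $\bar{u} \mapsto (f_1,\ldots,f_n)(f(\bar{u}))$. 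The Jacobian of $(f_1,\ldots,f_n)$ at $\bb$ is invertible because $\bb$ is a regular solution, and $df$ at $\bc$ is injective with rank $k$. So the composite has rank $k$ at $\bc$. Consequently the Jacobian of $G$ at $(\bc,\bc^\alpha)$ has rank at least $k$. Choose $k$ of the components of $G$ whose Jacobian has rank $k$ at that point. Remark~\ref{sard-idk} then gives $\trdeg(\bc,\bc^\alpha/\Qq) \le 2k - k = k$, contradicting the lower bound $k+1$.

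The main obstacle is the rank computation in the last step: one must correctly transfer regularity at $\bb$ in the original $n$ variables to a rank-$k$ statement for $G$ in the $2k$ variables. This works because the tangent vectors of the form $(d\bar{u}, \alpha\bar{u}^{\alpha-1} d\bar{u})$ are mapped exactly as under the original Jacobian composed with $df$. Everything else is bookkeeping with Lemmas~\ref{slip-sub} and \ref{dumb}.
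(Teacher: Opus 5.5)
Your proof is correct and takes essentially the same approach as the paper. You reduce to a multiplicatively independent tuple $\bc$ via Lemmas~\ref{slip-sub} and \ref{dumb}, use the chain rule to turn regularity at $\bb$ into a rank-$k$ Jacobian of the Laurent-polynomial map at $(\bc,\bc^\alpha)$, and then play Remark~\ref{sard-idk} against Fact~\ref{f1}. The only differences are bookkeeping: you do it in one pass, composing at the level of the $2n$-variable polynomials $F_i$, whereas the paper first treats the independent case and then reduces to it via $f_i \circ g \in R_k$ (Remark~\ref{spin-laurent}).
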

\begin{proof}
  First suppose that $b_1,\ldots,b_n$ are multiplicatively
  independent.  Let $u : \Rr_{>0}^n \to \Rr_{>0}^{2n}$ be the map
  $u(\bx) = (\bx,\bx^\alpha)$.  Write $f_i$ as $p_i(u(\bx))$ with
  $p_i$ a rational Laurent polynomial in $2n$ variables.  The map
  $(f_1,\ldots,f_n)$ is the the composition $(p_1,\ldots,p_n) \circ
  u$.  Taking derivatives and applying the chain rule, we see that the
  Jacobian matrices satisfy the identity
  \begin{equation*}
    \underbrace{J(f_1,\ldots,f_n)}_{n \times n} =
    \underbrace{J(p_1,\ldots,p_n)}_{n \times 2n} \cdot
    \underbrace{J(u)}_{2n \times n}.
  \end{equation*}
  At the point $\bb$, $J(f_1,\ldots,f_n)$ is invertible, and so
  $J(p_1,\ldots,p_n)$ has rank $n$ at the point $(\bb,\bb^\alpha)$.
  By Remark~\ref{sard-idk}, $\trdeg(\bb,\bb^\alpha/\Qq) \le 2n - n =
  n$.  If $\alpha \in \Qq(b_1,\ldots,b_n)^{\alg}$, then
  \begin{equation*}
    \trdeg(\bb,\bb^\alpha/\alpha) = \trdeg(\bb,\bb^\alpha/\Qq) -
    \trdeg(\alpha/\Qq) \le n - \trdeg(\alpha/\Qq) = n - 1,
  \end{equation*}
  contradicting Fact~\ref{f1}.

  Next consider the general case, where $\bb$ is not necessaringly
  multiplicatively independent.  Apply Lemma~\ref{slip-sub} to get a
  tuple $c_1,\ldots,c_k$ of positive elements in $\Rr$ such that
  \begin{gather*}
    \bb = g(\bc) \\
    \bc = h(\bb)
  \end{gather*}
  for some multiplicatively matrix-like functions $g$ and $h$, and
  such that $\bc$ is multiplicatively independent.  The functions $g,
  h$ are polynomials with coefficients in $\Zz$, so $\bb$ and $\bc$
  are interalgebraic over $\Qq$.  In particular, $\alpha$ is algebraic
  over $\bb$ iff it's algebraic over $\bc$.

  Note that $(h \circ g)(\bc) = \bc$.  By Lemma~\ref{dumb}, $h \circ
  g$ is the identity map.  By the chain rule, it follows that the
  Jacobian matrix $J(g)$ of $g$ at $\bc$ is injective (i.e., rank
  $k$).  Let $f^*_i = f_i \circ g$; then $f^*_i \in R_k$ by
  Remark~\ref{spin-laurent}.  By the chain rule,
  \begin{equation*}
    J(f^*_1,\ldots,f^*_n) = J(f_1,\ldots,f_n) \cdot J(g),
  \end{equation*}
  so $J(f^*_1,\ldots,f^*_n)$ is injective (i.e., rank $k$) at $\bc$.
  Permuting the $f^*_i$, we can assume that $J(f^*_1,\ldots,f^*_k)$
  has rank $k$.  Then
  \begin{itemize}
  \item $\bc$ is a tuple of positive real numbers which is
    multiplicatively independent.
  \item $\bc \in V^{reg}(f^*_1,\ldots,f^*_k)$, and each $f^*_i \in
    R_k$.
  \item $\Qq(\bc)^\alg = \Qq(\bb)^\alg$.
  \end{itemize}
  Replacing $\bb$ and $f_1,\ldots,f_n$ with $\bc$ and
  $f^*_1,\ldots,f^*_k$, we reduce to the multiplicatively independent
  case.
\end{proof}
Going forward, Proposition~\ref{lockdown} is the only thing we'll need from this section.

\section{The twisted derivative $\delta$} \label{delta-core}
Continue to assume that $\alpha \in \Rr_{>0}$ is exponentially
transcendental.  Let $K$ be the prime model of $\Rr_\alpha$, i.e., the
set $K = \dcl(\varnothing) \prec \Rr_\alpha$.  Note that if $f \in
R_n[\alpha]$ and $b_1,\ldots,b_n \in K_{>0}$, then $f(b_1,\ldots,b_n) \in
K$.
\begin{definition}
  An \emph{anchor} is a tuple $(b_1,\ldots,b_n,f_1,\ldots,f_n)$ with
  $b_1,\ldots,b_n \in \Rr_{>0}$ and $f_1,\ldots,f_n \in R_{n+1}$ such
  that
  \begin{equation*}
    (b_1,\ldots,b_n) \in V^{reg}(f_1(\alpha,-),\ldots,f_n(\alpha,-)).
  \end{equation*}
  A positive number $b$ is \emph{anchored by} an anchor $A =
  (b_1,\ldots,b_n,f_1,\ldots,f_n)$ if $b$ is one of the $b_i$.
\end{definition}
\begin{remark} \label{all-anchored}
  If $b \in K_{>0}$, then $b$ is 0-definable in $\Rr_\alpha$, so
  Corollary~\ref{cor1} gives $b_2,\ldots,b_n \in \Rr_{>0}$ and
  $g_1,\ldots,g_n \in R_n[\alpha]$ such that $(b,b_2,\ldots,b_n) \in
  V^{reg}(g_1,\ldots,g_n)$.  Each $g_i$ can be written as
  $f_i(\alpha,-)$ for some $f_i \in R_{n+1}$.  Then $b$ is anchored by
  $(b,b_2,\ldots,b_n,f_1,\ldots,f_n)$.

  Conversely, if $(b_1,\ldots,b_n,f_1,\ldots,f_n)$ is an anchor, then
  $(b_1,\ldots,b_n)$ is a member of the finite 0-definable set
  $V^{reg}(f_1(\alpha,-),\ldots,f_n(\alpha,-))$, so each $b_i$ is in
  $\dcl(\varnothing) = K$.  So the elements of $K_{>0}$ are exactly
  the elements of $\Rr_{>0}$ which can be anchored.
\end{remark}
\begin{lemma} \label{tfae-confusion}
  Suppose $f_1,\ldots,f_n \in R_{n+1}$ and $(\alpha,b_1,\ldots,b_n)
  \in V(f_1,\ldots,f_n)$, or equivalently, $(b_1,\ldots,b_n) \in
  V(f_1(\alpha,-),\ldots,f_n(\alpha,-))$.  The following are
  equivalent:
  \begin{enumerate}
  \item $(b_1,\ldots,b_n) \in
    V^{reg}(f_1(\alpha,-),\ldots,f_n(\alpha,-))$, i.e.,
    $(b_1,\ldots,b_n,f_1,\ldots,f_n)$ is an anchor.
  \item There is no non-zero vector $(d_1,\ldots,d_n) \in \Rr^n$ such
    that
    \begin{equation*}
      J(f_1(\alpha,-),\ldots,f_n(\alpha,-)) \cdot
      (d_1,\ldots,d_n)^{\mathrm{T}} = 0.
    \end{equation*}
    at $\bb$.
  \item There is a unique vector $(d_1,\ldots,d_n) \in \Rr^n$ such
    that
    \begin{equation*}
      \frac{\partial f_i}{\partial w}(\alpha,\bb) + \sum_{j=1}^n \frac{\partial
        f_i}{\partial x_j}(\alpha,\bb) \cdot d_j = 0 \text{ for } 1 \le i \le n, \tag{$\ast$}
    \end{equation*}
  where we think of $f_i$ as a function $f_i(w,x_1,\ldots,x_n)$.
  \item There is a unique vector $(d_1,\ldots,d_n) \in \Rr^n$ such
    that
    \begin{equation*}
      J(f_1,\ldots,f_n) \cdot (1,d_1,\ldots,d_n)^{\mathrm{T}} = 0. \tag{$\dag$}
    \end{equation*}
    at $(\alpha,\bb)$.
  \end{enumerate}
\end{lemma}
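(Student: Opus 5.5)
This is pure linear algebra at the point $(\alpha,\bb)$. Write $A$ for the $n\times n$ matrix $J(f_1(\alpha,-),\ldots,f_n(\alpha,-))$ evaluated at $\bb$. Write $v$ for the column vector $\bigl(\frac{\partial f_1}{\partial w}(\alpha,\bb),\ldots,\frac{\partial f_n}{\partial w}(\alpha,\bb)\bigr)^{\mathrm{T}}$.

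The first step is to record the identity
\[ A_{ij} = \frac{\partial}{\partial x_j}\bigl(f_i(\alpha,-)\bigr)(\bb) = \frac{\partial f_i}{\partial x_j}(\alpha,\bb). \]
This holds because freezing the first variable at $\alpha$ commutes with partial differentiation in the $x_j$. It follows that the full Jacobian $J(f_1,\ldots,f_n)$ at $(\alpha,\bb)$ is the $n\times(n+1)$ block matrix $(v \mid A)$.

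Then I would prove the equivalences in a cycle.

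\textbf{(1)$\iff$(2).} By definition of $V^{reg}$ (with $m=n$), condition (1) says $A$ has rank $n$. For a square matrix this is equivalent to having trivial kernel, which is (2).

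\textbf{(3)$\iff$(4).} Expanding $(v\mid A)\cdot(1,d_1,\ldots,d_n)^{\mathrm{T}}$ gives exactly $v + A\,d$. Its $i$th entry is the left side of $(\ast)$, so the systems $(\ast)$ and $(\dag)$ have the same solution sets.

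\textbf{(2)$\iff$(3).} System $(\ast)$ is the affine equation $A d = -v$. If $A$ is injective, then $A$ is invertible since it is square, so $d = -A^{-1}v$ is the unique solution. Conversely, suppose $d$ is the unique solution and $Ae = 0$ with $e\neq 0$. Then $d+e$ is a second solution, contradicting uniqueness. So uniqueness forces $\ker A = 0$.

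The only point needing care is this last direction: uniqueness, rather than mere existence, of the solution is what forces injectivity. Existence alone would not suffice, since $-v$ might happen to lie in the image of a singular $A$. I expect no genuine obstacle beyond keeping track of which Jacobian is taken with respect to which variables.
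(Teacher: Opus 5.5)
Your proposal is correct and follows essentially the same route as the paper: (1)$\iff$(2) because a square matrix has full rank iff its kernel is trivial, (3)$\iff$(4) because the two systems are literally identical, and (2)/(1)$\Rightarrow$(3) by invertibility with (3)$\Rightarrow$(2) by shifting a solution by a kernel vector. Your explicit identification of $J(f_1,\ldots,f_n)$ at $(\alpha,\bb)$ with the block matrix $(v \mid A)$ makes precise what the paper leaves implicit.
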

\begin{proof}
  The equivalence of (1) and (2) holds because
  $J(f_1(\alpha,-),\ldots,f_n(\alpha,-))$ is a square ($n \times n$)
  matrix, so it's invertible iff it has non-trivial nullspace.
  Equations ($\ast$) and ($\dag$) mean literally the same thing, so
  (3) and (4) are equivalent.  Note that equation ($\ast$) has the form
  \[ \text{(something)} + J(f_1(\alpha,-),\ldots,f_n(\alpha,-)) \cdot (d_1,\ldots,d_n)^{\mathrm{T}} = 0.\]
  If (2) fails then the uniqueness could not possibly hold in (3),
  since we could shift $\bar{d}$ by the vector from (2).  So (3)
  implies (2).  Conversely, (1) implies (3) because (1) says that
  $J(f_1(\alpha,-),\ldots,f_n(\alpha,-))$ is invertible.
\end{proof}

\begin{definition}
  The \emph{derivative} of an anchor $(b_1,\ldots,b_n,f_1,\ldots,f_n)$
  is the unique sequence $(d_1,\ldots,d_n) \in \Rr^n$ such that
  \begin{equation*}
    J(f_1,\ldots,f_n) \cdot (1,d_1,\ldots,d_n)^{\mathrm{T}} = 0,
  \end{equation*}
  at $\bb$.
\end{definition}
\begin{remark}
  The derivative $(d_1,\ldots,d_n)$ is a tuple in $K$, because it's
  the solution to a linear system of equations whose coefficients are
  all in $K$.  The coefficients are in $K$ because they come from
  partial derivatives $\frac{\partial f_i}{\partial w}$ and
  $\frac{\partial f_i}{\partial x_j}$ evaluated at a tuple in $K$.
  The ring $R_{n+1}[\alpha]$ is closed under partial derivation, and
  if you evaluate a function from $R_{n+1}[\alpha]$ at a tuple in $K$,
  the output value is in $K$.
\end{remark}
\begin{lemma} \label{1-to-0}
  If $(b_1,\ldots,b_n,f_1,\ldots,f_n)$ is an anchor with derivative
  $(d_1,\ldots,d_n)$, and $b_i = 1$, then $d_i = 0$.
\end{lemma}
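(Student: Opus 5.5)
The idea is to eliminate the variable $x_i$ by plugging in $x_i = 1$, and to promote $w$ to an honest unknown. If $d_i \neq 0$, the resulting system has $(\alpha, \bb')$ as a \emph{regular} solution with $\Qq$-coefficients, and Proposition~\ref{lockdown} then forces $\alpha \notin \Qq(\alpha,\bb')^{\alg}$, which is absurd.

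Write each $f_k$ as $f_k(w,x_1,\ldots,x_n)$. Suppose for contradiction that $d_i \neq 0$. For $1 \le k \le n$ let
\[ g_k(w, x_1,\ldots,\widehat{x_i},\ldots,x_n) = f_k(w,x_1,\ldots,x_{i-1},1,x_{i+1},\ldots,x_n). \]
Substituting the constant $1$ for $x_i$ in a rational Laurent polynomial in $x_j, x_j^\alpha$ gives $1^{\pm 1} = 1^{\pm\alpha} = 1$. Hence each $g_k$ lies in $R_n$ in the $n$ variables $w$ and $x_j$ for $j \ne i$. Here the variable $w$ just happens not to appear through $w^\alpha$, which is allowed. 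Let $\bb' = (b_j)_{j \neq i}$. Then $(\alpha,\bb')$ is a tuple of positive reals, since $\alpha>0$ and the $b_j>0$, and $g_k(\alpha,\bb') = f_k(\alpha,\bb) = 0$ because $b_i = 1$.

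The key step is to show that $(\alpha,\bb') \in V^{reg}(g_1,\ldots,g_n)$. The Jacobian $J(g_1,\ldots,g_n)$ at $(\alpha,\bb')$ is the $n \times n$ matrix obtained from the $n \times (n+1)$ matrix $J(f_1,\ldots,f_n)$ at $(\alpha,\bb)$ by deleting the $x_i$-column. This holds because partial derivatives in the other variables commute with setting $x_i=1$.

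By Lemma~\ref{tfae-confusion}, the anchor condition says the last $n$ columns of $J(f_1,\ldots,f_n)$ are invertible. So $J(f_1,\ldots,f_n)$ has rank $n$, and its kernel is one-dimensional, spanned by $(1,d_1,\ldots,d_n)^{\mathrm{T}}$ by the definition of the derivative. If the column-deleted matrix had a nonzero kernel vector $v$, then inserting a $0$ in the $x_i$-slot would give a nonzero kernel vector of $J(f_1,\ldots,f_n)$ with vanishing $x_i$-coordinate. Such a vector would be a multiple of $(1,\bar d)$ with $d_i \neq 0$, which forces it to be $0$, a contradiction. So $J(g_1,\ldots,g_n)$ is invertible at $(\alpha,\bb')$, and this is exactly where the hypothesis $d_i \neq 0$ is used.

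Now apply Proposition~\ref{lockdown} to the $n$ functions $g_1,\ldots,g_n \in R_n$ and the positive tuple $(\alpha,\bb')$. It yields $\alpha \notin \Qq(\alpha,\bb')^{\alg}$, which is absurd. Hence $d_i = 0$.

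The main point needing care is the rank bookkeeping that identifies invertibility of the column-deleted Jacobian with $d_i \neq 0$. The edge case $n=1$ is included: there $\bb'$ is empty, and the single function $g_1(w)$ is a rational Laurent polynomial vanishing at the transcendental $\alpha$ with $g_1'(\alpha) \neq 0$, which is already impossible.
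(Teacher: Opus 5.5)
Your proof is correct and is essentially the paper's argument. The paper adjoins the equation $x_i - 1 = 0$ (via $g = x_1 - 1 \in R_{n+1}$) and shows that $(\alpha,\bb)$ would have to be a regular solution of the enlarged square system; you instead substitute $x_i = 1$. The two are equivalent, since the enlarged Jacobian is invertible exactly when the $x_i$-column-deleted Jacobian of the $f_k$ is, and both routes then rest on the kernel of $J(f_1,\ldots,f_n)$ being spanned by $(1,d_1,\ldots,d_n)^{\mathrm{T}}$ together with Proposition~\ref{lockdown}.
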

\begin{proof}
  Permuting coordinates, we may assume $i=1$.  Suppose $b_1 = 1$ and
  $d_1 \ne 0$.  Let $g \in R_{n+1}$ be $g(w,x_1,\ldots,x_n) = x_1 -
  1$.  Then $(\alpha,b_1,\ldots,b_n) \in V(f_1,\ldots,f_n,g)$.  If
  $(\alpha,b_1,\ldots,b_n) \in V^{reg}(f_1,\ldots,f_n,g)$, then we
  contradict Proposition~\ref{lockdown}, since $\alpha \in
  \Qq(\alpha,b_1,\ldots,b_n)^{\alg}$.

  Instead, $(\alpha,\bb) \notin V^{reg}(f_1,\ldots,f_n,g)$.  Then the
  square matrix $J(f_1,\ldots,f_n,g)$ is non-invertible at
  $(\alpha,b_1,\ldots,b_n)$, so it has non-trivial kernel/nullspace.
  That is, there is a non-zero vector $(u_0,\ldots,u_n) \in \Rr^{n+1}$
  such that
  \begin{equation*}
    J(f_1,\ldots,f_n,g) \cdot (u_0,\ldots,u_n)^{\mathrm{T}} = 0.
  \end{equation*}
  This means that
  \begin{align*}
    J(f_1,\ldots,f_n) \cdot (u_0,\ldots,u_n)^{\mathrm{T}} &= 0 \tag{$\ast$} \\
    \nabla g \cdot (u_0,\ldots,u_n)^{\mathrm{T}} &= 0 \tag{$\dag$}
  \end{align*}
  The vector $\nabla g$ is just $(0,1,0,\ldots,0)$, so ($\dag$) means
  that $u_1 = 0$.  By ($\ast$), we must have $(u_0,\ldots,u_n) = c
  \cdot (1,d_1,\ldots,d_n)$ for some $c \in \Rr$.  So $0 = u_1 = c
  \cdot d_1$.  As $d_1 \ne 0$, we must have $c = 0$, and then
  $(u_0,\ldots,u_n)$ is the zero vector, a contradiction.
\end{proof}
\begin{remark} \label{concatenanchor}
  Suppose $A = (b_1,\ldots,b_n,f_1,\ldots,f_n)$ and $A^* =
  (b^*_1,\ldots,b^*_m,f^*_1,\ldots,f^*_m)$ are anchors.  Then
  $(b_1,\ldots,b_n,b^*_1,\ldots,b^*_m)$ is a regular solution of the
  system of equations
  \begin{align*}
    f_i(\alpha,x_1,\ldots,x_n) &= 0  \text{ for } 1 \le i \le n \\
    f^*_i(\alpha,y_1,\ldots,y_m) &= 0 \text{ for } 1 \le i \le m,
  \end{align*}
  so we get an anchor $A \oplus A^* :=
  (b_1,\ldots,b_n,b^*_1,\ldots,b^*_m,g_1,\ldots,g_n,g^*_1,\ldots,g^*_m)$
  where $g_i, g^*_j \in R_{1+n+m}$ are the functions
  \begin{align*}
    g_i(w,x_1,\ldots,x_n,y_1,\ldots,y_m) &= f_i(w,x_1,\ldots,x_n) \\
    g_j^*(w,x_1,\ldots,x_n,y_1,\ldots,y_m) &= f^*_j(w,y_1,\ldots,y_m).
  \end{align*}
  If $(d_1,\ldots,d_n)$ is the derivative of $A$ and
  $(d^*_1,\ldots,d^*_m)$ is the derivative of $A^*$, then
  \begin{equation*}
    J(g_1,\ldots,g_n,g^*_1,\ldots,g^*_m) \cdot (1,d_1,\ldots,d_n,d^*_1,\ldots,d^*_m)^{\mathrm{T}} = 0,
  \end{equation*}
  because the left hand side is a vector whose top half is
  $J(f_1,\ldots,f_n) \cdot (1,d_1,\ldots,d_n)^{\mathrm{T}}$ and bottom
  half is $J(f^*_1,\ldots,f^*_n) \cdot
  (1,d^*_1,\ldots,d^*_m)^{\mathrm{T}}$.  So the derivative of $A
  \oplus A^*$ is $(d_1,\ldots,d_n,d^*_1,\ldots,d^*_m)$.

  To summarize, if $(b_1,\ldots,b_n,\ldots)$ is an anchor with
  derivative $(d_1,\ldots,d_n)$ and $(b_1^*,\ldots,b_m^*,\ldots)$ is
  an anchor with derivative $(d_1^*,\ldots,d_m^*)$, then we can form a
  concatenated anchor \[(b_1,\ldots,b_n,b^*_1,\ldots,b^*_m,\ldots)\]
  with derivative $(d_1,\ldots,d_n,d^*_1,\ldots,d^*_m)$.
\end{remark}
The next three lemmas are about adding dummy variables for $x_i/x_j$
and $x_i+x_j$ and $x_i^\alpha$.
\begin{lemma} \label{division}
  Let $(b_1,\ldots,b_n,f_1,\ldots,f_n)$ be an anchor with derivative
  $(d_1,\ldots,d_n)$.  Suppose $i \ne j$.  Then there is an anchor
  $(b_1,\ldots,b_n,b_i/b_j,g_1,\ldots,g_{n+1})$ with derivative
  $(d_1,\ldots,d_n, (b_jd_i - b_id_j)/b_j^2)$.
\end{lemma}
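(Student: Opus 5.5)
The plan is to add one new variable $y$ together with the equation $y x_j - x_i = 0$. Define $g_1,\ldots,g_n,g_{n+1} \in R_{n+2}$, viewed as functions of $(w,x_1,\ldots,x_n,y)$, by
\begin{align*}
  g_k(w,x_1,\ldots,x_n,y) &= f_k(w,x_1,\ldots,x_n) \quad (1 \le k \le n), \\
  g_{n+1}(w,x_1,\ldots,x_n,y) &= y x_j - x_i .
\end{align*}
This $g_{n+1}$ is a genuine polynomial in the coordinates with $\Qq$-coefficients, so it lies in $R_{n+2}$. Set $b_{n+1} = b_i/b_j > 0$. Clearly $(\alpha,\bb,b_{n+1}) \in V(g_1,\ldots,g_{n+1})$.

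The first step is regularity. The Jacobian of $(g_1(\alpha,-),\ldots,g_{n+1}(\alpha,-))$ in the variables $(x_1,\ldots,x_n,y)$ at $(\bb,b_{n+1})$ is block lower triangular. The top-left block is $J(f_1(\alpha,-),\ldots,f_n(\alpha,-))$ at $\bb$, which is invertible because $A$ is an anchor. The top-right column is zero, since the $f_k$ do not involve $y$. The bottom-right entry is $\partial g_{n+1}/\partial y = b_j$, which is nonzero. Hence the determinant is $b_j$ times a nonzero number, and so $(b_1,\ldots,b_n,b_i/b_j,g_1,\ldots,g_{n+1})$ is an anchor.

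The second step computes the derivative. By Lemma~\ref{tfae-confusion} it is the unique vector $(e_1,\ldots,e_{n+1})$ with $J(g_1,\ldots,g_{n+1}) \cdot (1,e_1,\ldots,e_{n+1})^{\mathrm{T}} = 0$ at $(\alpha,\bb,b_{n+1})$, so it suffices to check that the candidate $(d_1,\ldots,d_n,(b_jd_i-b_id_j)/b_j^2)$ satisfies this equation.
\begin{itemize}
\item The first $n$ rows are exactly $J(f_1,\ldots,f_n)\cdot(1,d_1,\ldots,d_n)^{\mathrm{T}}$ padded with a zero $y$-column, so they vanish by the definition of $\bar d$.
\item The last row is $\nabla g_{n+1} = (0,\ldots,-1,\ldots,b_{n+1},\ldots,b_j)$, with $-1$ in the $x_i$ slot, $b_{n+1}$ in the $x_j$ slot, and $b_j$ in the $y$ slot. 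The condition it imposes is
\[
-d_i + (b_i/b_j)\, d_j + b_j e_{n+1} = 0,
\]
which gives $e_{n+1} = (b_jd_i - b_id_j)/b_j^2$, the quotient rule.
\end{itemize}
Uniqueness in Lemma~\ref{tfae-confusion} then identifies this vector as the derivative.

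There is no serious obstacle here. The only point needing care is confirming that the new equation lies in $R_{n+2}$ and that the triangular block structure really makes the Jacobian invertible. The hypothesis $i \ne j$ is not actually needed for the argument, though it is harmless.
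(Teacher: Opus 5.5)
Your proposal is correct and is essentially the paper's proof. The paper also adjoins a variable $y$ with the equation $yx_j - x_i = 0$ (after permuting so that $i=1$, $j=2$) and reads off $e_{n+1} = (b_jd_i - b_id_j)/b_j^2$ from the last row of the Jacobian. The only cosmetic difference is the order of the steps: the paper shows that $J(g_1,\ldots,g_{n+1})\cdot(1,e_1,\ldots,e_{n+1})^{\mathrm{T}}=0$ has exactly this unique solution and then obtains regularity from Lemma~\ref{tfae-confusion}, whereas you first check invertibility directly via the block-triangular Jacobian and then verify the candidate. Your remark that $i \ne j$ is not needed is also correct.
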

\begin{proof}
  Permuting coordinates, we may assume $i=1$ and $j=2$.
  Let $g_1,\ldots,g_{n+1} \in R_{1+n+1}$ be the functions
  \begin{align*}
    g_i(w,x_1,\ldots,x_n,y) &= f_i(w,x_1,\ldots,x_n) \text{ for } 1 \le i \le n \\
    g_{n+1}(w,x_1,\ldots,x_n,y) &= yx_2 - x_1.
  \end{align*}
  Then $(\alpha,b_1,\ldots,b_n,b_1/b_2) \in V(g_1,\ldots,g_{n+1})$.
  \begin{claim}
    If $e_1,\ldots,e_{n+1} \in \Rr$, then
    \begin{equation*}
      J(g_1,\ldots,g_{n+1}) \cdot (1,e_1,\ldots,e_{n+1})^{\mathrm{T}} = 0 \iff
      (e_1,\ldots,e_{n+1}) = (d_1,\ldots,d_n, (b_2d_1 - b_1d_2)/b_2^2)
    \end{equation*}
  \end{claim}
  \begin{claimproof}
    The derivative $\nabla g_{n+1}$ is $(0,-1,y,0,0,\ldots,0,x_2)$, so
    the equation
    \begin{equation*}
      J(g_1,\ldots,g_{n+1}) \cdot (1,e_1,\ldots,e_{n+1})^{\mathrm{T}} = 0
    \end{equation*}
    is equivalent to the system
    \begin{align*}
      J(f_1,\ldots,f_{n+1}) \cdot (1,e_1,\ldots,e_n)^{\mathrm{T}} &= 0 \\
      (0,-1,b_1/b_2,0,0,\ldots,0,b_2) \cdot (1,e_1,\ldots,e_{n+1})^{\mathrm{T}} &= 0,
    \end{align*}
    or equivalently,
    \begin{align*}
      (e_1,\ldots,e_n) &= (d_1,\ldots,d_n) \\
      -e_1 + (b_1/b_2)e_2 + b_2 e_{n+1} &= 0.
    \end{align*}
    The unique solution is $(e_1,\ldots,e_n,e_{n+1}) =
    (d_1,\ldots,d_n,d_1/b_2 - b_1d_2/b_2^2)$.
  \end{claimproof}
  The claim then implies (see Lemma~\ref{tfae-confusion}) that
  $(b_1,\ldots,b_n,b_1/b_2,g_1,\ldots,g_{n+1})$ is an anchor with
  derivative $(d_1,\ldots,d_n,(b_2d_1 - b_1d_2)/b_2^2)$.
\end{proof}
\begin{lemma} \label{add}
  Let $(b_1,\ldots,b_n,f_1,\ldots,f_n)$ be an anchor with derivative
  $(d_1,\ldots,d_n)$.  Suppose $i \ne j$.  Then there is an anchor
  $(b_1,\ldots,b_n,b_i+b_j,g_1,\ldots,g_{n+1})$ with derivative
  $(d_1,\ldots,d_n, d_i+d_j)$.
\end{lemma}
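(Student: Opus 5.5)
Permuting coordinates, we may assume $i=1$ and $j=2$, and we follow exactly the template of Lemma~\ref{division}. Let $g_1,\ldots,g_{n+1} \in R_{1+n+1}$ be the functions
\begin{align*}
  g_k(w,x_1,\ldots,x_n,y) &= f_k(w,x_1,\ldots,x_n) \text{ for } 1 \le k \le n \\
  g_{n+1}(w,x_1,\ldots,x_n,y) &= y - x_1 - x_2.
\end{align*}
Since $b_1+b_2 > 0$, the point $(b_1,\ldots,b_n,b_1+b_2)$ lies in $\Rr_{>0}^{n+1}$. Clearly $(\alpha,b_1,\ldots,b_n,b_1+b_2) \in V(g_1,\ldots,g_{n+1})$. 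Note also that $g_{n+1}$ is a rational polynomial in the variables, so it lies in $R_{1+n+1}$.

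The key step is the analogue of the claim in Lemma~\ref{division}. For $e_1,\ldots,e_{n+1} \in \Rr$, we show
\begin{equation*}
  J(g_1,\ldots,g_{n+1}) \cdot (1,e_1,\ldots,e_{n+1})^{\mathrm{T}} = 0 \iff (e_1,\ldots,e_{n+1}) = (d_1,\ldots,d_n,d_1+d_2).
\end{equation*}
The gradient of $g_{n+1}$ is $(0,-1,-1,0,\ldots,0,1)$. The first $n$ rows of $J(g_1,\ldots,g_{n+1})$ are the rows of $J(f_1,\ldots,f_n)$ padded with a zero in the $y$-column. So the system splits into two parts:
\begin{align*}
  J(f_1,\ldots,f_n)\cdot(1,e_1,\ldots,e_n)^{\mathrm{T}} &= 0, \\
  -e_1 - e_2 + e_{n+1} &= 0.
\end{align*}
By the definition of the derivative of an anchor (uniqueness, via Lemma~\ref{tfae-confusion}), the first part holds iff $(e_1,\ldots,e_n) = (d_1,\ldots,d_n)$. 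The second part then forces $e_{n+1} = d_1 + d_2$.

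The equivalence just established says that there is a unique vector solving condition (4) of Lemma~\ref{tfae-confusion} for the system $g_1,\ldots,g_{n+1}$ at $(\alpha,\bb,b_1+b_2)$. Applying Lemma~\ref{tfae-confusion}, (4) implies (1), so $(b_1,\ldots,b_n,b_1+b_2,g_1,\ldots,g_{n+1})$ is an anchor. Its derivative is that unique vector, namely $(d_1,\ldots,d_n,d_1+d_2)$.

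There is no real obstacle here. The only points to check are bookkeeping: the new function lies in $R_{1+n+1}$, and the new coordinate $b_1+b_2$ is positive, so that the anchor definition (which requires positive entries) applies.
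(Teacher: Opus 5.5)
Your proof is correct and follows the paper's approach: the paper also argues as in Lemma~\ref{division}, adding a dummy variable $y$ with the equation $x_i + x_j - y = 0$ (your sign convention makes no difference). Your Jacobian computation and the appeal to Lemma~\ref{tfae-confusion} correctly supply the details the paper leaves implicit.
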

\begin{proof}
  Similar to the previous lemma, adding a dummy variable $y$ and an
  equation $x_i + x_j - y = 0$.
\end{proof}
\begin{lemma} \label{power}
  Let $(b_1,\ldots,b_n,f_1,\ldots,f_n)$ be an anchor with derivative
  $(d_1,\ldots,d_n)$, and $i \le n$.  Then there is an anchor
  $(b_1,\ldots,b_n,b_i^\alpha,g_1,\ldots,g_{n+1})$ with derivative
  $(d_1,\ldots,d_n,\alpha b_i^{\alpha - 1} d_i)$.
\end{lemma}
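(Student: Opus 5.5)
We follow the pattern of Lemma~\ref{division}: add a dummy variable $y$ and one new equation whose regular solution set pins down $y=b_i^\alpha$.  Permuting coordinates, assume $i=1$.  Define $g_1,\ldots,g_{n+1}\in R_{1+n+1}$ by
\begin{align*}
  g_k(w,x_1,\ldots,x_n,y) &= f_k(w,x_1,\ldots,x_n) \text{ for } 1 \le k \le n, \\
  g_{n+1}(w,x_1,\ldots,x_n,y) &= y - x_1^\alpha.
\end{align*}
This is legitimate: $x_1^\alpha$ is one of the generators of $R_{1+n+1}$, and crucially the exponent $\alpha$ is built into the ring, not a coefficient depending on $w$.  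Thus $\partial g_{n+1}/\partial w = 0$, which is exactly the ``twisted'' behaviour we want.  Clearly $(\alpha,b_1,\ldots,b_n,b_1^\alpha)\in V(g_1,\ldots,g_{n+1})$, and $b_1^\alpha>0$, so the new tuple consists of positive reals.

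Next we prove the analogue of the claim in Lemma~\ref{division}.  For $e_1,\ldots,e_{n+1}\in\Rr$, the equation $J(g_1,\ldots,g_{n+1})\cdot(1,e_1,\ldots,e_{n+1})^{\mathrm{T}}=0$ splits into two parts.
\begin{itemize}
\item The first $n$ rows give $J(f_1,\ldots,f_n)\cdot(1,e_1,\ldots,e_n)^{\mathrm{T}}=0$, since the $g_k$ for $k\le n$ do not involve $y$.
\item The last row uses $\nabla g_{n+1}=(0,-\alpha x_1^{\alpha-1},0,\ldots,0,1)$, giving $-\alpha b_1^{\alpha-1}e_1+e_{n+1}=0$.
\end{itemize}
Since $A$ is an anchor with derivative $\bar d$, Lemma~\ref{tfae-confusion}(4) says the first part has the unique solution $(e_1,\ldots,e_n)=(d_1,\ldots,d_n)$.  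The last row then forces $e_{n+1}=\alpha b_1^{\alpha-1}d_1$.  So the system has exactly one solution, namely $(d_1,\ldots,d_n,\alpha b_1^{\alpha-1}d_1)$.

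Finally, by the equivalence (4)$\Rightarrow$(1) of Lemma~\ref{tfae-confusion} applied to $g_1,\ldots,g_{n+1}$ at the point $(\alpha,\bb,b_1^\alpha)$, the tuple $(b_1,\ldots,b_n,b_1^\alpha,g_1,\ldots,g_{n+1})$ is an anchor, and its derivative is the unique vector just computed.

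No step is a real obstacle.  The only point needing care is that the $w$-partial of the new equation vanishes; this holds because $\alpha$ appears only as an exponent, where it is fixed by the definition of $R_{1+n+1}$.
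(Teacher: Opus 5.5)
Your proposal is correct and is essentially the paper's proof: you adjoin a dummy variable $y$ with the equation $y - x_i^\alpha = 0$ (the paper writes $x_i^\alpha - y = 0$), the old rows of the Jacobian force $e_j = d_j$, the new row forces $e_{n+1} = \alpha b_i^{\alpha-1} d_i$, and Lemma~\ref{tfae-confusion} concludes. You also spell out the details the paper leaves as ``similar to the previous two lemmas,'' namely the vanishing $w$-partial of the new equation and the uniqueness argument.
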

\begin{proof}
  Similar to the previous two lemmas, adding a dummy variable $y$ and
  an equation $x_i^\alpha - y = 0$.  The new equation introduces the
  constraint
  \begin{equation*}
    \alpha b_i^{\alpha - 1} d_i - d_{n+1} = 0,
  \end{equation*}
  which ensures $d_{n+1} = \alpha b_i^{\alpha - 1} d_i$.
\end{proof}
\begin{lemma} \label{inner-concur}
  If $(b_1,\ldots,b_n,f_1,\ldots,f_n)$ is an anchor with derivative
  $(d_1,\ldots,d_n)$, and $b_i = b_j$, then $d_i = d_j$.
\end{lemma}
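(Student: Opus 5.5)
We reduce to Lemma~\ref{1-to-0} by adjoining a dummy variable for the quotient $b_i/b_j$. Since $b_i = b_j$, we may assume $i \ne j$ (the case $i=j$ is trivial).

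First, apply Lemma~\ref{division} to the anchor $(b_1,\ldots,b_n,f_1,\ldots,f_n)$. This produces an anchor
\begin{equation*}
  (b_1,\ldots,b_n,b_i/b_j,g_1,\ldots,g_{n+1})
\end{equation*}
whose derivative is $(d_1,\ldots,d_n,(b_jd_i - b_id_j)/b_j^2)$. The new coordinate $b_i/b_j$ equals $1$, because $b_i = b_j$.

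Next, apply Lemma~\ref{1-to-0} to this new anchor at the coordinate holding $b_i/b_j = 1$. It says the corresponding derivative entry vanishes:
\begin{equation*}
  \frac{b_jd_i - b_id_j}{b_j^2} = 0.
\end{equation*}
Since $b_j = b_i > 0$, this simplifies to $b_i(d_i - d_j) = 0$, and dividing by $b_i$ gives $d_i = d_j$.

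There is essentially no obstacle here. The genuine difficulty, namely the appeal to Proposition~\ref{lockdown} and hence to the Schanuel property, is already contained in Lemma~\ref{1-to-0}. All that remains is to check that Lemma~\ref{division} applies, which only needs $i \ne j$.
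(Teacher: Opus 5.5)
Your proposal is correct and follows essentially the same argument as the paper: adjoin $b_i/b_j$ via Lemma~\ref{division}, apply Lemma~\ref{1-to-0} to the new coordinate (which equals $1$), and cancel $b_i = b_j > 0$. You are also right that all the substantive content, the appeal to Proposition~\ref{lockdown}, sits inside Lemma~\ref{1-to-0}.
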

\begin{proof}
  We may assume $i \ne j$.  Then Lemma~\ref{division} gives an anchor
  $(b_1,\ldots,b_n,b_i/b_j,\ldots)$ with derivative
  $(d_1,\ldots,d_n,(b_jd_i - b_id_j)/b_j^2)$.  Since $b_i/b_j = 1$,
  Lemma~\ref{1-to-0} shows that $(b_jd_i - b_id_j)/b_j^2 = 0$, so
  $d_j/b_j = d_i/b_i$.  Since $b_j = b_i$, this implies $d_j = d_i$.
\end{proof}
\begin{lemma} \label{outer-concur}
  Let $A = (b_1,\ldots,b_n,f_1,\ldots,f_n)$ and $A^* =
  (b^*_1,\ldots,b^*_m,f^*_1,\ldots,f^*_m)$ be two anchors with
  derivatives $(d_1,\ldots,d_n)$ and $(d^*_1,\ldots,d^*_n)$,
  respectively.  If $b_i = b^*_j$, then $d_i = d^*_j$.
\end{lemma}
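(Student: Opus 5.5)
The plan is to reduce to Lemma~\ref{inner-concur} by concatenation.

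First, apply Remark~\ref{concatenanchor} to form the concatenated anchor $A \oplus A^*$. Its tuple is $(b_1,\ldots,b_n,b^*_1,\ldots,b^*_m)$, and its derivative is $(d_1,\ldots,d_n,d^*_1,\ldots,d^*_m)$. In this single anchor, the entries $b_i$ (at position $i$) and $b^*_j$ (at position $n+j$) are equal by hypothesis.

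Next, apply Lemma~\ref{inner-concur} to $A \oplus A^*$ with the indices $i$ and $n+j$. This gives that the corresponding derivative entries agree, so $d_i = d^*_j$.

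There is essentially no obstacle here. The substantive content, namely the appeal to Proposition~\ref{lockdown} and hence to the Bays--Kirby--Wilkie Schanuel property, is already packaged inside Lemma~\ref{1-to-0}, and Lemma~\ref{inner-concur} is built on that. The only point to check is that the derivative of $A \oplus A^*$ really is the concatenation of the two derivatives, and Remark~\ref{concatenanchor} verifies this directly using the block structure of the Jacobian.
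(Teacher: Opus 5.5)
Your proposal is correct and is essentially the paper's own proof: form $A \oplus A^*$ via Remark~\ref{concatenanchor}, whose derivative is the concatenation $(d_1,\ldots,d_n,d^*_1,\ldots,d^*_m)$, and then apply Lemma~\ref{inner-concur} to the positions $i$ and $n+j$. You are also right that the real content (Proposition~\ref{lockdown}, via Lemma~\ref{1-to-0}) is already contained in Lemma~\ref{inner-concur}.
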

\begin{proof}
  By Remark~\ref{concatenanchor}, there is an anchor $A \oplus A^* =
  (b_1,\ldots,b_n,b^*_1,\ldots,b^*_m,\ldots)$ with derivative
  $(d_1,\ldots,d_n,d^*_1,\ldots,d^*_m)$.  By the previous lemma, if
  $b_i = b^*_j$ then $d_i = d^*_j$.
\end{proof}
\begin{definition}
  If $a \in K_{>0}$, then $\delta a$ is the element of $K$ such that
  for any anchor $(b_1,\ldots,b_n,f_1,\ldots,f_n)$ with derivative
  $(d_1,\ldots,d_n)$,
  \begin{equation*}
    a = b_i \implies \delta a = d_i.
  \end{equation*}
\end{definition}
This is well-defined by Remark~\ref{all-anchored} and
Lemma~\ref{outer-concur}.  Remark~\ref{all-anchored} shows every
positive $a \in K$ has at least one anchor, and
Lemma~\ref{outer-concur} shows that any two anchors calculate the same
derivative for $a$.

So if $A = (b_1,\ldots,b_n,f_1,\ldots,f_n)$ is an anchor, then the
derivative of $A$ is necessarily $(\delta b_1, \ldots, \delta b_n)$.
\begin{example} \label{d-alpha}
  Let $f(w,x) = w-x$.  Then $f(\alpha,x) = \alpha - x$, and $\alpha
  \in V^{reg}(f(\alpha,-))$, so $(\alpha,f)$ is an anchor.  The
  Jacobian matrix $Jf$ is the $1 \times 2$ matrix $\begin{pmatrix} 1 &
    -1 \end{pmatrix}$, whose nullspace is spanned by $(1,1)^{\mathrm{T}}$.  By Lemma~\ref{tfae-confusion}, the
  derivative of the anchor $(\alpha,f)$ is $1$, and therefore \[\delta \alpha =
  1.\]
\end{example}
\begin{lemma} \label{basic}
  Let $b_1, b_2$ be two positive elements of $K$.
  \begin{enumerate}
  \item $\delta(b_1/b_2) = (b_2 \delta b_1 - b_1 \delta b_2)/b_2^2$.
  \item $\delta(b_1 + b_2) = \delta b_1 + \delta b_2$.
  \item $\delta(b_1^\alpha) = \alpha b_1^{\alpha - 1} \delta b_1$.
  \end{enumerate}
\end{lemma}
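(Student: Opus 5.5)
The plan is to derive all three identities from the anchor-extension lemmas (Lemmas~\ref{division}, \ref{add}, \ref{power}), using the well-definedness of $\delta$ to read off the answer. First I produce a single anchor that contains both $b_1$ and $b_2$. By Remark~\ref{all-anchored}, $b_1$ is anchored by some anchor $A$ and $b_2$ by some anchor $A^*$. By Remark~\ref{concatenanchor}, the concatenation $A \oplus A^*$ is an anchor whose entries include $b_1$ and $b_2$. After permuting coordinates, I may write it as $(c_1,\ldots,c_n,g_1,\ldots,g_n)$ with $c_1 = b_1$, $c_2 = b_2$ and $n \ge 2$. Here I take $b_1$ from $A$ and $b_2$ from $A^*$, so the indices $1$ and $2$ are distinct even if $b_1 = b_2$. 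By the definition of $\delta$, the derivative of this anchor is $(\delta c_1,\ldots,\delta c_n)$, so in particular $d_1 = \delta b_1$ and $d_2 = \delta b_2$.

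For (1), I apply Lemma~\ref{division} with $i=1$, $j=2$. This gives an anchor $(c_1,\ldots,c_n,b_1/b_2,\ldots)$ whose last derivative coordinate is $(b_2 d_1 - b_1 d_2)/b_2^2$. Since $b_1/b_2$ is a positive element anchored by this anchor, the definition of $\delta$ (well-defined by Lemma~\ref{outer-concur}) yields $\delta(b_1/b_2) = (b_2\delta b_1 - b_1 \delta b_2)/b_2^2$. Part (2) is identical, using Lemma~\ref{add} with $i=1$, $j=2$. The sum $b_1+b_2$ is positive, so $\delta(b_1+b_2)$ is defined. Part (3) needs only an anchor for $b_1$; I apply Lemma~\ref{power} with $i=1$ and read off $\delta(b_1^\alpha) = \alpha b_1^{\alpha-1}\delta b_1$, noting that $b_1^\alpha > 0$.

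The only real subtlety is that Lemmas~\ref{division} and \ref{add} require $i \ne j$. This is why I place $b_1$ and $b_2$ in distinct slots of the concatenated anchor rather than searching for a single anchor that happens to contain both. The remaining content, namely that the new anchors really are regular with the stated derivatives, is already supplied by those lemmas. The consistency of $\delta$ across different anchors is supplied by Lemma~\ref{outer-concur}. So I do not expect any genuine obstacle beyond this bookkeeping.
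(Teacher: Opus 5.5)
Your proposal is correct and follows essentially the same route as the paper: concatenate anchors for $b_1$ and $b_2$ via Remark~\ref{concatenanchor}, permute so they occupy the first two slots, and read off $\delta$ from the extended anchors given by Lemmas~\ref{division}, \ref{add}, and \ref{power}. Your observation that taking $b_1$ from $A$ and $b_2$ from $A^*$ guarantees distinct indices (needed for $i \ne j$ even when $b_1 = b_2$) is a small detail the paper leaves implicit.
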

\begin{proof}
  Let $A$ and $A^*$ anchor $b_1$ and $b_2$.  Moving to $A \oplus A^*$
  (Remark~\ref{concatenanchor}) we can assume $A = A^*$.  Permuting
  coordinates, we can assume $A =
  (b_1,b_2,b_3,\ldots,b_n,f_1,\ldots,f_n)$.  The derivative is
  $(\delta b_1, \ldots, \delta b_n)$.  By Lemma~\ref{division}, there
  is an anchor $(b_1,\ldots,b_n,b_1/b_2,\ldots)$ with derivative
  $(\delta b_1, \ldots, \delta b_n, (b_2 \delta b_1 - b_1 \delta
  b_2)/b_2^2)$.  Then $\delta (b_1 / b_2) = (b_2 \delta b_1 - b_1
  \delta b_2)/b_2^2$.  This proves (1).  The proofs of (2) and (3) are
  similar, using Lemmas~\ref{add} and \ref{power}.
\end{proof}
\begin{corollary} \label{div-to-mult}
  If $b_1, b_2 \in K_{>0}$, then $\delta (b_1 b_2) = b_1 \delta b_2 +
  b_2 \delta b_1$.
\end{corollary}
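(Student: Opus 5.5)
We will derive the product rule from the quotient rule in Lemma~\ref{basic}(1), applied to $b_1$ and $1/b_2$. The first step is to compute $\delta(1)$ and $\delta(1/b)$ for positive $b \in K$.

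For $\delta(1)$, take any positive $b \in K$ and apply Lemma~\ref{basic}(1) with $b_1 = b_2 = b$. This gives $\delta(1) = (b\,\delta b - b\,\delta b)/b^2 = 0$. The same conclusion follows directly from Lemma~\ref{1-to-0}: $1 \in K$ is anchored (for instance by $(1, f)$ with $f(w,x) = x - 1$), so its anchor derivative vanishes.

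Next, apply Lemma~\ref{basic}(1) with numerator $1$ and denominator $b_2$. Since $\delta(1) = 0$, this yields
\[
\delta(1/b_2) = \frac{b_2 \cdot 0 - 1\cdot \delta b_2}{b_2^2} = -\frac{\delta b_2}{b_2^2}.
\]
Here $1/b_2 \in K_{>0}$, because $K$ is a field and $b_2 > 0$.

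Finally, write $b_1 b_2 = b_1/(1/b_2)$ and apply Lemma~\ref{basic}(1) again, with $c = 1/b_2$:
\[
\delta(b_1 b_2) = \frac{c\,\delta b_1 - b_1 \delta c}{c^2} = \frac{b_2^{-1}\delta b_1 + b_1 b_2^{-2}\delta b_2}{b_2^{-2}} = b_2\,\delta b_1 + b_1\,\delta b_2.
\]

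No step is a genuine obstacle. The only point needing care is that every element fed into Lemma~\ref{basic} is a positive element of $K$, since $\delta$ is so far defined only on $K_{>0}$; the elements used are $1$, $b_2$, $1/b_2$ and $b_1$, all of which are positive.
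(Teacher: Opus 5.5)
Your proof is correct and is essentially the paper's argument. The paper notes that Lemma~\ref{basic}(1) says the logarithmic derivative $b \mapsto \delta b/b$ satisfies $f(b_1/b_2) = f(b_1) - f(b_2)$, hence is a homomorphism $(K_{>0},\cdot) \to (K,+)$; that implicitly runs through exactly your explicit steps $\delta 1 = 0$, $\delta(1/b_2) = -\delta b_2/b_2^2$, and $b_1 b_2 = b_1/(1/b_2)$.
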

\begin{proof}
  Let $f : K_{>0} \to K$ be the map $b \mapsto \delta b / b$.
  Lemma~\ref{basic}(1) says that
  \begin{equation*}
    f(b_1/b_2) = f(b_1) - f(b_2),
  \end{equation*}
  so $f$ is a homomorphism $(K_{>0},\cdot) \to (K,+)$.  Then
  $f(b_1b_2) = f(b_1) + f(b_2)$, which means $\delta(b_1b_2) = b_1
  \delta b_2 + b_2 \delta b_1$.
\end{proof}
By Lemma~\ref{basic}(2), $\delta$ extends to an additive homomorphism
\begin{equation*}
  \delta : (K,+) \to (K,+),
\end{equation*}
characterized by the fact that $\delta(b-c) = \delta(b) - \delta(c)$
for $b,c \in K_{>0}$.  Then the identity
\begin{gather*}
  \delta(xy) = x \delta y + y \delta x
\end{gather*}
continues to hold, because both sides are bilinear in $x$ and $y$.
Reversing the proof of Corollary~\ref{div-to-mult}, one sees that
$\delta(x/y) = (y \delta x - x \delta y)/y^2$ for $x,y \in K$ with $y
\ne 0$.
\begin{theorem} \label{the-key}
  There is a unique derivation $\delta : K \to K$ satisfying the
  constraints
  \begin{align*}
    \delta \alpha &= 1 \tag{$\ast$} \\
    \delta x^\alpha &= \alpha x^{\alpha - 1} \delta x \text{ for } x > 0. \tag{$\dag$}
  \end{align*}
\end{theorem}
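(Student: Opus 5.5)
Existence is essentially assembled from the preceding material, so the plan is to check that the map $\delta$ already constructed is a derivation with ($\ast$) and ($\dag$), and then prove uniqueness separately.

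\textbf{Existence.} I take $\delta$ as defined above on $K_{>0}$ via anchors, extended additively to $K$ by $\delta(b-c)=\delta b-\delta c$. I would first check that this extension is well defined. If $b-c=b'-c'$ with all four positive, then $b+c'=b'+c$, and Lemma~\ref{basic}(2) gives $\delta b+\delta c'=\delta b'+\delta c$. Every element of $K$ is such a difference (for example $x=(x+1)-1$ when $x>-1$, and similarly otherwise), so $\delta$ is an additive map $K\to K$.

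Next comes the Leibniz rule. On positive elements it is Corollary~\ref{div-to-mult}. For general $x=b-c$ and $y=b'-c'$, expand $xy$ bilinearly into products of positive elements and apply the positive case termwise; this is the bilinearity remark already made in the text. Hence $\delta$ is a derivation.

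For the constraints, ($\ast$) is Example~\ref{d-alpha}, and ($\dag$) is Lemma~\ref{basic}(3), since $x^\alpha>0$ for $x>0$.

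\textbf{Uniqueness.} Let $\partial$ be any derivation of $K$ satisfying ($\ast$) and ($\dag$), and let $a\in K_{>0}$. By Remark~\ref{all-anchored}, $a=b_1$ for some anchor $(b_1,\ldots,b_n,f_1,\ldots,f_n)$ with each $b_i\in K_{>0}$. Write $f_i$ as a Laurent polynomial with $\Qq$-coefficients in $w,x_j,x_j^\alpha$, and apply $\partial$ to the identity $f_i(\alpha,b_1,\ldots,b_n)=0$, which holds in $K$. The following facts let us compute $\partial$ of each term:
\begin{itemize}
\item $\partial$ kills $\Qq$;
\item $\partial$ satisfies Leibniz, including for inverses;
\item $\partial\alpha=1$;
\item $\partial(b_j^{\pm\alpha})=\pm\alpha b_j^{\pm\alpha-1}\partial b_j$, using ($\dag$) and the quotient rule.
\end{itemize}
The result is exactly
\[\frac{\partial f_i}{\partial w}(\alpha,\bb)+\sum_j \frac{\partial f_i}{\partial x_j}(\alpha,\bb)\,\partial b_j=0.\]
This is a formal chain-rule computation: the partial derivatives of the monomials in $w,x_j,x_j^{\alpha}$ match the formal derivation term by term. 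So $(\partial b_1,\ldots,\partial b_n)$ solves ($\ast$) of Lemma~\ref{tfae-confusion}. That solution is unique, so it equals the derivative of the anchor, and $\partial a=\delta a$. Additivity then gives $\partial=\delta$ on all of $K$.

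\textbf{Main obstacle.} Essentially none remains, since consistency of $\delta$ was already handled in Lemma~\ref{outer-concur}. The only care point is the formal chain-rule step in the uniqueness argument. There one must verify that $\frac{\partial}{\partial x_j}(x_j^{\alpha})=\alpha x_j^{\alpha-1}$ matches ($\dag$), while the $w$-derivative accounts exactly for the coefficient occurrences of $\alpha$, which is what ($\ast$) supplies.
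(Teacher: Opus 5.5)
Your proposal is correct and follows the paper's proof essentially step for step: existence comes from Example~\ref{d-alpha}, Lemma~\ref{basic}, Corollary~\ref{div-to-mult} and the additive/bilinear extension (your explicit well-definedness check for $\delta(b-c)=\delta b-\delta c$ is a welcome addition), and uniqueness applies the formal chain rule to $f_i(\alpha,\bb)=0$ for an anchor and then uses uniqueness of the anchor's derivative from Lemma~\ref{tfae-confusion}. One minor precision: elements of $R_{n+1}$ may also involve $w^{\pm\alpha}$, but this is harmless, because ($\dag$) at $x=\alpha$ together with ($\ast$) gives $\partial(\alpha^{\pm\alpha})=\pm\alpha^{\pm\alpha}$, which is exactly the corresponding contribution to $\frac{\partial f_i}{\partial w}(\alpha,\bb)$ (and in any case the anchor from Remark~\ref{all-anchored} can be chosen with each $f_i$ polynomial in $w$).
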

\begin{proof}
  We have just constructed such a derivation $\delta$.  Indeed,
  $\delta \alpha = 1$ by Example~\ref{d-alpha}, and $\delta x^\alpha =
  \alpha x^{\alpha - 1} \delta x$ by Lemma~\ref{basic}(3).

  Conversely, suppose $\delta'$ is another derivation satisfying the
  two constraints ($\ast$) and ($\dag$).  First note that if
  $f(x_1,\ldots,x_n) \in R_n$, then
  \begin{equation*}
    \delta' (f(b_1,\ldots,b_n)) = \sum_{i=1}^n \frac{\partial
      f}{\partial x_i}(b_1,\ldots,b_n) \cdot \delta' b_i, \tag{$\ddag$}
  \end{equation*}
  by induction on the complexity of $f$ using property ($\dag$).

  Given $b \in K$, we will show that $\delta' b = \delta b$.  We may
  assume $b > 0$.  Take an anchor $(b_1,\ldots,b_n,f_1,\ldots,f_n)$
  with $b = b_i$ (possible by Remark~\ref{all-anchored}).  Then
  \begin{equation*}
    (\alpha,b_1,\ldots,b_n) \in V^{reg}(f_1,\ldots,f_n).
  \end{equation*}
  Since $f_i(\alpha,b_1,\ldots,b_n) = 0$, equation ($\ddag$) shows that
  \begin{equation*}
    \frac{\partial f_i}{\partial w}(\alpha,b_1,\ldots,b_n) \cdot \delta'
    \alpha + \sum_{j=1}^n \frac{\partial f_i}{\partial x_j}(\alpha,b_1,\ldots,b_n) \cdot \delta' b_j \text{ for } 1 \le i \le n.
  \end{equation*}
  By property ($\ast$), $\delta' \alpha = 1$, and we can rewrite this
  as
  \begin{equation*}
    \frac{\partial f_i}{\partial w}(\alpha,b_1,\ldots,b_n) \cdot \delta'
    \alpha + \sum_{j=1}^n \frac{\partial f_i}{\partial x_j}(\alpha,b_1,\ldots,b_n) \cdot \delta' b_j \text{ for } 1 \le i \le n.    
  \end{equation*}
  This precisely means that $(\delta' b_1,\ldots, \delta' b_n)$ is the
  derivative of the anchor $(b_1,\ldots,b_n,f_1,\ldots,f_n)$.  This
  derivative is $(\delta b_1, \ldots, \delta b_n)$, so $\delta' b_i = \delta b_i$ and in particular $\delta' b =
  \delta b$.
\end{proof}
Henceforth, let $\delta$ be the unique derivation in
Theorem~\ref{the-key}.
\begin{remark} \label{clear-derivatives}
  As noted in the proof of Theorem~\ref{the-key}, for any
  $f(x_1,\ldots,x_n) \in R_n$ we have
  \begin{equation*}
    \delta f(a_1,\ldots,a_n) = \sum_{i=1}^n \frac{\partial f}{\partial
      x_i}(\ba) \cdot \delta a_i.
  \end{equation*}
  This fails for $R_n[\alpha]$, though: the constant function $f(x) =
  \alpha$ has
  \begin{equation*}
    \delta f(a) = \delta \alpha = 1 \ne 0 = f'(a) \cdot \delta a.
  \end{equation*}
\end{remark}
\begin{definition} \label{def-good}
  Let $D \subseteq \Rr^n$ and $f : D \to \Rr$ be $K$-definable
  (equivalently, 0-definable) in the structure $\Rr_\alpha$.  Then
  $(f,D)$ is \emph{good} if there is a $K$-definable function $g : D
  \times \Rr^n \to \Rr$ such that
  \begin{equation*}
    \delta f(b_1,\ldots,b_n) = g(b_1,\ldots,b_n,\delta b_1, \ldots,
    \delta b_n) \text{ for any } \bar{b} \in D(K).
  \end{equation*}
\end{definition}
\begin{remark} \label{good-cover}
  Suppose we have $D_1, D_2 \subseteq \Rr^n$ and $f : D_1 \cup D_2 \to
  \Rr$ are $K$-definable.  If $(f,D_i)$ is good for $i=1,2$, then
  $(f,D)$ is good.
\end{remark}
\begin{proof}
  There are $g_i : D_i \times \Rr^n \to \Rr$
  such that
  \begin{equation*}
    \delta f(\bb) = g_i(\bb,\delta \bb) \text{ if } \bb \in D_i(K).
  \end{equation*}
  Then we can take
  \begin{equation*}
    g(\bx,\by) = 
    \begin{cases}
      g_1(\bx,\by) & \text{ if } \bx \in D_1 \\
      g_2(\bx,\by) & \text{ if } \bx \in D_2 \setminus D_1
    \end{cases}
  \end{equation*}
  to arrange $\delta f(\bb) = g(\bb,\delta \bb)$.
\end{proof}
\begin{remark} \label{cell-project}
  Let $C \subseteq \Rr^n$ be a $k$-dimensional $K$-definable cell and
  $f : C \to \Rr$ be a $K$-definable function.  Let $\pi : \Rr^n \to
  \Rr^k$ be the coordinate projection such that $\pi(C)$ is an open
  cell in $\Rr^k$ and $C \to \pi(C)$ is a homeomorphism.  Let $f^* :
  \pi(C) \to \Rr$ be the $K$-definable function making the diagram
  commute:
  \begin{equation*}
    \xymatrix{C \ar[dr]^f \ar[d]_\pi & \\ \pi(C) \ar@{-->}[r]_{f^*} & \Rr}
  \end{equation*}
  If $(f^*,\pi(C))$ is good, then $(f,C)$ is good.
\end{remark}
\begin{proof}
  There is a $K$-definable function $g^*$ such that
  \begin{equation*}
    \delta f^*(\bx) = g^*(\bx, \delta \bx).
  \end{equation*}
  Let $g(\bx,\by) = g^*(\pi(\bx),\pi(\by))$.  Then
  \begin{equation*}
    \delta f(\bb) = \delta f^*(\pi(\bb)) = g^*(\pi(\bb),\delta \pi(\bb)) =
    g^*(\pi(\bb),\pi(\delta(\bb))) = g(\bb,\delta \bb). \qedhere
  \end{equation*}
\end{proof}
\begin{lemma} \label{implicit-case}
  Let $D \subseteq (\Rr_{>0})^n$ be open and $f : D \to \Rr_{>0}$ be a
  function, with both $f, D$ being $K$-definable.  If $f$ is
  implicitly defined in the sense of Definition~\ref{implicit}, then
  $(f,D)$ is good.
\end{lemma}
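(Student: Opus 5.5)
The idea is to differentiate the implicit system formally and solve for the derivatives. Since $f$ is implicitly defined, there are $0$-definable functions $f_2,\ldots,f_m : D \to \Rr_{>0}$ and $g_1,\ldots,g_m \in R_{n+m}[\alpha]$ such that for every $\ba \in D$,
\[(f(\ba),f_2(\ba),\ldots,f_m(\ba)) \in V^{reg}(g_1(\ba,-),\ldots,g_m(\ba,-)).\]
Write each $g_i$ as $h_i(\alpha,\bx,\by)$ with $h_i \in R_{1+n+m}$, where $\bx=(x_1,\ldots,x_n)$ and $\by=(y_1,\ldots,y_m)$. Fix $\bb \in D(K)$, and put $c_1 = f(\bb)$ and $c_k = f_k(\bb)$ for $k \ge 2$. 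All the $c_k$ lie in $K_{>0}$ because $f,f_k$ are $K$-definable. Since $h_i(\alpha,\bb,\bc) = 0$, I apply $\delta$ to this identity. Remark~\ref{clear-derivatives} applies to $h_i \in R_{1+n+m}$ evaluated at the positive $K$-tuple $(\alpha,\bb,\bc)$. Together with $\delta\alpha = 1$, it gives for each $i$
\[
\frac{\partial h_i}{\partial w}(\alpha,\bb,\bc) + \sum_{j=1}^n \frac{\partial h_i}{\partial x_j}(\alpha,\bb,\bc)\,\delta b_j + \sum_{k=1}^m \frac{\partial h_i}{\partial y_k}(\alpha,\bb,\bc)\,\delta c_k = 0.
\]
A small point needs care here: the $b_j$ must be positive for Remark~\ref{clear-derivatives} to apply, and they are, since $D \subseteq (\Rr_{>0})^n$.

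Next I solve the linear system. The $m\times m$ matrix $\bigl(\partial h_i/\partial y_k(\alpha,\bb,\bc)\bigr)$ is exactly $J(g_1(\bb,-),\ldots,g_m(\bb,-))$ evaluated at $\bc$. It is invertible by regularity. Hence
\[(\delta c_1,\ldots,\delta c_m)^{\mathrm{T}} = -J_{\by}^{-1}\bigl(\partial_w h + J_{\bx}\cdot(\delta b_1,\ldots,\delta b_n)^{\mathrm{T}}\bigr),\]
with all entries evaluated at $(\alpha,\bb,\bc)$.

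Finally, I define $g : D \times \Rr^n \to \Rr$ by taking the first coordinate of this formula, with $\bb$ replaced by a variable $\bx \in D$, $\bc$ replaced by $(f(\bx),f_2(\bx),\ldots,f_m(\bx))$, and $\delta\bb$ replaced by a variable $\bar{z}$. Concretely,
\[g(\bx,\bar{z}) = \text{first coordinate of } -J_{\by}(\alpha,\bx,F(\bx))^{-1}\bigl(\partial_w h(\alpha,\bx,F(\bx)) + J_{\bx}(\alpha,\bx,F(\bx))\,\bar{z}^{\mathrm{T}}\bigr),\]
where $F = (f,f_2,\ldots,f_m)$. This $g$ is $K$-definable. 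The partial derivatives of the $h_i$ lie in $R_{1+n+m}[\alpha]$, so they are $0$-definable in $\Rr_\alpha$; the $f_k$ are $0$-definable; and matrix inversion is semialgebraic, well defined on all of $D$ by regularity. By construction, $\delta f(\bb) = \delta c_1 = g(\bb,\delta\bb)$ for every $\bb \in D(K)$, so $(f,D)$ is good.

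I expect no real obstacle. The one step needing care is justifying the formal differentiation of $h_i(\alpha,\bb,\bc)=0$. This rests on Remark~\ref{clear-derivatives}, which holds for $R_{1+n+m}$ but not for $R_{n+m}[\alpha]$. So I must first rewrite the $g_i$ as $h_i(\alpha,-)$ and treat $\alpha$ as the extra variable $w$, whose $\delta$ is $1$. This is precisely the twisted behaviour built into $\delta$.
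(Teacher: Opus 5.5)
Your proposal is correct and is essentially the paper's own argument. You rewrite each $g_i \in R_{n+m}[\alpha]$ as $h_i(\alpha,-)$ with $h_i \in R_{1+n+m}$, apply $\delta$ to $h_i(\alpha,\bb,f(\bb),f_2(\bb),\ldots,f_m(\bb))=0$ via Remark~\ref{clear-derivatives} and $\delta\alpha=1$, then invert the regular Jacobian in the $\by$-variables to get $\delta f(\bb)$ as a $K$-definable function of $\bb$ and $\delta\bb$, and you are slightly more explicit than the paper in writing out $g$ and checking positivity of the evaluation point.
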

\begin{proof}
  By definition, there are functions $f_2,\ldots,f_m : D \to \Rr_{>0}$
  and functions $g^0_1,\ldots,g^0_m \in R_{n+m}[\alpha]$ such that
  \begin{equation*}
    (f(\ba),f_2(\ba),\ldots,f_m(\ba)) \in
    V^{reg}(g^0_1(\ba,-),\ldots,g^0_m(\ba,-))
  \end{equation*}
  for any $\ba \in U$.  Write $g^0_i(x_1,\ldots,x_n,y_1,\ldots,y_m)$
  as $g_i(\alpha,\bx,\by)$ for some $g_i(w,\bx,\by) \in R_{1+n+m}$.
  So then
  \begin{equation*}
    (f(\ba),\ldots,f_m(\ba)) \in V^{reg}(g_1(\alpha,\ba,-),\ldots,g_m(\alpha,\ba,-)) \text{ for all } \ba \in U.
  \end{equation*}
  For each $\ba \in U$ and $i \le m$, we have
  \begin{equation*}
    g_i(\alpha,\ba,f(\ba),f_2(\ba),\ldots,f_m(\ba)) = 0.
  \end{equation*}
  Applying $\delta$ to both sides and using
  Remark~\ref{clear-derivatives}, we see that
  \begin{equation*}
    \frac{\partial g_i}{\partial w}(\alpha,\ba,\bar{f}(\ba)) + \sum_{j=1}^n \frac{\partial g_i}{\partial x_j}(\alpha,\ba,\bar{f}(\ba)) \cdot \delta a_j + \sum_{j=1}^m \frac{\partial g_i}{\partial y_j}(\alpha,\ba,\bar{f}(\ba)) \cdot \delta (f_j(\ba)) \text{ for all } i \le m. \tag{$\ast$}
  \end{equation*}
  Since the matrix $\left(\frac{\partial g_i}{\partial
    y_j}\right)_{i,j \le m}$ is invertible at
  $(\alpha,\ba,\bar{f}(\ba))$, the system of equations ($\ast$) can be
  solved to calculate the value of $\delta(\bar{f}(\ba))$ as an
  algebraic function of the values
  \begin{gather*}
    \frac{\partial g_i}{\partial w}(\alpha,\ba,\bar{f}(\ba)) \text{ and } \frac{\partial g_i}{\partial x_j}(\alpha,\ba,\bar{f}(\ba)) \text{ and } \frac{\partial g_i}{\partial y_j}(\alpha,\ba,\bar{f}(\ba)) \text{ and } \delta \ba.
  \end{gather*}
  So then $\delta \bar{f}(\ba)$ is given by a $K$-definable function
  of $\ba$ and $\delta \ba$, and $f$ is good.
\end{proof}
\begin{theorem} \label{definability}
  Let $D \subseteq K^n$ and $f : D \to K$ be definable in the
  structure $K = (K,+,\cdot,\alpha,x^\alpha)$.  Then there is a
  definable function $g : D \times K^n \to K$ such that
  \begin{equation*}
    \delta f(b_1,\ldots,b_n) = g(b_1,\ldots,b_n, \delta b_1, \ldots,
    \delta b_n) \text{ for any } \bar{b} \in D.
  \end{equation*}
\end{theorem}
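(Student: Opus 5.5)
We prove the theorem by induction on $n$, in the form: for every $K$-definable $D \subseteq \Rr^n$ and $K$-definable $f : D \to \Rr$, the pair $(f,D)$ is good in the sense of Definition~\ref{def-good}. The statement in $K$ then follows by restricting to $K$-points, since $K \preceq \Rr_\alpha$ and $K$-definable sets in $K$ correspond to $0$-definable sets in $\Rr_\alpha$. The case $n=0$ is trivial, because then $f$ is a constant $c \in K$ and we may take $g$ to be the constant function $\delta c$.

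For the inductive step, we take a $K$-definable cell decomposition of $D$. By Remark~\ref{good-cover}, applied finitely often, it suffices to treat $f$ on each cell $C$. If $\dim C = k < n$, Remark~\ref{cell-project} reduces goodness of $(f,C)$ to goodness of $(f^*,\pi(C))$ with $\pi(C) \subseteq \Rr^k$ open, which holds by induction on $n$. This leaves $f$ on an open $K$-definable set $U \subseteq \Rr^n$.

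We next reduce to positive inputs and outputs using the derivation identities. Choose the cells so that each coordinate has constant sign, so every coordinate is either identically $0$, positive, or negative. For a coordinate that is identically $0$, the cell is not open, so this case is already handled. On an open cell, replace $x_i$ by $-x_i$ where needed; since $\delta(-x)=-\delta x$, composing with a sign change preserves goodness. We may therefore assume $U \subseteq (\Rr_{>0})^n$. Split $U$ further by the sign of $f$. Where $f=0$, take $g=0$. Where $f<0$, pass to $-f$. So we may assume $f : U \to \Rr_{>0}$.

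Now Corollary~\ref{cor2} gives $K$-definable $U_1,\ldots,U_m \subseteq U$ on which $f$ is implicitly defined, with $\dim(U \setminus \bigcup U_i) < n$. We may shrink each $U_i$ to its interior, since its boundary is lower dimensional. Then Lemma~\ref{implicit-case} shows that $(f,U_i)$ is good. The leftover set $E = U \setminus \bigcup_i \operatorname{int}(U_i)$ has dimension less than $n$. Decomposing $E$ into cells of dimension less than $n$ and applying Remark~\ref{cell-project} together with the induction hypothesis shows that $(f,E)$ is good. Remark~\ref{good-cover} then assembles the pieces.

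The main point to check is that the induction is set up correctly. Remark~\ref{cell-project} passes to a function on an open subset of $\Rr^k$ with $k<n$, which is exactly what the induction hypothesis covers. We must also confirm that Corollary~\ref{cor2} really applies with $K$-definable data; this holds because $K$-definable means $0$-definable in $\Rr_\alpha$. A second small check is that Lemma~\ref{implicit-case} requires openness of the domain, which is why we take interiors. Everything else is bookkeeping.
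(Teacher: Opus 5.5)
Your proof is correct and follows essentially the same route as the paper's: cell decomposition, projection of lower-dimensional cells via Remark~\ref{cell-project}, reduction to positive coordinates and positive values, then Corollary~\ref{cor2} with Lemma~\ref{implicit-case}, and induction for the lower-dimensional leftover. The only structural difference is that you induct on $n$ alone and handle the leftover by cell-decomposing and projecting it, whereas the paper inducts on $\dim(D)+n$; there is also one small ordering slip, since splitting an open cell by the sign of $f$ can destroy the openness that Corollary~\ref{cor2} needs, so you should choose the cell decomposition compatible with $\{f>0\}$, $\{f=0\}$, $\{f<0\}$ as you already do for the coordinate signs (the paper avoids this by splitting by the sign of $f$ before decomposing).
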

\begin{proof}
  We must show that $(f,D)$ is good in the sense of
  Definition~\ref{def-good}.  Proceed by induction on $\dim(D) + n$.
  Let
  \begin{gather*}
    D^+ = \{a \in D : f(a) > 0\} \\
    D^0 = \{a \in D : f(a) = 0\} \\
    D^- = \{a \in D : f(a) < 0\}.
  \end{gather*}
  By Remark~\ref{good-cover}, it suffices to deal with the restriction
  of $f$ to each of the sets $D^+$ and $D^0$ and $D^-$.  On $D^0$, $f$
  is the zero function, which is easily good (take $g = 0$).  So we
  may assume $f > 0$ or $f < 0$.  Replacing $f$ with $-f$ (which
  doesn't change goodness), we may assume $f > 0$ on $D$.

  For $a \in \Rr$, let $\sgn(a)$ be the element of $\{1,-1,0\}$ with
  the same sign as $a$.  Let $\approx$ be the equivalence relation on
  $\Rr^n$ given by
  \begin{equation*}
    \ba = \bb \iff (\sgn(a_1),\ldots,\sgn(a_n)) =
    (\sgn(b_1),\ldots,\sgn(b_n)),
  \end{equation*}
  with $3^n$ classes.  Partitioning $D$, we may assume that $D$ is a
  subset of one of the $\approx$-classes, meaning that
  $(\sgn(a_1),\ldots,\sgn(a_n))$ is constant as $\ba$ ranges over $D$.
  Changing coordinates, we may assume that $D \subseteq \Rr_{\ge
    0}^n$.

  Take a $K$-definable cell decomposition $D = \bigcup_{i=1}^n C_i$.
  By yet another application of Remark~\ref{good-cover}, it suffices
  to show that $f$ is good on each $C_i$, so we may assume that $D$ is
  a cell.  If $D$ is a non-open cell, then there is a coordinate
  projection $\pi : \Rr^n \to \Rr^k$ with $k < n$, such that $\pi(D)$
  is an open cell and $\pi : D \to \pi(D)$ is a homeomorphism.  Then
  $f : D \to \Rr$ factors as $D \stackrel{\pi}{\to} \pi(D)
  \stackrel{f^*}{\to} \Rr$, and $(f^*,\pi(D))$ is good by induction,
  so $f$ is good by Remark~\ref{cell-project}.

  Therefore, we may assume that $D$ is an open cell in $\Rr^n$.  Since
  $D \subseteq (\Rr_{\ge 0})^n$, $D$ must intersect $\Rr_{>0}^n$.  But
  $\Rr_{>0}^n$ is one of the $\approx$-classes, and $D$ lives in a
  single $\approx$-class, so $D \subseteq \Rr_{>0}$.  Then
  Corollary~\ref{cor2} gives 0-definable open sets $U_1,\ldots,U_m
  \subseteq D$ such that
  \begin{itemize}
  \item $f \restriction U_i$ is implicitly defined.
  \item The set $D_0 = D \setminus \bigcup_{i=1}^m U_i$ has $\dim(D_0)
    < n$.
  \end{itemize}
  By Lemma~\ref{implicit-case}, $f \restriction U_i$ is good, and by
  induction, $f \restriction D_0$ is good.  Since $U = D_0 \cup
  \bigcup_{i=1}^m U_i$, we see by Remark~\ref{good-cover} that $f$ is
  good.
\end{proof}

\section{The counterexample} \label{easy-part}
As in the previous two sections, let $\alpha \in \Rr_{>0}$ be exponentiall
transcendental, let $K$ be the prime model of $\Rr_\alpha$, and let
$\delta : K \to K$ be the derivation in Theorems~\ref{the-key} and
\ref{definability}.  Because $K = \dcl(\varnothing)$, ``definable''
and ``0-definable'' are equivalent in the structure $K$.

Let $M$ be the field of constants
\begin{equation*}
  M = \{x \in K : \delta x = 0\}.
\end{equation*}
\begin{remark} \label{6-1}
  If $x \in M_{>0}$, then $x^\alpha \in M$, because $\delta (x^\alpha)
  = \alpha x^{\alpha - 1} \delta x = 0$.  On the other hand, $\alpha
  \notin M$ because $\delta \alpha = 1 \ne 0$.
\end{remark}
\begin{remark} \label{dense-obvious}
  $M$ is dense in $K$, because $\Qq \subseteq M \subseteq K \subseteq
  \Rr$.
\end{remark}
\begin{lemma} \label{qe-for-induced}
  Let $D \subseteq K^{n+1}$ be 0-definable.  Let $\pi : K^{n+1} \to
  K^n$ be the coordinate projection onto the first $n$ coordinates.
  Then there is a 0-definable set $D' \subseteq K^n$ such that
  \begin{equation*}
    \pi(D \cap M^{n+1}) = D' \cap M^n.
  \end{equation*}
\end{lemma}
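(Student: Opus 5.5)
We use Theorem~\ref{definability} to rewrite the condition ``the fiber contains a point of $M$'' as a first-order condition over $K$. Write points of $K^{n+1}$ as $(\ba,c)$ with $\ba \in K^n$. For $\ba \in M^n$ we have $\delta \ba = 0$, and we must decide, definably in $\ba$, whether the fiber $D_\ba = \{c \in K : (\ba,c) \in D\}$ meets $M$.

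\textbf{Step 1: reduce to definable Skolem functions.} Since $K$ is o-minimal, $D_\ba$ is a finite union of points and open intervals, uniformly in $\ba$. Apply cell decomposition to $D$ and treat each cell $C$ separately; $\pi(C\cap M^{n+1})$ is then a finite union of pieces.

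\emph{Open intervals.} Suppose $C$ is a band $\{(\ba,c) : \ba \in \pi(C),\ g_1(\ba) < c < g_2(\ba)\}$. Each nonempty fiber is an open interval, and $M$ is dense in $K$ (Remark~\ref{dense-obvious}), so the fiber meets $M$ exactly when it is nonempty. Hence $\pi(C \cap M^{n+1}) = \pi(C) \cap M^n$, and $\pi(C)$ is 0-definable.

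\emph{Single points.} Suppose $C$ is a graph $\{(\ba,h(\ba)) : \ba \in \pi(C)\}$ for a 0-definable function $h$. For $\ba \in \pi(C)\cap M^n$ we need a definable test for $h(\ba) \in M$, that is, for $\delta h(\ba) = 0$. By Theorem~\ref{definability} there is a 0-definable $g$ with $\delta h(\ba) = g(\ba,\delta\ba)$. When $\ba \in M^n$ this equals $g(\ba,\bar 0)$. Set
\[D_C' = \{\ba \in \pi(C) : g(\ba,\bar 0) = 0\},\]
which is 0-definable. Then $\pi(C\cap M^{n+1}) = D_C' \cap M^n$.

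\textbf{Step 2: combine.} Take $D'$ to be the union of the sets $D_C'$ (using $\pi(C)$ itself for the band cells). This gives $\pi(D\cap M^{n+1}) = D'\cap M^n$, because projection commutes with finite unions.

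The only substantive step is the graph case, and it rests entirely on Theorem~\ref{definability}. Two small points need checking. First, the theorem applies to functions on arbitrary definable domains, so it covers $h$ on the non-open cell $\pi(C)$. Second, $\delta$ of a vector is computed coordinatewise, so $\delta\ba = \bar 0$ for $\ba \in M^n$.
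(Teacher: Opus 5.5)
Your proof is correct and is essentially the paper's argument. Both decompose $D$ into cells, use density of $M$ in $K$ to handle the band cells, and for graph cells apply Theorem~\ref{definability} together with $\delta\ba = \bar 0$ on $M^n$ to obtain $D' = \{\ba \in \pi(C) : g(\ba,\bar 0) = 0\}$.
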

\begin{proof}
  We can write $D$ as a union of $K$-definable cells $C_i$, and
  thereby assume that $D$ is a cell.  There are two possibilities:
  \begin{enumerate}
  \item $D$ has the form $(f,g)_C = \{(x,y) \in K^n \times K : x \in
    C, f(x) < y < g(x)\}$ for some cell $C \subseteq K^n$ and
    continuous definable functions $f,g : C \to K \cup \{\pm \infty\}$
    such that $f<g$ on $C$.  In this case,
    \begin{align*}
      \pi(D \cap M^{n+1}) &= \{a \in C \cap M^n \mid \exists b \in K : f(a) < b < g(a)\} \\
      &= C \cap M^n,
    \end{align*}
    where the second equality holds because $M$ is dense in $K$
    (Remark~\ref{dense-obvious}.  So we can take $D' = C$.
  \item $D$ has the form $\Gamma(f)_C = \{(x,y) \in K^n \times K : x
    \in C, ~ y = f(x)\}$ for some cell $C \subseteq K^n$ and
    continuous definable function $f : C \to K$.  By
    Theorem~\ref{definability}, there is a definable function $g : C
    \times K^n \to K$ such that $\delta f(\ba) = g(\ba,\delta \ba)$
    for $\ba \in C$.  Then
    \begin{align*}
      \pi(D \cap M^{n+1}) &= \{a \in C \cap M^n \mid f(a) \in M \} \\
      &= \{a \in C \cap M^n \mid \delta f(a) = 0 \} \\
      &= \{a \in C \cap M^n \mid g(a,\delta a) = 0 \} \\
      &= \{a \in C \cap M^n \mid g(a,0) = 0\},
    \end{align*}
    because $a \in M^n \implies \delta a = 0$.  So we can take $D' =
    \{a \in C : g(a,0) = 0\}$.  \qedhere
  \end{enumerate}
\end{proof}
\begin{remark} \label{wom-theory}
  An $\Ll$-structure $\mathcal{M}$ has weakly o-minimal theory if and
  only if the following holds: for any $\Ll$-formula
  $\phi(x_1,\ldots,x_n,y)$, there is a bound $m_\phi$ depending on
  $\phi$ such that there do not exist
  $a_1,\ldots,a_n,b_1,\ldots,b_{m_\phi}$ with
  \begin{gather*}
    b_1 < \cdots < b_{m_\phi} \\
    \mathcal{M} \models \phi(a_1,\ldots,a_n,b_i) \iff i \equiv 0 \pmod{2}.
  \end{gather*}
\end{remark}
\begin{corollary} \label{induced-cor}
  Let $\mathcal{M}$ be $M$ with the 0-definable induced structure as a
  subset of $K$.
  \begin{enumerate}
  \item $\mathcal{M}$ has quantifier elimination, and in fact the
    0-definable sets in $\mathcal{M}$ all have the form $D \cap M^n$
    for some definable $D \subseteq K^n$.
  \item $\Th(\mathcal{M})$ is weakly o-minimal.
  \end{enumerate}
\end{corollary}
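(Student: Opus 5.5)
For part (1), I will describe the $0$-definable sets of $\mathcal{M}$ directly. By definition of the induced structure, the atomic (indeed basic) $0$-definable sets of $\mathcal{M}$ are the traces $D \cap M^n$ with $D \subseteq K^n$ $0$-definable in $K$. Equivalently, $K$ is $\dcl(\varnothing)$, so these are the traces of arbitrary $K$-definable sets. I will show by induction on formulas that every $0$-definable set of $\mathcal{M}$ has this form; this yields quantifier elimination in the language with a predicate for each trace.

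The class of traces is closed under the Boolean operations. For unions and intersections this is immediate. For complements, $M^n \setminus (D\cap M^n) = (K^n\setminus D)\cap M^n$. Closure under existential quantification is exactly Lemma~\ref{qe-for-induced}. Projecting along one coordinate suffices, after permuting coordinates by a $0$-definable permutation of $K^{n+1}$. Existential quantification in $\mathcal{M}$ over a set $X = D\cap M^{n+1}$ gives $\pi(X) = \pi(D\cap M^{n+1}) = D'\cap M^n$. So the induction goes through and (1) follows.

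For part (2), I will use the criterion of Remark~\ref{wom-theory}. Take a formula $\phi(\bx,y)$ in the language of $\mathcal{M}$. By (1), $\phi(\mathcal{M}) = D\cap M^{n+1}$ for some $0$-definable $D\subseteq K^{n+1}$. Since $K$ is o-minimal (as $K \prec \Rr_\alpha$), uniform finiteness gives a bound $N$ such that each fiber $D_{\ba}$, for $\ba\in K^n$, is a union of at most $N$ points and intervals. For $\ba\in M^n$ we have $\phi(\ba,\mathcal{M}) = D_{\ba}\cap M$. The trace on $M$ of a union of at most $N$ convex subsets of $K$ is a union of at most $N$ convex subsets of $M$.

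Hence an alternating sequence $b_1<\dots<b_m$ in $M$ with $\phi(\ba,b_i)$ holding iff $i$ is even forces $m \le 2N+1$. Setting $m_\phi = 2N+2$ therefore works.

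The only point needing care is that Remark~\ref{wom-theory} is a statement about the theory. The bound obtained is uniform in the parameters, so it is expressible by a first-order sentence for each $\phi$, and it transfers to all models of $\Th(\mathcal{M})$. This is exactly what the remark encodes, so no further work is needed.

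I expect no real obstacle here: all the difficulty was already absorbed by Lemma~\ref{qe-for-induced}, via Theorem~\ref{definability}. The remaining check is that the definitions of "induced structure" and "quantifier elimination" match. I will treat the traces of $0$-definable subsets of $K^n$ as the basic relations, and read (1) as saying that every formula is equivalent to one of these relations.
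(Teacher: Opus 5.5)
Your proposal is correct and follows the paper's proof. Part (1) is Boolean closure of the traces $D \cap M^n$ together with Lemma~\ref{qe-for-induced} for projections, and part (2) transfers a uniform o-minimal bound from $K$ to $M$ via the criterion of Remark~\ref{wom-theory}. The only cosmetic difference is that you restrict the uniform convex decomposition of the fibers $D_{\ba}$ to $M$ and count alternations there, whereas the paper bounds alternating sequences directly in $K$ and notes that any alternating sequence in $M$ is already one in $K$.
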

\begin{proof}
  \begin{enumerate}
  \item The family of sets of the form $D \cap M^n$ is obviously
    closed under boolean operations, and it's closed under projections
    by Lemma~\ref{qe-for-induced}.
  \item Let $\phi(x_1,\ldots,x_n,y)$ be a formula as in
    Remark~\ref{wom-theory}.  By the previous point,
    $\phi(\mathcal{M}) = D \cap M^{n+1}$ for some definable $D
    \subseteq K^{n+1}$.  By cell decomposition in $K$, we know there
    is a bound $m$ such that there do not exist
    $a_1,\ldots,a_n,b_1,\ldots,b_{m} \in K$ with
    \begin{gather*}
      b_1 < \cdots < b_{m} \\
      (a_1,\ldots,a_n,b_i) \in D \iff i \equiv 0 \pmod{2}.
    \end{gather*}
    Then the same holds in $M$ for the set $D \cap M^{n+1}$, and the
    criterion of Remark~\ref{wom-theory} is verified.  \qedhere
  \end{enumerate}
\end{proof}
\begin{theorem} \label{ending}
  Let $M = \{x \in K : \delta x = 0\}$.
  \begin{enumerate}
  \item If $x \in M$, then $x^\alpha \in M$.
  \item $\alpha \notin M$.
  \item The structure $(M,+,\cdot,x^\alpha)$ has a weakly o-minimal
    theory.
  \item In the structure $(M,+,\cdot,x^\alpha)$, the definable function
    \begin{align*}
      f : M_{>0} &\to M \\
      f(x) &= x^\alpha
    \end{align*}
    is nowhere differentiable.
  \end{enumerate}
\end{theorem}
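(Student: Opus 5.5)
Parts (1) and (2) are exactly Remark~\ref{6-1}: for $x \in M_{>0}$, Theorem~\ref{the-key} gives $\delta(x^\alpha) = \alpha x^{\alpha-1}\delta x = 0$, and $\delta\alpha = 1 \neq 0$. For (1) we should also note that $x^\alpha$ is extended by $0$ on $x \le 0$, and $0 \in M$. So $x^\alpha$ restricts to a function $M \to M$, and $(M,+,\cdot,x^\alpha)$ is a genuine structure.

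For (3), I would show that every $0$-definable set of $(M,+,\cdot,x^\alpha)$ is $0$-definable in the induced structure $\mathcal{M}$ of Corollary~\ref{induced-cor}. The graphs of $+$, $\cdot$ and $x^\alpha$ on $M$ are the traces on $M$ of the corresponding $0$-definable sets in $K$. Hence every atomic formula of $(M,+,\cdot,x^\alpha)$ defines a $0$-definable set of $\mathcal{M}$, and so does every formula, since $\mathcal{M}$ is a structure in which all traces are basic. Thus $(M,+,\cdot,x^\alpha)$ is a reduct of $\mathcal{M}$. Weak o-minimality of a theory passes to reducts, for instance via the criterion of Remark~\ref{wom-theory}: a formula of the reduct is equivalent in $M$ to a formula of $\mathcal{M}$, and the uniform bound $m_\phi$ transfers. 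Corollary~\ref{induced-cor}(2) then gives (3). The order is definable from the field structure via squares, so weak o-minimality with respect to $\le$ is meaningful.

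For (4), suppose toward a contradiction that $f$ is differentiable at some $a \in M_{>0}$ with derivative $L \in M$. This means that for every $\epsilon \in M_{>0}$ there is $\eta \in M_{>0}$ such that for all $h \in M$ with $0<|h|<\eta$,
\[
\left|\frac{(a+h)^\alpha - a^\alpha}{h} - L\right| < \epsilon .
\]
In $\Rr$ the difference quotient converges to $\alpha a^{\alpha-1}$. Since $M$ is dense in $\Rr$ (it contains $\Qq$), we can take $\epsilon$ and $\eta$ rational. Letting $h \to 0$ through $M$ (through rationals, say), the difference quotients, which are real numbers, converge to $\alpha a^{\alpha-1}$, while by hypothesis they eventually stay within every rational $\epsilon$ of $L$. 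Hence $L = \alpha a^{\alpha-1}$. Then $\alpha = L a / a^\alpha \in M$, because $M$ is a field containing $a$, $a^\alpha$ (by (1)) and $L$. This contradicts (2).

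The main point needing care is this last step: the Archimedean comparison, which works because $M \subseteq \Rr$ and $\Qq \subseteq M$. Beyond that, the only subtlety is the reduct argument in (3), which is routine.
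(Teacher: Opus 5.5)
Your proof is correct and follows the paper's argument: (1)--(2) come from Remark~\ref{6-1}, (3) comes from viewing $(M,+,\cdot,x^\alpha)$ as a reduct of the induced structure $\mathcal{M}$ of Corollary~\ref{induced-cor}, and (4) holds because the derivative would have to be $\alpha a^{\alpha-1}\notin M$. Your Archimedean argument in (4), which shows that any derivative computed inside $M$ must equal the real derivative, makes explicit a step the paper leaves implicit.
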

\begin{proof}
	Points (1)--(2) were proved earlier (Remark~\ref{6-1}).
  \begin{enumerate}
  	\setcounter{enumi}{2}
  \item The function $x^\alpha$ is definable in the induced structure
    $\mathcal{M}$ appearing in Corollary~\ref{induced-cor}, so
    $(M,+,\cdot,x^\alpha)$ is a reduct of $\mathcal{M}$, and its
    theory is weakly o-minimal.
  \item If $x \in M_{>0}$, then $f'(x) = \alpha x^{\alpha - 1}$ isn't
    in $M$ because $x^{\alpha - 1} \in M$ and $\alpha \notin M$.  \qedhere
  \end{enumerate}
\end{proof}

\section{Directions for further research} \label{conclusion}
\subsection*{The model theory of $(K,\delta)$}
Of course, it would be nice to know something about the model theory
of the structure $(K,+,\cdot,x^\alpha,\delta)$.  More generally, can
we say anything about the model theory of models of
$\Th(K,+,\cdot,x^\alpha)$ expanded by ``twisted derivatives'' $\delta$
satisfying the conditions in Theorem~\ref{the-key}?
\subsection*{A C-minimal analogue}
I suspect that the weakly o-minimal field we have constructed has a
C-minimal analogue.  (See~\cite{c-source,cminfields} for background on C-minimality.)  Let $\Cc_p$ be the completion of $\Qq_p^{\alg}$.
For $\alpha \in \Cc_p$, the function
\begin{equation*}
  f_\alpha(x) = (1 + x)^\alpha = 1 + \alpha x + \frac{\alpha(\alpha-1)}{2}x^2 + \cdots
\end{equation*}
is defined on a sufficiently small ball around 0, and has the
derivative $\alpha f_\alpha(x)/x$.
\begin{conjecture}\label{c-nooo}
  There is some $\alpha \in \Cc_p$ and an algebraically closed
  subfield $M \subseteq \Cc_p$ closed under $f_\alpha$ such that
  \begin{itemize}
  \item The structure $(M,+,\cdot,v,f_\alpha)$ is C-minimal.
  \item $\alpha \notin M$, so $f_\alpha$ is nowhere differentiable
    within the structure $M$.
  \end{itemize}
\end{conjecture}
In fact, Conjecture~\ref{c-nooo} was the original goal, and the
present paper arose as a proof-of-concept for Conjecture~\ref{c-nooo}.
If I understand correctly, most of what we have done in this paper
should work equally well in the C-minimal case, with two big
exceptions:
\begin{itemize}
\item In Section~\ref{unneed}, we made extensive use of the fact that
  if $V$ is a smooth manifold and $p$ is a point not on $V$, then some
  sphere around $p$ is tangent to $V$.  (Namely: take a sphere around
  $p$ of minimal radius intersecting $V$.)  This probably doesn't hold
  in a non-archimedean setting.
\item Perhaps we don't have an analogue of the model completeness
  theorem of Miller~\cite{miller} (depending ultimately on the
  theorems of the complement by Gabrielov~\cite{gabrielov} and
  Wilkie~\cite{wilkie-thm}).
\end{itemize}
So the generalization to the C-minimal case is not entirely trivial.
\subsection*{Prospects for definable groups and fields}
In theories like o-minimal fields and P-minimal fields, the usual
strategy for understanding definable fields and groups involves
putting a definable $C^1$-manifold structure on any definable group.
This depends essentially on generic differentiability.  It appears to
be a major setback that generic differentiability fails in the weakly
o-minimal case, and possibly the C-minimal case.

Fortunately, there is a way to save everything.  Recall that a
sequence $\{a_i\}_{i \in \Nn}$ is a Cauchy sequence if $\lim_{n,m \to
  \infty} (a_n-a_m) = 0$.
\begin{definition}
  Let $K$ be a topological field.  A function $f : K \to K$ is
  \emph{Cauchy differentiable} at a point $a \in K$ if
  \begin{equation*}
    \lim_{x,y \to a} \left(\frac{f(x)-f(a)}{x-a} -
    \frac{f(y)-f(a)}{y-a}\right) = 0.
  \end{equation*}
\end{definition}
This ensures that $\lim_{x \to a} \frac{f(x)-f(a)}{x-a}$ exists in the
Cauchy completion of $K$.  One can also define Cauchy
differentiability for multivariable functions.

In forthcoming work, Acosta, Halevi, Hasson, and Peterzil prove the
following result, which nicely complements the negative result of this
paper:
\begin{theorem}[Acosta-Halevi-Hasson-Peterzil] \label{ahhp}
  Let $M$ be an expansion of an ordered field, with a weakly o-minimal
  theory.  If $U \subseteq M^n$ is an open definable set and $f : U
  \to M^m$ is a definable function, then $f$ is Cauchy differentiable
  on a dense open subset of $U$.
\end{theorem}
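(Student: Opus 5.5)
The plan is to reduce to one variable and then carry out a weakly o-minimal version of the o-minimal proof of generic differentiability. In that version, ``limits'' are replaced by definable cuts of $M$, and the new issue is to rule out limits landing in a gap of $M$.

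\textbf{Reduction to $n=m=1$.} Working coordinatewise in the target, we may assume $m=1$. Generic continuity of definable functions in dp-minimal (hence weakly o-minimal) fields, via \cite{dpm1,simonWalsberg}, lets us shrink $U$ to a dense open set on which $f$ is continuous. Fix all but one variable. The one-variable result, applied uniformly in the remaining coordinates, gives Cauchy partial derivatives on a dense open set; a compactness argument in the weakly o-minimal theory supplies the uniformity. The Cauchy partial derivatives are again definable as maps into the Dedekind completion, or more concretely as definable families of approximations. Applying generic continuity once more, now to these Cauchy partials, and then running the standard ``continuous partials imply differentiability'' argument with $\epsilon$'s, gives multivariable Cauchy differentiability on a dense open set. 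The only analytic input in that standard argument is a one-variable mean-value estimate. I would replace it by the inequality $\inf q \le (f(y)-f(x))/(y-x) \le \sup q$ for difference quotients over subintervals, which follows from local monotonicity.

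\textbf{One variable.} Let $f:I\to M$ be definable and continuous. For $a\in I$ put $q_a(x)=(f(x)-f(a))/(x-a)$. By the monotonicity theorem for weakly o-minimal structures \cite{weakOmin}, $q_a$ is monotone on some interval $(a,a+\epsilon)$ and on some interval $(a-\epsilon,a)$. Hence $q_a(x)$ converges, as $x\to a^\pm$, to definable cuts $D^+(a),D^-(a)$ in $\overline{M}\cup\{\pm\infty\}$, where $\overline{M}$ is the definable Dedekind completion. Cauchy differentiability at $a$ is then equivalent to three conditions:
\begin{enumerate}[(i)]
\item $D^+(a)=D^-(a)$;
\item this common cut is finite;
\item the cut is \emph{non-valuational}, i.e. it is approximated by elements of $M$ within every $\epsilon>0$, so that monotone approach to it is Cauchy.
\end{enumerate}
Conditions (i) and (ii) should hold off a finite set by the usual o-minimal arguments transposed to $\overline{M}$, which behaves like a definably complete structure on the relevant definable sorts. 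For instance, if $D^+>D^-$ on an interval, pick $c$ strictly between the two cuts on a subinterval. Then $f(x)-cx$ has a strict local minimum at every point of that subinterval, contradicting local monotonicity. Infinite values are excluded similarly, using local monotonicity of $f$.

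\textbf{Main obstacle: condition (iii).} Here weak o-minimality genuinely differs from o-minimality, and the example of this paper shows that cuts not realized in $M$ really occur. What must be excluded is that $D(a)$ is a \emph{valuational} cut, one invariant under adding some fixed $\epsilon>0$, on an interval of $a$'s. My first attempt would exploit the symmetry $q_a(b)=q_b(a)$.

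Suppose on an interval $J$ every $D(a)$ has width at least some $\epsilon$. By weak o-minimality we can shrink $J$ so that $a\mapsto D(a)$ is monotone and continuous as a map into $\overline{M}$. Now take $a<b$ close together in $J$. The value $q_a(b)$ is trapped, up to an error controlled by $|b-a|$, between $D(a)$ and $D(b)$, by the mean-value inequality above. Then the definable set $\{q_a(b): b\in J\}$ would accumulate onto a cut of width at least $\epsilon$ from both sides, for a whole interval of values of $a$.

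The plan is to derive from this a definable subset of $M$ that is not a finite union of convex sets. I would build it from points $b$ at which $q_a(b)$ lies at ``distance'' less than $\epsilon/2$ from $D(a)$: this set should alternate infinitely often near $a$. Alternatively, the assignment $a\mapsto D(a)$ itself would give a non-constant definable family of valuational cuts, and I would aim to contradict the structure theory of such cuts in \cite{weakOmin} (for instance, that a definable function cannot move a valuational cut continuously across an interval). I expect this last step to be where most of the work lies, and possibly where an extra idea is needed.
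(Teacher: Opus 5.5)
The paper gives no proof of this theorem: it is quoted from forthcoming work of Acosta, Halevi, Hasson, and Peterzil, so your proposal has to stand on its own. Your reduction of one-variable Cauchy differentiability at $a$ to conditions (i)--(iii) is correct, and the strict-local-minimum argument for (i) works. But the step that carries essentially all the content, (iii), is not proved, and neither route you suggest works as stated.

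\textbf{The first route.} If a cut $C$ has width at least $\epsilon$, then $x<C$ implies $x+\epsilon/2<C$, and $x>C$ implies $x-\epsilon/2>C$. So no element of $M$ is within $\epsilon/2$ of $D(a)$, and the set of $b$ with $q_a(b)$ ``within $\epsilon/2$ of $D(a)$'' is empty, not infinitely alternating. Nor is anything wrong with a definable set accumulating onto a valuational cut from both sides; $M$ itself does so. Moreover, at generic points $f$ is locally strictly convex, strictly concave, or linear (Fact~\ref{acofact}). Then $b\mapsto q_a(b)$ is monotone on each side of $a$, so no alternation can appear.

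\textbf{The second route.} The principle that a definable function cannot move a valuational cut continuously across an interval is false as stated. Take a real closed field with a proper convex valuation ring $\mathcal{O}$, and send $a$ to the upper edge of the coset $a+\mathcal{O}$. This is a definable, locally constant (hence continuous) family of valuational cuts. It takes infinitely many values on every interval whose length lies outside $\mathcal{O}$.

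\textbf{What (iii) actually requires.} On a piece where $f$ is strictly convex, (i) gives $D(a)<q_a(b)<D(b)$ for $a<b$. So $a\mapsto D(a)$ is a \emph{strictly increasing} definable map into valuational cuts, of width at least a uniform $\epsilon$ by Lemma~\ref{hammer}. Thus (iii) amounts to showing that the derivative cut cannot be such a map. That needs a finiteness or local-constancy theorem for cut-valued definable functions, in the spirit of Theorem~\ref{fin-image-1}. It does not follow from that theorem, which only treats $M$-valued functions; the valuation-ring example shows that locally constant cut-valued functions can have infinite image. Your proposal does not supply such a result.

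\textbf{Local-to-global across gaps.} A second gap is that you pass from local to global information across gaps of $M$, and this fails. The mean-value inequality does not follow from local monotonicity. Let $F=\Rr\cap\Qq^{\alg}$ and $P=\{x\in F : x<\pi\}$. Every model of $\Th(F,+,\cdot,P)$ is a real closed field with a convex predicate, so this theory is weakly o-minimal by Baisalov--Poizat. Define $h$ to be $x$ on $P$ and $x-1$ off $P$. Then $h$ is differentiable at every point of $F$ with $h'\equiv 1$, yet $(h(y)-h(x))/(y-x)=1-1/(y-x)<1$ whenever $x<\pi<y$.

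\textbf{Consequences for the rest of your argument.} Before running the ``continuous partials imply differentiability'' argument, you must shrink to boxes whose coordinate segments cross no such jump. That requires controlling how the jump locus varies with the other coordinates, which you do not do. Likewise, (ii) is not ``similar'' to (i). No local argument excludes $D\equiv+\infty$, and the o-minimal proof upgrades `$f-cx$ is increasing through every point' to `$f-cx$ is increasing on the interval', which can fail at gaps whose position depends on $c$. Part (ii) can be rescued with Fact~\ref{acofact}: on a strictly convex piece, $q_a(x)\le D^-(a)\le D^+(a)\le q_a(b)$ for $x<a<b$. Finally, treating $\overline{M}$ as definably complete is justified, via Wencel's completion, only in the non-valuational case, which is exactly the case where (iii) is vacuous.
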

In particular, if $M$ is definably Cauchy complete (i.e., it defines
no non-valuational cuts), then $M$ has generic differentiability.

In \cite[Theorem~9.5]{own-C-min} I show that if $M$ is a definably
Cauchy complete C-minimal field of characteristic 0, then $M$ has
generic differentiability.  The same proof almost certainly proves
\begin{probably} \label{wellyeah}
  Let $M$ be a C-minimal expansion of a valued field of characteristic
  0.  If $U \subseteq M^n$ is an open definable set and $f : U \to
  M^m$ is a definable function, then $f$ is Cauchy differentiable on a
  dense open subset of $U$.
\end{probably}
It turns out that one can replace ``differentiable'' with ``Cauchy
differentiable'' in the $C^1$-manifolds machinery, and regain access
to tools like the adjoint representation.  If I understand correctly,
Acosta, Halevi, Hasson, and Peterzil carry this out in the weakly
o-minimal case in their forthcoming work, leading to results such as
the following:
\begin{theorem}[Acosta-Halevi-Hasson-Peterzil]
  Let $M$ be an expansion of an ordered field, with a weakly o-minimal
  theory.  If $K$ is a definable field in $M$, then $K$ is finite or
  definably isomorphic to $M$ or $M(\sqrt{-1})$.
\end{theorem}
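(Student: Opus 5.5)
The plan is to follow the o-minimal strategy (as in Peterzil--Pillay--Starchenko and the P-minimal work of Halevi--Hasson--Peterzil), replacing ``differentiable'' with ``Cauchy differentiable'' throughout and using Theorem~\ref{ahhp} as the substitute for generic differentiability. Let $K$ be an infinite definable field in $M$, say $K \subseteq M^n$ after eliminating imaginaries, which holds since $M$ expands an ordered field. Weakly o-minimal theories have a good dimension theory agreeing with dp-rank. I would therefore first reduce to the case where $K$ contains a definable open subset of $M^{d}$ with $d=\dim K$, and then cover $K$ by finitely many translates of such a chart.

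The first step is to build a ``Cauchy $C^1$'' structure on $K$. By Theorem~\ref{ahhp}, the field operations, read in a chart, are Cauchy differentiable on a dense open set. Using homogeneity under translations $x\mapsto x+c$ and multiplications $x\mapsto cx$, this upgrades to Cauchy differentiability everywhere. The tangent space at $0$ is then naturally $\widehat{M}^{d}$, where $\widehat{M}$ is the Cauchy completion of $M$, which is a real closed field.

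Next I would take the adjoint-type representation. For $a \in K$, the multiplication map $\mu_a : x \mapsto ax$ fixes $0$, and its Cauchy derivative at $0$ gives a matrix $\rho(a) \in M_d(\widehat{M})$. The chain rule, which I expect to survive for Cauchy derivatives, shows that $\rho$ is multiplicative. The derivative of addition at $(0,0)$ is $(u,v)\mapsto u+v$, so $\rho$ is also additive. Since $\rho(1)=I$ and $K$ is a field, $\rho$ is an injective ring homomorphism $K \to M_d(\widehat{M})$. Its image generates a commutative $\widehat{M}$-subalgebra that is a domain of finite dimension, hence a finite field extension of $\widehat{M}$, so it is $\widehat{M}$ or $\widehat{M}(\sqrt{-1})$.

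The main obstacle is turning the abstract embedding $K \hookrightarrow \widehat{M}(\sqrt{-1})$ into a \emph{definable} isomorphism with $M$ or $M(\sqrt{-1})$, because $\widehat{M}$ is not definable in $M$. What I would try first is to show that $\rho$ lands in the definable part $M_d(M)$ after precomposing with a suitable definable chart. For instance, pick a generic $a$ and use the difference quotients $(\mu_a(x)-\mu_a(y))$ along definable curves to define, up to infinitesimals, a definable map $K \to M$ or $M^2$ that is additive and multiplicative modulo a definable convex subgroup. One then argues that this subgroup must be trivial because $K$ has no nontrivial definable convex subgroups compatible with the field structure. Failing that, I would fall back on the dimension count: $d \in \{1,2\}$ by the image computation above. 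Then I would classify one- and two-dimensional definable fields directly, using that a one-dimensional definable ordered-like field in a weakly o-minimal structure is real closed and definably order-isomorphic to an interval-to-$M$ transfer. A two-dimensional field would then be handled by adjoining a square root of $-1$.
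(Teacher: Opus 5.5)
Your first half, a Cauchy-$C^1$ structure on $K$ from Theorem~\ref{ahhp} plus homogeneity and then the adjoint map $\rho\colon K\to M_d(\widehat{M})$, matches the route the paper indicates. The paper gives no proof of its own. It only says that Acosta--Halevi--Hasson--Peterzil run the $C^1$ machinery with Cauchy differentiability and use the adjoint representation, and it mentions passing to a Wencel completion as an alternative.

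\textbf{Where the argument breaks.} Your claim that the $\widehat{M}$-algebra $A$ generated by $\rho(K)$ is a domain is unjustified. A field of commuting matrices can span a non-domain: the regular representation of $\Qq(\sqrt 2)$ in $M_2(\Qq)$ spans a copy of $\Rr\times\Rr$ inside $M_2(\Rr)$. Even if $A$ were $\widehat{M}$ or $\widehat{M}(\sqrt{-1})$, you get no bound on $d$. A field $A\supseteq\widehat{M}$ acting on $\widehat{M}^d$ only gives $[A:\widehat{M}]\mid d$, so your later ``dimension count $d\in\{1,2\}$'' does not follow.

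In the o-minimal template both facts come from definability. There $\rho(K)$ is a definable subring of $M_d(R)$. For each $a$, the set $\{t\in R : t\rho(a)\in\rho(K)\}$ is a definable subgroup of $(R,+)$ containing $\Qq$, hence all of $R$ by definable connectedness. So $\rho(K)$ is itself an $R$-algebra that is a field, $\dim\rho(K)=\dim K$ forces $d\le 2$, and $\rho$ is the required definable isomorphism. Neither ingredient survives in your setting:
\begin{itemize}
\item $\rho$ is a priori only $\widehat{M}$-valued, so $\rho(K)$ is not definable. The paper's own example shows that Cauchy derivatives of definable maps really do leave $M$.
\item In a weakly o-minimal field, a definable subgroup of $(M,+)$ containing $\Qq$ can be a proper convex subgroup, so the scalar-closure step fails.
\end{itemize}

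\textbf{The definability step is left open.} This step is the substance of the theorem, and your suggested repairs are not arguments. No map ``additive and multiplicative modulo a convex subgroup'' is constructed. The principle that $K$ has no nontrivial definable convex subgroups compatible with its field structure is false in valuational examples: for $K=M\models\mathrm{RCVF}$, the valuation ring is a definable convex subring. The fallback of classifying one- and two-dimensional fields ``directly'' is just the theorem in low dimension, and it rests on the unproved $d\le 2$.

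What is missing is an argument that uses the definability of the locus where the Cauchy derivative lands in $M$. For instance, when $d=1$, consider $\{a\in K:\rho(a)\in M\}$. It is a definable subfield containing the prime field, hence infinite, hence open, hence all of $K$. Then $\rho$ is a definable embedding $K\to M$ whose image is an open, hence full, subfield. Your proposal contains nothing of this kind, and no genuine bound on $d$.

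A minor point: weakly o-minimal fields need not eliminate imaginaries. RCVF does not, and its residue field is an interpretable field not of the stated form. Since $K$ is assumed definable, you never need that step.
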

I would assume that similar arguments work in the C-minimal case,
leading to results like the following:
\begin{probably}
  Let $M$ be a C-minimal expansion of a valued field of characteristic
  0.  If $K$ is a definable field in $M$, then $K$ is finite or
  definably isomorphic to $K$.
\end{probably}

\subsection*{Wencel completions}
The right perspective on all this may be to use \emph{Wencel
  completions}.  If $M$ is a weakly o-minimal field with no
  valuational cuts, Wencel~\cite{wencel} constructs a canonical
  ``o-minimal completion'' $\overline{M}$ of $M$, whose new points are
  the definable nonvaluational cuts in $M$.  For the structure
  $(M,+,\cdot,x^\alpha)$ we have constructed, the o-minimal completion
  is presumably just $(K,+,\cdot,x^\alpha)$.  My intuition from
  \cite{own-C-min} is that one should expect a similar Wencel-style
  completion for C-minimal fields.  I plan to investigate this in
  future work.

Now suppose that $K$ is either a nonvaluational weakly o-minimal field
or a characteristic 0 C-minimal field, and $f$ is a definable function
which is nowhere differentiable.  Let $\overline{K}$ be the Wencel
completion of $K$, which should be o-minimal or definably complete
C-minimal.  Then $\overline{K}$ has generic differentiability.  The
function $f$ should extend continuously to a definable function
$\overline{f}$ in the structure $\overline{K}$.  Then $\overline{f}$
is differentiable almost everywhere.  The derivative of $f$ really
does exist, but in $\overline{K}$ rather than $K$, explaining facts like Theorems~\ref{ahhp} and \ref{wellyeah}.

I suspect that the smoothest way to prove theorems about definable
fields and definable groups in these settings is to pass to the Wencel
completion $\overline{K}$, prove the theorems there (where generic
differentiability holds), and finally transfer the results back to
$K$.  In fact, in the weakly o-minimal case, this strategy has been
executed successfully by Eleftheriou~\cite{eleftheriou}.

\begin{remark}
  In some form, Wencel completions were the inspiration for
  Conjecture~\ref{c-nooo} and this entire paper.  \emph{If} one
  believes that Wencel completions work, then the failures of generic
  differentiability in the C-minimal case \emph{must} arise as
  follows:
  \begin{enumerate}
  \item Start with a C-minimal structure $\overline{M}$ with generic
    differentiability, and a definable function $\overline{f}$.
  \item Find a dense subfield $M$ such that
    \begin{itemize}
    \item $\overline{f}$ maps $M$ into $M$.
    \item The derivative $\overline{f}'$ maps $M$ into $\overline{M}
      \setminus M$.
    \item The structure $(M,+,\cdot,v,f)$ is C-minimal.
    \end{itemize}
  \end{enumerate}
  The simplest example of a function $\overline{f}$ for which this
  might work is $\overline{f}(x) = x^\alpha$, with $\alpha$
  transcendental.
\end{remark}

\appendix

\section{Appendix: Weakly o-minimal theories of fields have the exchange property} \label{app}

Recall that a complete theory $T$ has the \emph{exchange
property}---or more accurately, $\acl$ has the exchange property in
$T$---if in any model $M \models T$, one has
\begin{equation*}
  a \in \acl(Cb) \setminus \acl(C) \implies b \in \acl(Ca)
\end{equation*}
for $a,b \in M$ and $C \subseteq M$.  This condition means that $\acl$
is a pregeometry on $M$.  When $\acl$ has the exchange property, there
is a nice dimension theory on definable sets
\cite[Section~2]{udi-anand-group-field}, and other nice things happen
in topologically tame settings (see for example
\cite[Proposition~3.31, Corollary~3.35]{visceral}).  The exchange
property holds in o-minimal structures, as well as P-minimal
structures \cite[Theorem~6.2]{p-min} and C-minimal expansions of
valued fields \cite{own-C-min}.

As noted earlier, Macpherson, Marker, and Steinhorn ask whether weakly
o-minimal expansions of ordered fields have the exchange property
\cite[\S7.3, Problem~4]{weakOmin}.  We will answer this question
positively---at least in the case of weakly o-minimal \emph{theories}
of ordered fields, what one could call ``strongly weakly o-minimal
fields''.

The core idea of the proof was inspired by Acosta's proof of generic
local convexity of definable functions in weakly o-minimal fields:
\begin{fact}[Acosta] \label{acofact}
  Let $M$ be an expansion of an ordered field, with a weakly o-minimal
  theory.  Let $U \subseteq M$ be definable and open and let $f : U
  \to M$ be definable.  Then there is a cofinite subset $U_0 \subseteq
  U$ such that for any $a \in U_0$, the function $f$ is strictly
  convex, strictly concave, or linear on a neighborhood of $a$.
\end{fact}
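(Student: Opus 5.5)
The plan is to argue by contradiction on a definable ``bad set''. Let $B \subseteq U$ be the set of $a \in U$ such that $f$ is \emph{not} strictly convex, strictly concave, or linear on any neighborhood of $a$. The set $B$ is definable. Since $M$ is weakly o-minimal, $B$ is a finite union of convex sets, so it is enough to show that $B$ contains no open interval. Suppose $I \subseteq B$ is an open interval. By the monotonicity and piecewise continuity theorems of Macpherson--Marker--Steinhorn for weakly o-minimal structures, I can shrink $I$ so that $f$ is continuous on $I$.

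For $a \in I$, I will study the difference-quotient function
\[ g_a(x) = \frac{f(x)-f(a)}{x-a}, \qquad x \ne a. \]
It is uniformly definable in $a$. Strict local convexity of $f$ near $a$ is equivalent to the following: for all $a'$ in a neighborhood $J$ of $a$, the function $g_{a'}$ is strictly increasing on $J \setminus \{a'\}$. Concavity and linearity correspond in the same way to ``strictly decreasing'' and ``constant''. By the monotonicity theorem applied to $g_a$, there is $\epsilon > 0$ such that $g_a$ is strictly increasing, strictly decreasing, or constant on $(a, a+\epsilon)$, and likewise on $(a-\epsilon, a)$. This assigns to each $a$ a pair of ``one-sided types''. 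The type map is definable, so after shrinking $I$ (weak o-minimality again) I may assume the pair of types is constant on $I$.

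There are now two cases.

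\emph{Mixed types.} Suppose, for example, that $g_a$ is increasing on the right of $a$ but decreasing on the left, for all $a \in I$. I would rule this out using the symmetry of chord slopes: $g_a(b) = g_b(a)$. Take $a < b$ close together in $I$. Comparing the monotonicity of $g_a$ near $b$ with that of $g_b$ near $a$, via three-point slope inequalities for $x<y<z$ with all three points in $I$, should produce a contradiction. If that direct argument is messy, I would instead consider the definable function $h(a) = \sup\{\epsilon : g_a \text{ has its types on } (a-\epsilon,a+\epsilon)\}$ and combine it with the argument in the next case.

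\emph{Matching types.} Here both one-sided types are ``increasing'' (the other cases are symmetric). I must also arrange that the left limit of $g_a$ at $a$ is at most its right limit. Since $M$ need not be complete, I would phrase this as the definable condition ``$g_a(x) \le g_a(y)$ for $x < a < y$ close to $a$''. Its failure on an interval should again be contradicted by the chord symmetry. The definable function
\[ \epsilon(a) = \sup\{\epsilon \le 1 : g_a \text{ is strictly increasing on } (a-\epsilon,a+\epsilon)\setminus\{a\}\} \]
is positive on $I$. Because $\Th(M)$ is weakly o-minimal, $\epsilon$ is continuous on some subinterval. Hence $\epsilon > \delta$ on some smaller interval $J$ for a fixed $\delta>0$, and I may take $J$ of length less than $\delta$. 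Then for all $a \in J$, the function $g_a$ is strictly increasing on $J \setminus \{a\}$, so $f$ is strictly convex on $J$. This contradicts $J \subseteq B$.

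I expect the main obstacle to be the uniformity step: passing from ``each $a$ has its own $\epsilon(a)$'' to a common $\delta$. This is where the hypothesis that the \emph{theory} is weakly o-minimal, and not just $M$, should be used, through piecewise continuity of definable functions uniformly in elementary extensions. A second obstacle is eliminating the mixed-type case cleanly.
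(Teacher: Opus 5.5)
\textbf{There is a genuine gap in the uniformity step.} Your overall plan is a reasonable route: take the one-sided monotonicity types of the chord-slope functions $g_a$, make them constant on an interval, obtain a uniform radius, and then compare slopes at three points. (The paper itself gives no proof of this Fact; it quotes it from Acosta.) But the step you flag as the main obstacle fails as written.

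The function $\epsilon(a)=\sup\{\epsilon\le 1:\dots\}$ need not take values in $M$. A weakly o-minimal field need not be definably complete, so the downward-closed set of admissible radii can have its supremum at an unrealized cut. For example, take the real algebraic numbers expanded by a predicate for $(-\infty,\pi)$, with $f(x)=x^2$ for $x<\pi$ and $f(x)=-x^2$ for $x>\pi$. At a point $a<\pi$ close to $\pi$, the admissible radii are exactly the $\epsilon\in M$ with $0<\epsilon<\pi-a$, and this set has no supremum in $M$. So piecewise continuity of definable functions cannot be applied to $\epsilon$. The same objection applies to the fallback function $h(a)$ in your mixed case.

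What you need is exactly Lemma~\ref{hammer}: if $a\bowtie\epsilon$ is definable, downward closed in $\epsilon$, and every $a\in I$ has a witness, then some infinite convex $I_0\subseteq I$ has a common witness $\epsilon_0$. This needs no suprema. Weak o-minimality of the \emph{theory} gives a uniform bound $N$ on the number of convex components of the sets $D_\epsilon=\{a\in I: a\bowtie\epsilon\}$. If no $D_\epsilon$ contained an interval, each would have at most $N$ points. Yet any $N+1$ points of $I$ all lie in $D_\epsilon$ when $\epsilon$ is the least of their witnesses. That, not continuity, is where the hypothesis on $\Th(M)$ is used; the paper proves the lemma by saturation instead.

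Once you have a common radius $\epsilon_0$ for the \emph{one-sided} conditions on an interval $J$ of length less than $\epsilon_0$, both of your cases close at once. Your extra condition on one-sided limits (monotonicity of $g_a$ across $a$) then becomes unnecessary. Write $m(u,v)=g_u(v)=g_v(u)$. For $x<y<z$ in $J$ we have $(z-x)m(x,z)=(y-x)m(x,y)+(z-y)m(y,z)$, so $m(x,z)$ lies strictly between $m(x,y)$ and $m(y,z)$ unless all three are equal. The right type of $g_x$ fixes how $m(x,y)$ compares with $m(x,z)$. The left type of $g_z$ fixes how $m(x,z)$ compares with $m(y,z)$. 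Hence mixed types are contradictory, and matching types give strict convexity, strict concavity, or linearity on $J$, contradicting $J\subseteq B$.

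One further point needs care. On the piece adjacent to $a$, the weakly o-minimal monotonicity theorem may only give \emph{local} constancy or local strict monotonicity of $g_a$. A locally constant germ that is not constant near $a$ would be a locally constant definable function with infinite image, which is exactly what Theorem~\ref{fin-image-1} rules out. In your setting this alternative is harmless anyway, since it would make $f$ linear near the points of $I$ just to the right of $a$. However, the three-point argument uses genuine strict monotonicity of $g_x$ on all of $(x,x+\epsilon_0)$, not merely near each point. So you must invoke a form of the monotonicity theorem that gives global strict monotonicity on the piece adjacent to $a$.
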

(Fact~\ref{acofact} should appear implicitly or explicitly in the
forthcoming work of Acosta-Halevi-Hasson-Peterzil.)

The original strategy for proving the exchange property involved
proving a mysterious variant of Fact~\ref{acofact} for locally
constant functions.\footnote{Let $f$ be a function which is locally
constant and weakly increasing.  Say that $f$ is \emph{pseudoconvex}
if $f(x)<f(y)<f(z)$ implies $\frac{f(x)-f(y)}{x-y} <
\frac{f(y)-f(z)}{y-z}$, and define \emph{pseudoconcave} similarly.
The strategy was to prove that $f$ is locally pseudoconvex or
pseudoconcave.}  While we no longer use convexity here, you can see a
trace of this idea in Lemma~\ref{tricol}.

We aim to make the proof be self-contained.  Some steps could be
simplified or eliminated using tools from \cite{weakOmin}.  Peterzil
has found a way to considerably simplify the proof given here; his
proof should appear in the forthcoming work of
Acosta-Halevi-Hasson-Peterzil.

For the remainder of the appendix, assume that $T$ is a complete,
weakly o-minimal expansion of the theory of ordered fields.  Let $\Mm$
be a monster model of $T$, that is, a $\kappa$-saturated and
$\kappa$-strongly homogeneous model for $\kappa$ much larger than the
size of the language.

\subsection*{Tools}
\begin{remark} \label{ideal}
  Every infinite unary definable set $D \subseteq \Mm$ contains an
  infinite definable convex set, by weak o-minimality.  If $I$ is an
  infinite definable convex set, and we partition $I$ into finitely
  many definable sets $I = D_1 \sqcup \cdots \sqcup D_n$, then one of
  the $D_i$ is infinite and therefore contains an infinite definable
  convex set.  We will use these facts without comment, below.
\end{remark}
The following is well-known, but we include the easy proof for
completeness:
\begin{lemma} \label{hammer}
  Let $I \subseteq \Mm$ be an infinite definable convex set.  Let
  $\bowtie$ be a definable relation between $I$ and $\Mm_{>0}$
  satisfying the following conditions:
  \begin{itemize}
  \item For any $a \in I$, there is $\epsilon > 0$ such that $a
    \bowtie \epsilon$.
  \item If $a \bowtie \epsilon$ and $0 < \epsilon' < \epsilon$, then
    $a \bowtie \epsilon'$.
  \end{itemize}
  Then there is a smaller infinite definable convex set $I_0 \subseteq
  I$ and some positive $\epsilon_0$ such that for every $a \in I_0$,
  we have $a \bowtie \epsilon_0$.
\end{lemma}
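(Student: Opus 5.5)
Let $f : I \to \Mm_{>0}$ be the function sending $a \in I$ to a canonical witness.  I would use the supremum of the admissible $\epsilon$ (capped at $1$), but in a weakly o-minimal structure that supremum need not exist in $\Mm$, so I will avoid suprema entirely.  For each $a \in I$, set
\[
S_a = \{\epsilon \in \Mm_{>0} : a \bowtie \epsilon\}.
\]
By hypothesis $S_a$ is nonempty and downward closed in $\Mm_{>0}$, so it is an initial segment of $\Mm_{>0}$ containing some interval $(0,\epsilon]$.  The plan is to find an infinite definable convex $I_0 \subseteq I$ on which these initial segments are uniformly bounded away from $0$.

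First I try a direct compactness/saturation argument.  Suppose, for contradiction, that for every infinite definable convex $J \subseteq I$ and every $\epsilon > 0$, some $a \in J$ fails $a \bowtie \epsilon$.  For each $\epsilon > 0$ consider
\[
B_\epsilon = \{a \in I : \neg(a \bowtie \epsilon)\}.
\]
This set is definable (with parameter $\epsilon$), and it increases as $\epsilon$ increases.  The assumption says that $B_\epsilon$ meets every infinite definable convex subset of $I$.  By weak o-minimality, $B_\epsilon$ is a finite union of convex sets, so its complement $I \setminus B_\epsilon$ is also a finite union of convex sets.  Any infinite convex piece of $I \setminus B_\epsilon$ would be an infinite definable convex set avoiding $B_\epsilon$, which is impossible.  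Hence $I \setminus B_\epsilon$ is finite for every $\epsilon > 0$.

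Next I need a uniform bound on these finite sets, which is the key step.  The family $\{I \setminus B_\epsilon\}_{\epsilon > 0}$ is a definable family of finite sets.  In a model of a weakly o-minimal theory the family is uniformly finite: weakly o-minimal theories eliminate $\exists^\infty$, via the alternation bound of Remark~\ref{wom-theory}.  So there is $N$ with $|I \setminus B_\epsilon| \le N$ for all $\epsilon > 0$.  The sets $I \setminus B_\epsilon$ shrink as $\epsilon$ grows, but they need not be nested in the direction I want, since they grow as $\epsilon \to 0$.

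Now pick $N+1$ distinct points $a_0,\dots,a_N \in I$, which is possible because $I$ is infinite.  Each $a_k$ has some $\epsilon_k$ with $a_k \bowtie \epsilon_k$.  Let $\epsilon = \min_k \epsilon_k$.  By downward closure $a_k \bowtie \epsilon$ for all $k$, so $I \setminus B_\epsilon$ contains $N+1$ points, a contradiction.  Therefore some infinite definable convex $J$ and some $\epsilon_0$ satisfy $a \bowtie \epsilon_0$ for all $a \in J$, and we take $I_0 = J$.

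The main obstacle is the uniform finiteness step.  If I prefer not to invoke elimination of $\exists^\infty$, the argument still goes through without it.  Choose any $N+1$ points for an arbitrary finite $N$ and take the minimum of their witnesses; this shows each $I \setminus B_\epsilon$ is finite yet can contain arbitrarily many given points for small $\epsilon$.  That alone is no contradiction, so some uniformity is genuinely needed.  The cleanest fix is to note that the argument only needs one parameter: the set $I \setminus B_\epsilon$ is a single definable set for each fixed $\epsilon$.  We apply the argument at the specific $\epsilon = \min_k \epsilon_k$ for points $a_0,\dots,a_N$, where $N$ is supplied by Remark~\ref{wom-theory} applied to the formula $a \in I \wedge a \bowtie \epsilon$.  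The point is that a finite set with more than $N$ points already produces an alternation pattern of length $m_\phi$ between members and non-members of $I$, contradicting the bound.
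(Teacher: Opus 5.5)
Your argument is correct, but it takes a different route from the paper. The paper uses saturation of the monster model. It picks countably many distinct $a_1, a_2, \ldots \in I$ with witnesses $\epsilon_i$, and uses $\aleph_1$-saturation to find a single $\epsilon_0 > 0$ below every $\epsilon_i$. Then $\{a \in I : a \bowtie \epsilon_0\}$ is infinite, and an infinite convex component of it serves as $I_0$. You instead use the uniform bound coming from weak o-minimality of the \emph{theory} (Remark~\ref{wom-theory}). This bound gives $N$ such that $D_\epsilon = \{a \in I : a \bowtie \epsilon\}$ has at most $N$ convex components for every $\epsilon$. Consequently finitely many points ($N+1$ of them) and the minimum of their witnesses already suffice, and no saturation is needed.

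The trade-off is this. The paper's argument uses only that the single structure $\Mm$ is weakly o-minimal, together with saturation. Yours uses the uniformity in $T$, but works in any model of $T$. In the appendix's setting both hypotheses are available.

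Your write-up can be streamlined. The proof by contradiction is unnecessary: with $\epsilon = \min_k \epsilon_k$, the set $D_\epsilon$ contains $a_0, \ldots, a_N$ but has at most $N$ convex components. So two of the $a_k$ lie in the same component, which is infinite by density of the order, and that component is your $I_0$.

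Your last paragraph is muddled in two places. First, the suggestion that the argument ``still goes through without'' uniform finiteness is wrong, as you concede in the next sentence; the uniform bound is genuinely used. Second, the alternation produced by a large finite set is between members and non-members of $D_\epsilon$, not of $I$. Finally, the sets $D_\epsilon$ \emph{are} nested: they increase as $\epsilon$ decreases, by the downward-closure hypothesis.
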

\begin{proof}
  Take $a_1, a_2, \ldots$ distinct in $I$.  Take $\epsilon_i > 0$ such
  that $a_i \bowtie \epsilon_i$.  By saturation, there is some
  positive $\epsilon_0$ such that $\epsilon_0 \le \epsilon_i$ for
  every $i$.  Then $a_i \bowtie \epsilon_0$ for every $i$.  Let $D =
  \{a \in I : a \bowtie \epsilon_0\}$.  Then $D \supseteq \{a_i : i
  \in I\}$, so $D$ is infinite.  Take $I_0$ to be an infinite convex
  component of $D$.
\end{proof}
\begin{lemma} \label{2-indisc}
  Let $I \subseteq \Mm$ be an infinite definable convex set.  Let
  $\phi(x,y)$ be an $\Ll_{\Mm}$-formula in two variables.  Then there
  is an infinite definable convex subset $I_0 \subseteq I$ such that
  one of the following holds:
  \begin{itemize}
  \item For all $a,b \in I_0$ with $a<b$, we have $\phi(a,b)$.
  \item For all $a,b \in I_0$ with $a<b$, we have $\neg \phi(a,b)$.
  \end{itemize}
\end{lemma}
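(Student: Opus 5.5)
The plan is to get a one-sided Ramsey statement for pairs inside a definable interval, using saturation together with weak o-minimality. For $a \in I$, consider the definable set
\[
S_a = \{b \in I : b > a,\ \phi(a,b)\}.
\]
By weak o-minimality, $S_a$ is a finite union of convex sets. Hence there is some $\epsilon > 0$ such that on $(a, a+\epsilon)$ the truth value of $\phi(a,b)$ is constant. We may take $\epsilon$ small enough that $(a,a+\epsilon) \subseteq I$; this uses that $a$ is not the right endpoint of $I$, and we first shrink $I$ to a subinterval so that every point has room on the right. Let $t(a) \in \{0,1\}$ denote this constant value. Then $t$ is a definable function on $I$, since it is the germ of $S_a$ at $a^+$.

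First, partition $I$ into $\{a : t(a)=1\}$ and $\{a : t(a)=0\}$. By Remark~\ref{ideal}, one of these pieces contains an infinite definable convex subset. Shrink $I$ to it, so that $t$ is constant on $I$; say $t \equiv 1$, the other case being symmetric.

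Next, define a relation $a \bowtie \epsilon$ to mean that $\phi(a,b)$ holds for all $b$ with $a < b < a+\epsilon$. This relation is definable and downward closed in $\epsilon$. By the choice of $t$, every $a \in I$ satisfies $a \bowtie \epsilon$ for some $\epsilon > 0$. Lemma~\ref{hammer} then gives an infinite definable convex $I_1 \subseteq I$ and a single $\epsilon_0 > 0$ with $a \bowtie \epsilon_0$ for all $a \in I_1$.

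Finally, take $I_0 \subseteq I_1$ to be an infinite definable convex subset of length less than $\epsilon_0$. In an ordered field this is easy: pick $c \in I_1$ in the interior and intersect $I_1$ with $(c, c+\epsilon_0/2)$. Such a $c$ exists because an infinite convex set contains a nondegenerate open interval, and the intersection is infinite since the field is dense. Now for $a<b$ in $I_0$ we have $0 < b-a < \epsilon_0$, so $\phi(a,b)$ holds.

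The only delicate point is the initial step: making the right germ $t(a)$ well defined at every point. This requires removing the right endpoint of $I$, if one exists, by passing to a subinterval that is open on the right. It also requires noting that the finitely many convex components of $S_a$ and of its complement inside $I$ determine a unique component abutting $a$ from the right. Neither causes real trouble, and the rest is a direct application of Lemma~\ref{hammer} and Remark~\ref{ideal}.
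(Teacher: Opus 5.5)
Your proposal is correct and follows essentially the same route as the paper: sort points by the eventual truth value of $\phi(a,x)$ on a right neighborhood of $a$, shrink $I$ so this value is constant (replacing $\phi$ by $\neg\phi$ if needed), apply Lemma~\ref{hammer} to get a uniform $\epsilon_0$, and then pass to a subinterval of diameter less than $\epsilon_0$. The only cosmetic difference is that the paper takes the germ of $\phi(a,\Mm)$ in all of $\Mm$ rather than inside $I$, so your extra step removing the right endpoint of $I$ is harmless but unnecessary.
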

\begin{proof}
  Let $D_1$ (resp.\@ $D_0$) be the set of $a \in I$ such that there is
  $\epsilon > 0$ such that $\phi(a,x)$ is true (resp.\@ false) for $x
  \in (a,a+\epsilon)$.  By weak o-minimality, every $a \in I$ belongs
  to $D_0 \cup D_1$.  Shrinking $I$ we may assume that $I \subseteq
  D_1$ or $I \subseteq D_0$ (see Remark~\ref{ideal}).  Replacing
  $\phi$ with $\neg \phi$, we may assume that $I \subseteq D_1$.  Let
  $a \bowtie \epsilon$ mean that $\phi(a,x)$ holds for $x \in
  (a,a+\epsilon)$.  For every $a \in I$ there is $\epsilon > 0$ such
  that $a \bowtie \epsilon$.  By Lemma~\ref{hammer} we can find an
  infinite definable convex set $I_0 \subseteq I$ and an $\epsilon >
  0$ such that $a \bowtie \epsilon$ for every $a \in I_0$.  By
  replacing $I_0$ with a small interval contained inside $I_0$, we can
  assume the diameter of $I_0$ is less than $\epsilon$:
  \begin{equation*}
    a,b \in I_0 \implies |a-b| < \epsilon.
  \end{equation*}
  Now for any $a<b$ in $I_0$, we have $b \in (a,a+\epsilon)$, and $a
  \bowtie \epsilon$, so $\phi(a,b)$ holds.
\end{proof}
As an example, we can reprove the local monotonicity theorem for
definable functions $f : I \to \Mm$, a case of
\cite[Theorem~3.3]{weakOmin}:
\begin{example} \label{monot}
  Let $I$ be an infinite definable convex set.  Let $Y$ be a
  definable linearly ordered set (such as $\Mm$) and let $f : I
  \to Y$ be a definable function.  Then there is a smaller infinite
  definable convex set $I_0 \subseteq I$ such that one of the
  following holds for $x,y \in I_0$:
  \begin{gather*}
    x < y \implies f(x) < f(y).  \\
    x < y \implies f(x) \ge f(y).
  \end{gather*}
This follows by applying Lemma~\ref{2-indisc} to the formula $x<y$.
  In the second case, we can restrict further, and get one of the following two cases:
  \begin{gather*}
    x<y \implies f(x) = f(y). \\
    x<y \implies f(x) > f(y).
  \end{gather*}
  In particular, there is an infinite definable convex subset $I_0
  \subseteq I$ on which $f$ is strictly increasing, strictly
  decreasing, or constant.
\end{example}
Suppose $I$ is a definable convex set and $E$ is a definable
equivalence relation on $I$ that is \emph{convex}, i.e., every
equivalence class is convex.  Say that a subset $X \subseteq I$ is
\emph{$E$-infinite} if the image of $X$ in $I/E$ is infinite.
\begin{remark} \label{ideal-2}
  If $X \cup Y$ is $E$-infinite, then $X$ or $Y$ is $E$-infinite.  It
  follows that if $D \subseteq I$ is definable and $E$-infinite, then
  $D$ contains an $E$-infinite definable convex subset $I_0$.
\end{remark}
We need the following variant of Lemma~\ref{hammer}:
\begin{lemma} \label{hammer-2}
  Let $I \subseteq \Mm$ be a definable convex set and $E$ be a
  definable convex equivalence relation with infinitely many classes.
  Let $\bowtie$ be a definable relation between $I$ and $\Mm_{>0}$
  satisfying the following conditions:
  \begin{itemize}
  \item For any $a \in I$ there is $\epsilon > 0$ such that $a \bowtie
    \epsilon$.
  \item If $a \bowtie \epsilon$ and $0 < \epsilon' < \epsilon$, then
    $a \bowtie \epsilon'$.
  \end{itemize}
  Then there is an $E$-infinite definable convex subset $I_0 \subseteq
  I$ and an $\epsilon_0 > 0$ such that every $a \in I_0$ satisfies $a
  \bowtie \epsilon_0$.
\end{lemma}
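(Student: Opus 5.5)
The plan is to imitate the proof of Lemma~\ref{hammer}, replacing ``infinitely many distinct points'' by ``points in infinitely many distinct $E$-classes''. Since $E$ has infinitely many classes, I will choose $a_1, a_2, \ldots \in I$ pairwise $E$-inequivalent. By hypothesis each $a_i$ has some $\epsilon_i > 0$ with $a_i \bowtie \epsilon_i$. The family of conditions $0 < x \le \epsilon_i$ is countable and finitely satisfiable, so $\kappa$-saturation of $\Mm$ gives a single $\epsilon_0 > 0$ with $\epsilon_0 \le \epsilon_i$ for all $i$. If $\epsilon_0 = \epsilon_i$ there is nothing to do; otherwise downward closure of $\bowtie$ gives $a_i \bowtie \epsilon_0$. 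Either way, $a_i \bowtie \epsilon_0$ for every $i$.

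Next I will set $D = \{a \in I : a \bowtie \epsilon_0\}$. This is a definable subset of $I$ containing every $a_i$. Because the $a_i$ lie in pairwise distinct $E$-classes, the image of $D$ in $I/E$ is infinite, so $D$ is $E$-infinite.

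Finally, Remark~\ref{ideal-2} provides an $E$-infinite definable convex subset $I_0 \subseteq D$, and every $a \in I_0$ satisfies $a \bowtie \epsilon_0$, as required. The only step that needs any justification is Remark~\ref{ideal-2} itself. If I had to verify it, I would argue as follows. By weak o-minimality, $D$ is a finite union of convex definable components, namely its maximal convex subsets, and these are definable. By the first sentence of Remark~\ref{ideal-2}, applied inductively to this finite union, one component must be $E$-infinite. So no step is a genuine obstacle; the proof is a direct transcription of Lemma~\ref{hammer}.
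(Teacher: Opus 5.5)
Your proof is correct and follows the paper's argument: choose $a_i$ in pairwise distinct $E$-classes, use saturation to find a uniform $\epsilon_0$, then take an $E$-infinite convex piece of $D = \{a \in I : a \bowtie \epsilon_0\}$ via Remark~\ref{ideal-2}. You also handle the case $\epsilon_0 = \epsilon_i$ (needed because downward closure is stated with strict inequality) and sketch why Remark~\ref{ideal-2} holds; both are fine details that the paper leaves implicit.
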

\begin{proof}
  Take $a_1, a_2, \ldots \in I$ in distinct $E$-classes.  As in the
  proof of Lemma~\ref{hammer}, take $\epsilon_0 > 0$ such that $a_i
  \bowtie \epsilon_0$ for every $i$.  Then $D = \{a \in I : a \bowtie
  \epsilon_0\}$ is $E$-infinite; take $I_0$ an $E$-infinite convex
  subset of $D$.
\end{proof}
If $f : I \to (X,\le)$ is weakly increasing, then the equivalence
relation
\begin{equation*}
  E(x,y) \iff f(x)=f(y)
\end{equation*}
is convex, and then we define \emph{$f$-infinite} to mean
$E$-infinite.  More simply, $X \subseteq I$ is $f$-infinite if the
image $f(X)$ is infinite.  We will mostly apply Lemma~\ref{hammer-2}
in this case.

\subsection*{Proof of the exchange property}
\begin{theorem} \label{fin-image-1}
  Let $I \subseteq \Mm$ be a definable convex set and $f : I \to \Mm$
  be a definable function which is locally constant.  Then the image
  $f(I)$ is finite.
\end{theorem}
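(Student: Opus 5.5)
The plan is to argue by contradiction: assume $f(I)$ is infinite and build, for every $m$, a single formula $\phi(x;\lambda,c)$ that alternates $m$ times along $I$. Since $T$ is a weakly o-minimal \emph{theory}, the uniform bound of Remark~\ref{wom-theory} then gives the contradiction. The field structure has to be used somewhere: in RCVF the valuation map $K_{>0}\to\Gamma$ is locally constant with infinite image. So the argument must compare $f$, which takes values in $\Mm$, with lines $y=\lambda x+c$.

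\textbf{Step 1 (normalisation).} By Example~\ref{monot} applied with $Y=\Mm$, keeping track of $f$-infiniteness instead of just infinitude (via Remark~\ref{ideal-2}), I pass to an $f$-infinite definable convex $I$ on which $f$ is weakly monotone. Replacing $f$ by $-f$ if necessary, $f$ is weakly increasing. Then the fibres $f^{-1}(s)$ are convex and open, and they form the classes of the convex relation $E$. The image $f(I)$ is an infinite definable set, so it contains an infinite convex set $J$; shrinking $I$, I may assume $f(I)=J$ is convex. In particular, between any two fibres there are infinitely many others.

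\textbf{Step 2 (uniform widths).} Let $a\bowtie\epsilon$ mean that $(a-\epsilon,a+\epsilon)\subseteq f^{-1}(f(a))$. Local constancy gives the hypotheses of Lemma~\ref{hammer-2}, so on an $f$-infinite convex $I_0$ every fibre meeting $I_0$ has width at least $\epsilon_0$. Next I apply Lemma~\ref{2-indisc}, or rather its $E$-infinite analogue proved with Lemma~\ref{hammer-2} in the same way, to formulas comparing the slopes $(f(b)-f(a))/(b-a)$ of pairs from distinct fibres with $\epsilon_0$-scaled quantities. The aim is a homogeneous configuration: all the slopes between distinct fibres inside $I_0$ are controlled by one another in a fixed way. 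This is where the ``three-colour'' Lemma~\ref{tricol} presumably enters.

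\textbf{Step 3 (alternation).} Consider the definable family $D_{\lambda,c}=\{x\in I_0: f(x)>\lambda x+c\}$. The graph of $f$ on $I_0$ is a staircase: the treads have width at least $\epsilon_0$, and the rises are tiny compared with any fixed positive slope once we restrict to a small subinterval of $J$. Fix $m$ fibres at values $s_1<\dots<s_m$. I want to choose $\lambda,c$ so that the line passes below the left part and above the right part of each chosen tread. Then $D_{\lambda,c}$ has at least $m$ convex components, and since $m$ is arbitrary this contradicts Remark~\ref{wom-theory}. To get $\lambda$ and $c$, I would use the homogeneity from Step 2 together with saturation: choose the $s_i$ so that the successive rises $s_{i+1}-s_i$ are all of the same order as $\lambda\epsilon_0$. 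This is possible because $J$ is an infinite interval containing values at every scale.

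I expect Step 3 to be the main obstacle. The difficulty is positioning the line so that it genuinely crosses each selected step. Because the fibres' endpoints need not lie in $\Mm$, the treads' positions are only cuts, and the homogeneity obtained in Step 2 must be strong enough to ensure that a single line meets all $m$ of them.
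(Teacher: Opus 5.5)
Your Steps 2 and 3 carry the whole proof, and they are not actually carried out. The mechanism you suggest for Step 3 also cannot work.

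For $\lambda>0$, the line $y=\lambda x+c$ lies below the left part and above the right part of the tread $F_i=f^{-1}(s_i)$ exactly when $(s_i-c)/\lambda\in F_i$ (by convexity of $F_i$). That is, exactly when the line meets $\Gamma(f)$ inside $F_i$. So you are asking for $m$ collinear points on the graph, with only two free parameters. Once two treads are crossed the line is determined, and crossing a third is a condition on that tread's horizontal position, which is dictated by $f$. Choosing the rises $s_{i+1}-s_i$ ``of the same order as $\lambda\epsilon_0$'' gives no control over this. Indeed, $\epsilon_0$ is only a lower bound on tread widths, and nothing you have established relates the horizontal positions of the treads to their values.

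Worse, the homogeneity one can actually extract in Step 2 points the other way. For each $a$, a secant argument plus weak o-minimality gives $\epsilon>0$ such that $f(a)<f(x)<f(a)+\epsilon$ implies the segment from $(a,f(a))$ to $(x,f(x))$ meets $\Gamma(f)$ only at its endpoints. Lemma~\ref{hammer-2} makes $\epsilon$ uniform on an $E$-infinite convex piece. Then shrinking the \emph{image} to an interval of length $<\epsilon$ (which, unlike shrinking the domain, keeps $E$-infiniteness) gives a piece on which no line of positive slope meets the graph in three points. This is exactly the paper's Lemma~\ref{tricol}. Meanwhile, the bound from Remark~\ref{wom-theory} only says that at most $N$ graph points lie on such a line. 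To contradict it you would need $N+1$ collinear points, and your sketch offers no mechanism for producing them.

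The missing idea is that three collinear points suffice once the ``at most two'' lemma is in hand, and they come from slope matching at a single point. Write $m(a,x)=\frac{f(x)-f(a)}{x-a}$. Given $a$, $f$ is constant on $[a-c,a+c]$ for some $c>0$. Since $f(I)$ is an open interval, $f(x)-f(a)$ can be made arbitrarily small and positive, and any such $x$ has $x-a>c$. So $\{m(a,x):x\gg a\}$ contains arbitrarily small positive numbers, hence by weak o-minimality all sufficiently small ones. The same holds for $\{m(a,y):y\ll a\}$. A common value gives $y\ll a\ll x$ on one line of positive slope, which is the contradiction.

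Your Step 1 also needs repair. Example~\ref{monot} applied to a locally constant $f$ only returns a piece on which $f$ is constant. The proof of Lemma~\ref{2-indisc} ends by shrinking to diameter $<\epsilon$, and nothing guarantees that such a set still meets infinitely many fibres; the same problem affects your ``$E$-infinite analogue'' in Step 2. The paper instead first passes to the leftmost convex component of each fibre, so that $E$ is convex. It then applies Example~\ref{monot} to the inverse bijection $\im(f)\to I/E$, whose domain is an infinite subset of $\Mm$.
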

We give the proof as a series of lemmas.  Assume until
Theorem~\ref{exchange} that we have a locally constant definable
function $f : I \to \Mm$ with infinite image.
\begin{lemma} \label{convex-fibers}
  There is a definable convex subset $I_0 \subseteq I$ such that
  $f(I_0)$ is infinite, and the fibers of $f : I_0 \to \Mm$ are
  convex.
\end{lemma}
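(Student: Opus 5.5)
Since $f$ has infinite image, I want to shrink $I$ to a convex piece on which $f$ is monotone, so that its fibers are automatically convex. Example~\ref{monot} cannot be applied directly, because $f$ is locally constant: on any infinite convex set where it is constant the image is a single point. What I need is monotonicity modulo fibers, and I will get it from Lemma~\ref{2-indisc} applied to pairs drawn from distinct fibers rather than distinct points.

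\emph{Step 1: pass to one point per fiber.} The image $f(I)$ is an infinite definable subset of $\Mm$. By weak o-minimality (Remark~\ref{ideal}) it contains an infinite definable convex set $J$. Replace $I$ by $f^{-1}(J)$. This set need not be convex, but it is a finite union of convex sets, and one of them, say $I_1$, has infinite image. Now $f\restriction I_1$ is locally constant with infinite image. For $y\in f(I_1)$, let $s(y)=\inf f^{-1}(y)\cap I_1$ when this infimum exists. Since definable subsets of $\Mm$ are finite unions of convex sets, I will instead use the first convex component of each fiber; this gives a definable family of convex sets indexed by $y$.

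\emph{Step 2: compare fibers.} On $f(I_1)$, consider the definable relation $\phi(y,y')$ saying that every point of the fiber $f^{-1}(y)\cap I_1$ lies below every point of $f^{-1}(y')\cap I_1$. Apply Lemma~\ref{2-indisc} to $\phi$ on an infinite convex subset of $f(I_1)$. This gives an infinite convex $J_0$ such that either all pairs $y<y'$ in $J_0$ satisfy $\phi$, or none do. In the first case the fibers over $J_0$ are linearly ordered in the same order as their values. Taking $I_0$ to be the convex hull of $f^{-1}(J_0)\cap I_1$, I expect that $f\restriction I_0$ has convex fibers. The reason is that any point of $I_0$ lies between two points of $f^{-1}(J_0)$, and by weak o-minimality the values of $f$ on that convex hull stay in $J_0$ except on finitely many convex pieces, which I cut away. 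The remaining cases (the reverse order, or fibers interleaving) I handle symmetrically, or rule out by a further application of Lemma~\ref{2-indisc} to the relation ``fiber $y$ meets the convex hull of fiber $y'$.''

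\emph{Main obstacle.} The hard step is excluding interleaving, where infinitely many fibers are each non-convex and nested inside one another's convex hulls. Here I would use local constancy. Each fiber is open and is a finite union of convex sets, and by compactness and weak o-minimality the number of convex components is uniformly bounded, say by $N$. Passing to an infinite convex set of $y$-values on which every fiber has exactly $k$ components, I then apply Lemma~\ref{2-indisc} to the relation ``the $i$th component of fiber $y$ lies below the $j$th component of fiber $y'$,'' for each pair $i,j$. This makes the relative order of components uniform across pairs $y<y'$. In particular, the first components are linearly ordered monotonically in $y$. Finally, I take $I_0$ to be the union of the first components over such an interval of $y$-values, trimmed to be convex. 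On this $I_0$ the fibers of $f$ are exactly those first components, which are convex, and the image is still infinite.
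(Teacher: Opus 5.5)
Your proposal is correct, and its final step is exactly the paper's proof: restrict $f$ to the union of the leftmost convex components $C_1(y)$ of the fibers $f^{-1}(y)$, then take a convex component of that union with infinite image, on which each fiber is $C_1(y)$ intersected with a convex set. Everything before that step is superfluous, namely Step~2, the uniform bound on the number of components, and the applications of Lemma~\ref{2-indisc} to order fibers and components: on $\bigcup_y C_1(y)$ the fiber over $y$ is exactly $C_1(y)$ however the fibers interleave, so your ``main obstacle'' never arises.
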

\begin{proof}
  Let $D$ be the definable set of $a \in I$ such that $a$ belongs to
  the leftmost (i.e., the most negative) convex component of
  $f^{-1}(f(a))$.  Let $g$ be the restriction $f \restriction D$.
  Then $f$ and $g$ have the same image, and for any $b$ in the image,
  $g^{-1}(b)$ is the leftmost convex component of $f^{-1}(b)$.  In
  particular, $g(D)$ is infinite and the fibers of $g$ are convex.
  Take $I_0$ to be a convex component of $D$ with infinite image under
  $g$.
\end{proof}
Replacing $I$ with $I_0$, we assume from now on that the fibers of $f$
are convex.  Let $E$ be the convex equivalence relation $E(x,y) \iff
f(x)=f(y)$.
\begin{lemma}
  There is an $E$-infinite definable convex subset $I_0 \subseteq I$
  such that $f$ is weakly increasing or decreasing on $I_0$.
\end{lemma}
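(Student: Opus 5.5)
We adapt Example~\ref{monot} (local monotonicity) by replacing ``infinite'' with ``$E$-infinite'' and Lemma~\ref{hammer} with Lemma~\ref{hammer-2}. The order in which things are proved matters: we first get a uniform bound on the width of one-sided neighbourhoods on which $f$ is constant, then use it to compare $f(a)$ with values of $f$ at nearby points in other classes.

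First apply an $E$-version of Lemma~\ref{2-indisc}. Since $f$ is locally constant, for every $a \in I$ there is $\epsilon>0$ with $f$ constant on $(a-\epsilon,a+\epsilon)\cap I$. Write $a \bowtie \epsilon$ for this, and note that $\bowtie$ is downward closed in $\epsilon$. By Lemma~\ref{hammer-2} there are an $E$-infinite definable convex $I_1 \subseteq I$ and an $\epsilon_0>0$ with $a \bowtie \epsilon_0$ for all $a \in I_1$. This is not yet enough, because we must compare points in \emph{different} $E$-classes, which may be far apart.

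So consider, for $a \in I_1$, the sets
\[
D_< = \{a : \exists \eta>0\ \forall x\in(a,a+\eta)\cap I_1\ (\neg E(a,x) \to f(x)>f(a))\}
\]
and the analogous $D_>$ with $f(x)<f(a)$. We cannot directly claim $I_1 = D_<\cup D_>$. Instead, use weak o-minimality applied to the definable set $\{x \in I_1 : x>a,\ f(x)>f(a)\}$. This set is a finite union of convex sets. Because the classes are convex, there is a least class beyond $[a]_E$ in the sense that, for $x$ to the right of $[a]_E$ and close enough to it, membership in this set is determined. Hence each $a$ lies in $D_<$, in $D_>$, or in the set of $a$ whose class is the rightmost class of $I_1$. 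The last case involves at most one class, so it can be discarded. By Remark~\ref{ideal-2}, after shrinking to an $E$-infinite definable convex set $I_2$ we may assume, say, $I_2 \subseteq D_<$; the case $D_>$ is symmetric. Lemma~\ref{hammer-2}, applied to the witness $\eta$, then gives an $E$-infinite convex $I_3$ and a uniform $\eta_0$.

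Now shrink $I_3$ to an $E$-infinite convex subset of diameter less than $\eta_0$. We must check this is possible. Cover $I_3$ by finitely many intervals of length $\eta_0$; this would need $I_3$ to be bounded, so first intersect with a bounded interval, which preserves $E$-infiniteness by Remark~\ref{ideal-2} after splitting $\Mm$ into $(-\infty,-N)$, $[-N,N]$, $(N,\infty)$ for suitable $N$. If $I_3$ is unbounded this step needs more care: one may instead pick an $E$-infinite piece contained in a single interval of length $\eta_0$ by saturation and compactness. I expect this finite-covering step to be the main obstacle, since an $E$-infinite set need not have an $E$-infinite subset of small diameter without a compactness argument. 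I would try to handle it by reapplying Lemma~\ref{hammer-2} to the relation ``$(a,a+\epsilon)$ meets only finitely many classes'' and arguing by contradiction.

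Once the diameter is below $\eta_0$, for $a<b$ in different classes we have $b\in(a,a+\eta_0)$, so $f(b)>f(a)$. Hence $f$ is weakly increasing on the resulting set $I_0$, and weakly decreasing in the symmetric case.
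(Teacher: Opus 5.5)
Your proposal has a genuine gap, and it sits in Step 2, not only in the diameter step you flag. Your set $D_<$ looks at $x \in (a,a+\eta)$. But $f$ is locally constant, so for every sufficiently small $\eta$ all such $x$ are $E$-equivalent to $a$, and the condition holds vacuously. For $a\in I_1$, any $\eta\le\epsilon_0$ already works. So $D_< = D_> = I_1$, the trichotomy carries no information, and nothing prevents the uniform $\eta_0$ from Lemma~\ref{hammer-2} from satisfying $\eta_0 \le \epsilon_0$.

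The diameter-shrinking step is then impossible, not merely delicate. Since $f$ is constant on $(a-\epsilon_0,a+\epsilon_0)\cap I$ for every $a \in I_1$, any subset of $I_1$ of diameter less than $\epsilon_0$ lies inside a single $E$-class. More generally, an $E$-infinite set spans infinitely many classes. A comparison that is only valid within a fixed distance $\eta_0$ of $a$ can cover all pairs only if $\eta_0$ exceeds infinitely many class widths, and nothing forces that. Your finite-covering argument is also archimedean: in a non-archimedean $\Mm$, a bounded interval need not be covered by finitely many intervals of length $\eta_0$. What you actually need is information about the classes immediately beyond $[a]_E$, at the cut above that class. That is an order-theoretic condition, not a metric one in the domain, so the $\epsilon$-uniformization of Lemma~\ref{2-indisc} does not transfer.

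The missing idea is to work in the image rather than the domain. Because $E$ is convex, $I/E$ is linearly ordered, and $f$ induces a definable bijection $I/E \to \im(f)$. Let $g : \im(f) \to I/E$ be its inverse. The relation $g(u)<g(v)$ is expressible by a home-sort formula: every element of $f^{-1}(u)$ lies below every element of $f^{-1}(v)$. Since $\im(f)$ is infinite, Example~\ref{monot} gives an infinite convex $J \subseteq \im(f)$ on which $g$ is strictly increasing or strictly decreasing. Then $f$ is weakly monotone on $D = f^{-1}(J)$: for instance, if $g$ is increasing, $x<y$ and $f(x)>f(y)$ would give $[x]_E > [y]_E$, hence $x>y$. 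Also $D$ is $E$-infinite because $f(D)=J$ is infinite, so an $E$-infinite convex component of $D$ (Remark~\ref{ideal-2}) is the required $I_0$. This is the paper's argument. It works because on $\im(f)$ there is no local constancy to trivialize the one-sided neighbourhood condition.
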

\begin{proof}
  Since $E$ is convex, the quotient $I/E$ is a linearly ordered set.
  The map $f$ induces a bijection $I/E \to \im(f)$.  Let $g : \im(f)
  \to I/E$ be the inverse map, another definable bijection.  By
  Example~\ref{monot}, there is an infinite convex set $J \subseteq
  \im(f)$ on which $g$ is strictly increasing or strictly decreasing.
  Then $f$ is weakly increasing or decreasing on $D = \{x \in I : f(x)
  \in J\}$, and the image $f(D)=J$ is infinite, so $D$ is
  $E$-infinite.  Take $I_0$ an $E$-infinite convex component of $D$.
\end{proof}
Replacing $I$ with $I_0$, and $f(x)$ with $\pm f(x)$, we assume from
now on that $f$ is weakly increasing.
\begin{lemma} \label{silly-open}
  There is an $E$-infinite definable convex subset $I_0 \subseteq I$
  such that $f(I_0)$ is an open interval $(a,b)$ in $\Mm$.
\end{lemma}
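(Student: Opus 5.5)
The plan is to apply weak o-minimality to the image $f(I)$ and then pull a convex piece of it back along $f$. We know $f(I)$ is an infinite definable subset of $\Mm$. By weak o-minimality it is a finite union of convex sets, so one of these convex pieces, call it $J_1$, is infinite and definable. (It is definable as a convex component of $f(I)$; each of the finitely many convex components is definable.)

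An infinite convex set in an ordered field contains a nonempty open interval. So pick $a<b$ in $J_1$ with $a<b$ and $(a,b)\subseteq J_1$. Since $J_1$ is infinite and densely ordered, we can take $a,b$ to be any two distinct points of $J_1$; by convexity $[a,b]\subseteq J_1\subseteq f(I)$.

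Now set $I_0 = f^{-1}((a,b)) = \{x\in I : a<f(x)<b\}$, which is definable. Because $f$ is weakly increasing on $I$, the preimage of a convex set is convex: if $x<y<z$ with $x,z\in I_0$, then $a<f(x)\le f(y)\le f(z)<b$, so $y\in I_0$. Also $(a,b)\subseteq f(I)$, so $f(I_0)=(a,b)$ exactly. Finally, $(a,b)$ is infinite since $\Mm$ is dense, so $I_0$ has infinitely many $E$-classes, i.e.\ $I_0$ is $E$-infinite.

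There is no real obstacle here. The only points needing care are that the convex component chosen is definable, and that monotonicity gives convexity of the preimage. Both follow directly from weak o-minimality and the standing assumption that $f$ is weakly increasing with convex fibers.
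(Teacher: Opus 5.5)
Your proposal is correct and follows the same route as the paper, which simply takes an open interval $J \subseteq f(I)$ and sets $I_0 = f^{-1}(J)$. You additionally spell out what the paper leaves implicit: that $I_0$ is convex because $f$ is weakly increasing on the convex set $I$, and that $I_0$ is $E$-infinite because $f(I_0)=(a,b)$ is infinite.
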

\begin{proof}
  The image $f(I)$ is infinite, so it contains an open interval $J$.
  Take $I_0 = f^{-1}(J)$.
\end{proof}
Replacing $I$ with $I_0$, we assume from now on that the image $f(I)$
is an open interval in $\Mm$.

For $x,y \in I$, let $x \ll y$ mean that $f(x) < f(y)$, or
equivalently, $[x]_E < [y]_E$ where $[x]_E$ is the $E$-equivalence
class of $x$.  Let $\phi(x,y)$ be the $\Ll(\Mm)$-formula saying that
$x,y \in I$ and $x \ll y$ and the line segment with endpoints
$(x,f(x))$ and $(y,f(y))$ doesn't intersect the graph $\Gamma(f)$
except at its endpoints.
\begin{lemma}\label{secant}
  If $a \in I$ and $a \ll x$, then there is $a \ll x' \le x$ with
  $\phi(a,x')$.
\end{lemma}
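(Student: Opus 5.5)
The plan is to take $x'$ to be the \emph{first} point after $a$ at which the graph of $f$ returns to the segment from $(a,f(a))$ to $(x,f(x))$. Let $L$ be the closed line segment joining $(a,f(a))$ and $(x,f(x))$, and write $\ell : [a,x] \to \Mm$ for the affine function whose graph is $L$. Since $a \ll x$ we have $f(x) > f(a)$, and $x > a$ because $f$ is weakly increasing. Hence $\ell$ has strictly positive slope, so $\ell$ is strictly increasing and $\ell(y) = f(a)$ only at $y = a$. Consider the $\Ll(\Mm)$-definable set
\begin{equation*}
  Z = \{ y \in I : a < y \le x, ~ a \ll y, ~ f(y) = \ell(y) \}.
\end{equation*}
It is nonempty, since $x \in Z$.

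\textbf{Step 1: $Z$ has a least element.} By weak o-minimality, $Z$ is a finite union of convex sets; let $C$ be its leftmost convex component. I claim $C$ is a single point. Otherwise $C$ contains two points $y_1 < y_2$. Pick $y \in C$ strictly between them, so that $C$ contains a neighbourhood $(y-\eta,y+\eta)$ of $y$. Shrinking $\eta$ using local constancy, $f$ is constant on this interval. But on $C$ we have $f = \ell$, and $\ell$ is strictly increasing, a contradiction. So $C = \{x'\}$ is a singleton, and $x' = \min Z$. This step, which uses weak o-minimality to avoid an infimum that is only a cut, is the only nontrivial point; local constancy of $f$ does the real work.

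\textbf{Step 2: verify $\phi(a,x')$.} By construction $a \ll x' \le x$, and $(x',f(x')) \in L$. Since $\ell$ is affine, the segment $L'$ from $(a,f(a))$ to $(x',f(x'))$ is exactly the part of $L$ over $[a,x']$. Suppose $(y,f(y)) \in L'$ for some $y$ with $a < y < x'$, so that $f(y) = \ell(y)$. There are two cases:
\begin{itemize}
\item If $a \ll y$, then $y \in Z$ and $y < x'$, contradicting minimality.
\item Otherwise $f(y) = f(a)$, since $f$ is weakly increasing and $y > a$. Then $\ell(y) = f(a)$ forces $y = a$, which is impossible.
\end{itemize}
Thus $L'$ meets $\Gamma(f)$ only at its endpoints, and since $a,x' \in I$ with $a \ll x'$, we get $\phi(a,x')$.
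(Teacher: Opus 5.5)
Your proof is correct and follows essentially the same route as the paper: take $x'$ to be the leftmost point of $(a,x]$ where $\Gamma(f)$ meets the positive-slope segment, using local constancy together with weak o-minimality to show this intersection has no interval part and therefore has a minimum. (The condition $a \ll y$ in $Z$ and the second case of Step 2 are redundant, since any $y > a$ with $f(y) = \ell(y)$ already has $f(y) > \ell(a) = f(a)$; this is how the paper gets $a \ll x'$ directly.)
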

\begin{proof}
  Let $\ell$ be the line segment from $(x,f(x))$ to $(a,f(a))$.  Since
  $x \ll a$ we have $f(x) < f(a)$, so $\ell$ has positive slope.
  Since $f$ is locally constant, $\ell$ cannot intersect $\Gamma(f)$
  on an open interval.  Therefore, it has only finitely many points of
  intersection.  Take $x'$ to be the leftmost intersection, other than $a$:
  \begin{equation*}
    x' = \min \{x' \in (a,x] : (x',f(x')) \in \ell\}.
  \end{equation*}
  Then $a < x' \le x$.  Since $\ell$ has positive slope, $f(a) <
  f(x')$, so $a \ll x' \le x$.  Clearly $\phi(a,x')$ holds.
\end{proof}
\begin{lemma}\label{tricol}
  There is an $E$-infinite definable convex subset $I_0 \subseteq I$
  such that the image $f(I_0)$ is an open interval, and no line of
  positive slope $\ell$ intersects the graph of $f \restriction I_0$
  at more than two points.
\end{lemma}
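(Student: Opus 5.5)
The plan is to obtain $I_0$ from a Ramsey-type dichotomy for the formula $\phi$, in the spirit of Lemma~\ref{2-indisc}. The dichotomy is applied to pairs in distinct $E$-classes, and Lemma~\ref{secant} rules out the bad alternative.

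\textbf{Reduction to pairs.} First observe what a violating line looks like. A line $\ell$ of positive slope meets each $E$-class in at most one point of the graph, since $f$ is constant on an $E$-class. So if $\ell$ meets $\Gamma(f \restriction I_0)$ in three points $(a,f(a)),(b,f(b)),(c,f(c))$ with $a<b<c$, then $f(a)<f(b)<f(c)$, i.e.\ $a \ll b \ll c$. Then $(b,f(b))$ lies on the segment from $(a,f(a))$ to $(c,f(c))$, so $\neg\phi(a,c)$. Hence it suffices to find an $E$-infinite definable convex $I_0 \subseteq I$ such that $\phi(a,c)$ holds for all $a,c \in I_0$ with $a \ll c$. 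Afterwards I can shrink $I_0$ as in Lemma~\ref{silly-open}, taking the preimage of an open interval inside $f(I_0)$, to make the image an open interval. Passing to a subset preserves the "at most two points" property, and the preimage of an interval under the weakly increasing $f$ is convex and still $E$-infinite.

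\textbf{The dichotomy.} Next I prove an $E$-version of Lemma~\ref{2-indisc}. For any formula $\psi(x,y)$ there should be an $E$-infinite definable convex $I_0$ on which either $\psi(a,c)$ holds for all $a \ll c$, or $\neg\psi(a,c)$ holds for all $a \ll c$. Granting this with $\psi = \phi$, the second alternative is impossible. Indeed, pick $a \ll x$ in $I_0$. Lemma~\ref{secant} gives $x'$ with $a \ll x' \le x$ and $\phi(a,x')$. Since $a < x' \le x$ and $I_0$ is convex, $x' \in I_0$, which is a contradiction. So the first alternative holds, which is what we need.

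\textbf{Proving the dichotomy (main obstacle).} I would mimic the proof of Lemma~\ref{2-indisc}, with Lemma~\ref{hammer-2} and Remark~\ref{ideal-2} in place of Lemma~\ref{hammer} and Remark~\ref{ideal}.
\begin{enumerate}
\item Let $D_1$ (resp.\ $D_0$) be the set of $a$ for which there is $\epsilon>0$ such that $\psi(a,x)$ holds (resp.\ fails) for all $x \in (a,a+\epsilon)$ with $a \ll x$.
\item By weak o-minimality applied to $\{x : a \ll x,\ \psi(a,x)\}$, every $a$ lies in $D_0 \cup D_1$.
\item One of $D_0,D_1$ is $E$-infinite, and then Lemma~\ref{hammer-2} gives a uniform $\epsilon_0$ on an $E$-infinite convex set.
\end{enumerate}
The delicate point is the final shrinking to diameter $<\epsilon_0$. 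A short interval need not be $E$-infinite, so the naive argument breaks. My first attempt is to measure "closeness" in the image rather than the domain. Since $f(I)$ is an open interval of $\Mm$ and $f$ is weakly increasing, I will replace the neighbourhoods $(a,a+\epsilon)$ by $\{x : f(a) < f(x) < f(a)+\epsilon\}$. I then run Lemma~\ref{hammer-2} with this relation, and finally restrict to $f^{-1}(J)$ for an open interval $J \subseteq f(I_0)$ of length $<\epsilon_0$. This set is automatically $E$-infinite, because $J$ is infinite. For $a \ll c$ in this set we get $f(c) \in (f(a),f(a)+\epsilon_0)$, so the uniform germ condition at $a$ yields $\psi(a,c)$. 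It remains to check that the germ condition in this form still follows from weak o-minimality. I would obtain this by applying weak o-minimality to the set $f(\{x : a \ll x,\ \psi(a,x)\})$ in the interval $f(I)$. This is not quite a direct application, since $\psi(a,x)$ may vary within a single $E$-class. I plan to handle this by first using Lemma~\ref{2-indisc}-style shrinking to make $\psi(a,-)$ constant on classes near $a$, or by replacing $\psi(a,x)$ with "$\psi(a,x')$ for all $x'\,E\,x$". The latter works for $\phi$, because $\phi(a,-)$ failing at one point of a class should propagate along the horizontal segment.
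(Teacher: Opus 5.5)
Your overall architecture matches the paper's proof. You measure closeness in the image via $\{x : f(a)<f(x)<f(a)+\epsilon\}$, get a uniform $\epsilon_0$ from Lemma~\ref{hammer-2}, restrict to $f^{-1}(J)$ for an open interval $J\subseteq f(I_0)$ of length $<\epsilon_0$, and kill a three-point line via $\neg\phi(x,z)$. Your reduction to pairs is fine, and so is your use of Lemma~\ref{secant} to exclude the ``$\neg\psi$'' alternative.

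The gap is exactly the step you leave open: the image-form germ condition at each $a$. Applying weak o-minimality to $f(\{x : a\ll x,\ \psi(a,x)\})$ cannot give it, for the reason you note. Your patch for $\phi$ then rests on an unjustified claim, namely that failure of $\phi(a,-)$ propagates along an $E$-class. This is not a geometric fact about step-like graphs. For $x<x'$ in one class, the segments from $(a,f(a))$ to $(x,f(x))$ and to $(x',f(x))$ have different slopes, and an intermediate step can meet the steeper one but not the shallower one. Picture steps at height $0$ at $a=0$, height $\tfrac12$ over $[5,6]$, and height $1$ over $[10,20]$. The segment to $(10,1)$ passes through $(5,\tfrac12)$, while the segment to $(20,1)$ passes under that step. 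Suppose you instead use $\phi^*(a,x) := \forall x'\,(x'\mathrel{E}x\to\phi(a,x'))$. Lemma~\ref{secant} only yields some $x'$ with $\phi(a,x')$, not $\phi^*(a,x')$, so you can no longer rule out the second alternative of your dichotomy. Your other option, shrinking to make $\psi(a,-)$ class-invariant, is not spelled out.

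The missing idea is to apply weak o-minimality in the domain, not the image. The set $I_{\gg a}=\{x\in I : a\ll x\}$ is a nonempty convex final segment of $I$. The set $\{x\in I_{\gg a} : \psi(a,x)\}$ is a finite union of convex sets, so there is $b\gg a$ such that $\psi(a,-)$ is constant on $\{x : a\ll x\le b\}$. Since $f$ is weakly increasing, $f(a)<f(x)<f(b)$ forces $a\ll x<b$. Hence any $0<\epsilon<f(b)-f(a)$ witnesses the germ condition. Variation inside a class never matters, because this initial segment of $I_{\gg a}$ contains every whole class with value in $(f(a),f(b))$.

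This is what the paper does, and it makes your general dichotomy unnecessary. For $\psi=\phi$, Lemma~\ref{secant} says $\phi(a,\Mm)$ is coinitial in $I_{\gg a}$, so the leftmost convex piece lies in $\phi(a,\Mm)$. That gives the germ condition for $\phi$ at every $a\in I$ directly, and Lemma~\ref{hammer-2} plus the shrinking to $f^{-1}(J)$ finish the proof as you planned.
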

\begin{proof}
  For $a \in I$ and $\epsilon > 0$, let $a \bowtie \epsilon$ mean that
  \begin{equation*}
    f(a) < f(x) < f(a) + \epsilon \implies \phi(a,x).
  \end{equation*}
  \begin{claim}
    For every $a \in I$, there is an $\epsilon > 0$ with $a \bowtie
    \epsilon$.
  \end{claim}
  \begin{claimproof}
    Let $I_{\gg a} = \{x \in I : x \gg a\}$.  By Lemma~\ref{secant},
    the set $\phi(a,\Mm)$ is coinitial (downwards cofinal) in $I_{\gg
      a}$.  By weak o-minimality, all sufficiently small elements of
    $I_{\gg a}$ satisfy $\phi(a,x)$.  Therefore, there is an element
    $b \in I_{\gg a}$ such that
    \begin{equation*}
      a \ll x \le b \implies \phi(a,x).
    \end{equation*}
    Take $\epsilon < f(b) - f(a)$.  Then
    \begin{equation*}
      f(a) < f(x) < f(a) + \epsilon \implies f(a) < f(x) < f(b) \implies a \ll
      x \ll b \implies \phi(a,x).
    \end{equation*}
    Thus $a \bowtie \epsilon$ holds.
  \end{claimproof}
  By Lemma~\ref{hammer-2}, there is an $E$-infinite definable convex
  subset $I_1 \subseteq I$ and an $\epsilon>0$ such that $a \in I_1
  \implies a \bowtie \epsilon$.  The image $f(I_1)$ is infinite, so it
  contains an open interval of the form $J = (b,b+\delta)$ with
  $\delta < \epsilon$.  Let $I_0 = f^{-1}(J)$.  Then $f(I_0)$ is the
  open interval $J$.  For the other required property, suppose for the
  sake of contradiction that a line $\ell$ of positive slope hits the
  graph of $f \restriction I_0$ at three points $(x,f(x)), (y,f(y)),
  (z,f(z))$ with $x<y<z$.  Then $\neg \phi(x,z)$ holds by definition
  of $\phi$.  On the other hand,
  \begin{equation*}
    f(x) < f(z) < f(x) + \delta < f(x) + \epsilon, \text{ and } x
    \bowtie \epsilon,
  \end{equation*}
  so $\phi(x,z)$ holds, a contradiction.
\end{proof}
Replacing $I$ with $I_0$, we assume from now on that no line of
positive slope $\ell$ hits the graph of $f$ at three points.

For $x,y \in I$, let $m(x,y) = \frac{f(x)-f(y)}{x-y}$. (Thanks to Acosta, Halevi, Hasson, and Peterzil for suggesting this notation.)
\begin{lemma}
  If $a \in I$, then all sufficiently small positive $\epsilon$ are in
  the set $D = \{m(a,x) : x \gg a\}$.
\end{lemma}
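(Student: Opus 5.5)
The plan is to reduce the statement to two facts: $D$ is a definable subset of $\Mm_{>0}$, and $0$ lies in the closure of $D$. Weak o-minimality then does the rest. $D$ is defined from $a$, $f$ and $I$, so it is a finite union of convex sets. If $\inf D = 0$, one of these finitely many convex components must itself have infimum $0$, because a finite union of convex subsets of $\Mm_{>0}$ each bounded away from $0$ is bounded away from $0$. A convex subset of $\Mm_{>0}$ with infimum $0$ contains an interval $(0,\eta)$ for some $\eta>0$, which is exactly the claim.

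First, $D \subseteq \Mm_{>0}$. If $x \gg a$ then $f(x) > f(a)$. Since $f$ is weakly increasing, $x \gg a$ also forces $x > a$. Hence $m(a,x) = (f(x)-f(a))/(x-a) > 0$.

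Second, $\inf D = 0$. Because $f$ is locally constant, there is $r > 0$ with $f$ constant on $(a-r,a+r)\cap I$. So every $x \gg a$ satisfies $x \ge a + r$, i.e.\ $x - a \ge r$.

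Now fix $\epsilon > 0$. The image $f(I)$ is an open interval containing $f(a)$, so there is $t$ with $0 < t < \epsilon r$ and $f(a) + t \in f(I)$. Pick $x \in I$ with $f(x) = f(a)+t$. Then $f(x) > f(a)$, so $x \gg a$, and
\[
  m(a,x) = \frac{t}{x-a} \le \frac{t}{r} < \epsilon .
\]
Thus $D$ contains elements below every positive $\epsilon$, which gives $\inf D = 0$ and completes the argument.

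I do not expect a serious obstacle. The only point needing care is the uniform lower bound $x - a \ge r$ for all $x \gg a$. It uses both that the fiber of $a$ contains a neighbourhood of $a$ (local constancy) and that $f$ is weakly increasing, so that no point to the left of $a$ can satisfy $x \gg a$. The no-three-collinear-points property from Lemma~\ref{tricol} does not seem to be needed here; presumably it is used in the later lemmas.
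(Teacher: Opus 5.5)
Your proposal is correct and follows the paper's own argument. You use weak o-minimality to reduce to showing that $D$ has arbitrarily small positive elements, local constancy to get a uniform gap $x - a \ge r$ for all $x \gg a$, and the fact that $f(I)$ is an open interval to make $f(x)-f(a)$ as small as needed. Compared with the paper, you only spell out the routine points (that $D \subseteq \Mm_{>0}$ and that weak monotonicity forces $x > a$) more explicitly.
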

\begin{proof}
  By weak o-minimality, it suffices to show that $D$ contains
  arbitrarily small elements.  Since $f$ is locally constant, there is
  $c > 0$ such that $f$ is constant on $[a,a+c]$.  Since $\im(f)$ is
  an open interval, there are $x \in I$ such that $f(x) - f(a)$ is an
  arbitrarily small positive number.  Such an $x$ must have $x \gg a$
  so $x > a+c$.  Then
  \begin{equation*}
    m(a,x) = \frac{f(x)-f(a)}{x-a} < \frac{f(x)-f(a)}{c}
  \end{equation*}
  is arbitrarily small.
\end{proof}
Similarly, the set $D' = \{m(a,y) : y \ll a\}$ must contain all
sufficiently small positive numbers.  Then some positive $\epsilon >
0$ is in both $D$ and $D'$, meaning that there are $x,y$ with
\begin{equation*}
  y \ll a \ll x \text{ and } m(a,x) = m(a,y).
\end{equation*}
Then the three points $(x,f(x))$ and $(a,f(a))$ and $(y,f(y))$ are
collinear, a contradiction.  This completes the proof of
Theorem~\ref{fin-image-1}.

Recall that a complete theory is \emph{geometric} in the sense of
Hrushovski and Pillay \cite[Definition~2.1]{udi-anand-group-field} if $T$ eliminates
$\exists^\infty$ and acl has the exchange property.
\begin{theorem} \label{exchange}
  $T$ is geometric.
\end{theorem}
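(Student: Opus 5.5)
Geometricity has two parts, elimination of $\exists^\infty$ and the exchange property, and I would prove them in that order.

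\textbf{Elimination of $\exists^\infty$.} Take a formula $\phi(x,\bar y)$ with $x$ a single variable. Since $T$ is weakly o-minimal as a theory (Remark~\ref{wom-theory}), there is a uniform bound $N$ such that every fiber $\phi(\Mm,\bar b)$ is a union of at most $N$ convex sets. A convex subset of an ordered field is either a single point or infinite. So $\phi(\Mm,\bar b)$ is infinite if and only if it is not a union of at most $N$ points, and that condition is first-order in $\bar b$. Uniform finiteness for formulas in several object variables then follows by the usual induction on the number of variables, using the unary case at each step.

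\textbf{Exchange.} Suppose $a\in\acl(Cb)\setminus\acl(C)$; we need $b\in\acl(Ca)$. By compactness we may take $C$ finite and absorb it into the language. Choose a formula $\psi(x,y)$ with $\psi(a,b)$ true and $\psi(\Mm,b)$ finite of size at most $k$; restrict $\psi$ so that all of its $y$-fibers have size at most $k$. Assume for contradiction that $b\notin\acl(a)$. Then $\psi(a,\Mm)$ is infinite, and we may shrink it to an infinite $a$-definable convex set $J\ni b$ (Remark~\ref{ideal}) on which every element is non-algebraic over $a$; I would take $b$ generic in the appropriate sense to arrange this. For $y\in J$, let $g(y)$ be the position of $a$ in the increasing enumeration of $\psi(\Mm,y)$. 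Restricting $J$ to where $g(y)=g(b)$, we may assume $g$ is constant there. Define
\[
h(y)=\text{the $g(b)$-th element of } \psi(\Mm,y),
\]
a $\emptyset$-definable function on a $\emptyset$-definable convex set $I\ni b$. Then $h(b)=a$, and $h^{-1}(a)\supseteq J$ is infinite.

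The key claim is that $h$ is locally constant near $b$. Once that holds, Theorem~\ref{fin-image-1} says $h$ has finite image on a $\emptyset$-definable convex neighbourhood of $b$. Since $h(b)=a$, this puts $a\in\acl(\emptyset)$, which is the contradiction.

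\textbf{Main obstacle: local constancy.} Let $U$ be the definable set of points $y\in I$ where $h$ is locally constant. Because $h^{-1}(a)$ contains the infinite convex set $J$, $U$ is infinite, and by weak o-minimality it has finitely many convex components. I would argue that $b$ lies in the interior of $U$. If it did not, $b$ would sit in a $\emptyset$-definable finite set such as the boundary of $U$, or the endpoints of $U$ relative to the $a$-definable set $h^{-1}(a)$. Both cases should contradict $b\notin\acl(a)$, since $b\in J$ and every point of $J$ is non-algebraic over $a$.

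The delicate point is that we need a $\emptyset$-definable convex piece containing $b$ on which $h$ is locally constant. Taking the convex component of $U$ that contains $b$ requires $b\in U$ in the first place. That is exactly what I expect to be hardest, and I would first try to get it from $b\in\operatorname{int}(h^{-1}(a))$: since $b$ is generic in $J$, it is an interior point of $J$, hence of $h^{-1}(a)$, so $h$ is constant near $b$ and $b\in U$. Restricting $h$ to the convex component of $U$ containing $b$ then gives a definable convex set on which $h$ is locally constant, and Theorem~\ref{fin-image-1} applies to it.
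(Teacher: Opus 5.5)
Your proposal is correct and follows essentially the paper's route. Your $h$ plays the role of the paper's $C$-definable $f$ with $a=f(b)$ (which the paper gets directly from $\acl=\dcl$), $b\notin\acl(a)$ keeps $b$ off the finite $a$-definable boundary of $h^{-1}(a)$ so $h$ is locally constant at $b$, and Theorem~\ref{fin-image-1} on the convex component(s) of the $\emptyset$-definable locally-constant locus forces $a\in\acl(\emptyset)$. Your uniform-bound treatment of $\exists^\infty$ is also fine.

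One cleanup: you cannot ``choose $b$ generic'' or make every point of $J$ non-algebraic over $a$, but neither is needed. The hypothesis $b\notin\acl(a)$ alone puts $b$ in the interior of the $a$-definable convex set $J$, because any endpoint of $J$ lying in $\Mm$ is $a$-definable. The detour through the boundary of $U$ can be dropped.
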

\begin{proof}
  Since $T$ is weakly o-minimal and densely ordered, it clearly
  eliminates $\exists^\infty$: a definable set $D \subseteq \Mm$ is
  infinite iff it contains an open interval.  It remains to show that
  $\acl$ has the exchange property.  Suppose $a \in \acl(bC) \setminus
  \acl(C)$.  We will show that $b \in \acl(aC)$.  Since $\Mm$ is
  0-definably linearly ordered, $\acl$ and $\dcl$ agree, so $a \in
  \dcl(bC)$, meaning that $a = f(b)$ for some $C$-definable function
  $f$.  If $b$ is in the finite $C$-definable boundary of $f^{-1}(a)$,
  then $b \in \acl(aC)$ as desired.  Otherwise, $b$ is in the interior
  of $f^{-1}(a)$, meaning that $f$ is locally constant at $b$.  Let
  $U$ be the largest open subset of $\Mm$ on which $f$ is locally
  constant.  Then $U$ is $C$-definable and contains $b$.  By
  Theorem~\ref{fin-image-1} applied to the convex components of $U$,
  the image $f(U)$ is finite, so its member $a = f(b) \in f(U)$ is
  algebraic over $C$, a contradiction.
\end{proof}

\begin{acknowledgment}
  The author was supported by the National Natural Science Foundation
  of China (Grant No.\@ W2532009).  The author would like to thank the
  people who told him about the references \cite{miller,bkw,servi},
  including Jan Dobrowolski and many of the participants at the 2026
  Oberwolfach meeting ``Geometry and Combinatorics: the Model
  Theoretic Perspective'', where this work was presented.  Special
  thanks go to whoever pointed out \cite{bkw}; this significantly
  reduced the length of the paper.  Thanks also to Assaf Hasson for
  informing the author of his forthcoming work with Acosta, Halevi,
  and Peterzil, as well as the classic work of Wencel~\cite{wencel}.
  Lastly, the author would like to thank all four of Acosta,
  Halevi, Hasson, and Peterzil for providing the inspiration for the
  appendix and for reviewing earlier versions of the appendix.
\end{acknowledgment}

\bibliographystyle{plain} \bibliography{mybib}{}

\end{document}